\newtheorem{thm}{Theorem}[section]
\newtheorem{cor}[thm]{Corollary}
\newtheorem{conj}[thm]{Conjecture}
\newtheorem{lem}[thm]{Lemma}
\newtheorem{defn}[thm]{Definition}
\newtheorem{prop}[thm]{Proposition}
\newtheorem{rem}[thm]{Remark}
\newtheorem{example}[thm]{Example}
\theoremstyle{remark}
\newcommand{\la}{\lambda}
\newcommand{\<}{\langle}
\newcommand{\ra}{\rangle}
\newcommand{\rib}{\mathfrak{r}}
\begin{document}
 
\title[chromatic basis and $p$-positivity of skew Schur $Q$-functions]{An analogue of chromatic bases and\\ $p$-positivity of skew Schur $Q$-functions} 

\author{Soojin Cho}
\address{Department of Mathematics, Ajou University, Suwon 16499, Republic of Korea}
\email{chosj@ajou.ac.kr}
\thanks{The first author and the second author were supported by Basic Science Research Program through the National Research Foundation of Korea
(NRF) funded by the Ministry of Education (NRF-2015R1D1A1A01057476).}

\author{JiSun Huh}
\address{Applied Algebra and Optimization Research Center, Sungkyunkwan University, Suwon 16419, Republic of Korea}
\email{hyunyjia@g.skku.edu}

\author{Sun-Young Nam}
\address{Departement of Mathematics, Sogang University, Seoul 04107, Republic of Korea}
\email{synam.math@gmail.com}
\thanks{The third author was supported by Basic Science Research Program through the National Research Foundation of Korea(NRF) funded by the Ministry of Education(NRF-R1D1A1B03030945 and NRF-2019R1I1A1A01062658).}

\keywords{Chromatic symmetric function, chromatic basis, Schur $Q$-function, ribbon Schur $Q$-function, $p$-positivity}
\subjclass[2010]{}

\thanks{
}

\begin{abstract} 
We investigate chromatic symmetric functions in the relation to the algebra $\Gamma$ of symmetric functions generated by Schur $Q$-functions. We construct natural bases of $\Gamma$ in terms of chromatic symmetric functions. We also consider the $p$-positivity of skew Schur $Q$-functions and find a class of $p$-positive ribbon Schur $Q$-functions, making a conjecture that they are \emph{all}. We include many concrete computational results that support our conjecture. 
\end{abstract}

\maketitle

%=================================================================================
\section{Introduction}\label{sec:intro}

Stanley introduced \emph{chromatic symmetric functions} as a symmetric function generalization of chromatic polynomials of finite simple graphs in 1995 \cite{S1}. They provided many new directions of research in the relation with many different areas in mathematics, including graph theory, symmetric function theory, representation theory and algebraic geometry. 

It is natural to look at chromatic symmetric functions in the relation of other well known symmetric functions, and Stanley-Stembridge conjecture on $e$-positivity of certain chromatic symmetric functions is one of such problems. In \cite{CW1, CW2}, chromatic symmetric function bases of the symmetric function space $\Lambda$ were constructed and classical symmetric functions that can be realized as chromatic symmetric functions were classified. 

The purpose of current paper is to investigate chromatic symmetric functions in the relation with the algebra $\Gamma$ of symmetric functions generated by Schur $Q$-functions, answering the same questions raised in  \cite{CW1, CW2} for the algebra of symmetric functions. However, only null graphs (with isolated vertices) make symmetric functions in $\Gamma$, and we define \emph{near} chromatic symmetric functions and consider the generators and bases of $\Gamma$ instead, which make natural analogues of chromatic symmetric function bases of $\Lambda$. 
An interesting property of a chromatic symmetric function is that its image under the well known involution $\omega$ on $\Lambda$ is $p$-positive, and the classification of classical symmetric functions that are also chromatic symmetric functions was done by classifying $p$-positive skew Schur functions (see Proposition~\ref{prop:p-positive Schur}).
This suggests another interesting problem to ask which skew Schur $Q$-functions are $p$-positive, and we work on this also. We show that only ribbon Schur $Q$-functions can be $p$-positive and found a class of $p$-positive ribbons. We come to make a nice conjecture that the ones we found are \emph{all} with many computational evidences. The shapes of $p$-positive ribbons are obtained by repeating (near) concatenation of the transpose of a shape from a basic block, and by taking their transpose or antipodal rotation. See Conjecture~\ref{necessary}.

The rest of the current paper is organized as follows. Section~\ref{sec:diagrams} is to introduce  background for our work and Section~\ref{sec:basis} is devoted to the construction of chromatic bases of $\Gamma$. In Section~\ref{sec:p-positivity}, $p$-positivity of skew Schur $Q$-functions are considered, and in the final section we make final remarks.

%=================================================================================
\section{Preliminaries}\label{sec:diagrams}

In this section we give  basic definitions, setup notations and introduce known results that we will need for the development of our arguments.

A positive integer sequence $\alpha=(\alpha_1,\alpha_2,\dots,\alpha_{\ell})$ is called a \emph{composition $\alpha$ of $n$}, denoted $\alpha \vDash n$, if $\sum_{i}\alpha_i=n$. Given a composition $\alpha=(\alpha_1,\alpha_2,\dots,\alpha_{\ell})$ of $n$, we call the $\alpha_i$ the \emph{parts} of $\alpha$, $\ell=\ell(\alpha)$ the \emph{length} of $\alpha$, and $n=|\alpha|$ the \emph{size} of $\alpha$. We introduce a partial order on compositions which will be used. For two compositions $\alpha=(\alpha_1,\alpha_2,\dots,\alpha_{\ell(\alpha)})$ and $\beta=(\beta_1,\beta_2,\dots,\beta_{\ell(\beta)})$ such that $\alpha, \beta \vDash n$, we say that $\alpha$ is a \emph{coarsening} of $\beta$ (or $\beta$ is a \emph{refinement} of $\alpha$), denoted $\alpha \succcurlyeq \beta$, if consecutive parts of $\beta$ can be added together to yield the parts of $\alpha$. For example, $(5,2,3)\succcurlyeq(2,2,1,2,1,2)$.
  
A \emph{partition $\la=(\la_1, \la_2, \dots, \la_\ell)$ of $n$}, denoted $\la \vdash n$, is a composition of $n$ satisfying $\la_1\geq \la_2\geq \cdots \geq \la_{\ell}$, and is said to be \emph{strict} if $\la_1>\la_2>\dots>\la_{\ell}$. We also use $\la=(1^{m_1}2^{m_2}\cdots)$ as an alternative notation for partitions, where $m_i=m_i(\la)$ is the number of appearance of the part $i$ in the partition $\la$. Given two partitions $\la$ and $\mu$, we say that $\mu$ is \emph{contained} in $\la$, denoted $\mu \subseteq \la$, if $\mu_i\leq \la_i$ for all $i=1,2,\dots,\ell(\mu)$. We let $OP(n)$ be the set of all partitions of $n$ into odd parts
 and $SP(n)$ be the set of all strict partitions of $n$. The partition obtained by reordering the parts of a composition $\alpha$ is denoted by $\la(\alpha)$. 

%--------------------------------------------------------------------------------
\subsection{Diagrams}\label{subsec:diagrams}
The \emph{(Young) diagram} is a graphical interpretation of a partition $\la$, which is the array of left justified boxes containing $\la_i$ boxes in the $i$th row from the top. We abuse notation by denoting the diagram of a partition $\la$ by $\la$ also. For partitions $\la$ and $\mu$ such that $\mu\subseteq \la$, the \emph{skew diagram} $\la/\mu$ is obtained from the diagram $\la$ by removing the boxes of the diagram $\mu$ from the top left box.

 For a strict partition $\la$, the \emph{shifted diagram} of $\la$, denoted $\widetilde{\la}$, is defined by
\[
\widetilde{\la}=\{(i,j) \,|\, 1\leq i \leq \ell(\la),~ i\leq j \leq i+\la_i -1 \} ,
\]
that is obtained from the diagram $\la$ by shifting the $i$th row $(i-1)$ boxes to the right, for each $i>1$.
Similarly, for strict partitions $\la$ and $\mu$ such that $\mu \subseteq \la$, the {\it shifted skew diagram}, denoted $\widetilde{\la / \mu}$, is obtained from $\la/\mu$ by shifting the $i$th row from the top $(i-1)$ boxes to the right for $i>1$. 

We are dealing with diagrams either $\la/\mu$ or $\widetilde{\la / \mu}$, that are determined by two partitions $\mu\subseteq\la$, but we specify the partitions involved or the type of the diagram only when they are needed. We usually use $D$ to denote a diagram and understand it as a set of boxes(coordinates) in the plane. 
For a (shifted) skew diagram, each edgewise connected part is called a \emph{component}. A diagram with one component is called \emph{connected}. 
A skew diagram $D$ is said to be a {\it ribbon}, or a {\it border strip}, if for each box $(i,j)$ in $D$ the box $(i-1,j-1)$ is \emph{not} contained in $D$. {If a connected ribbon $D$ of size $n$ has $\alpha_i(D)$ boxes in the $i$th row for each $i=1, 2, \dots, \ell$, we correspond the ribbon to the composition $\alpha(D)=(\alpha_1(D),\alpha_2(D),\dots, \alpha_\ell(D))$ of $n$. We abuse the notation by denoting the ribbon by $\alpha(D)$ and we call $\ell(\alpha(D))$ the \emph{length of the ribbon $D$.}}

\begin{example}
If $\la=(4,2,2)$ and $\mu=(1,1)$, then the skew diagram $\la/\mu$ is the ribbon $\alpha=(3,1,2)$. If $\la=(4,3,2)$ and $\mu=(3,2)$, then the shifted skew diagram $\widetilde{\la/\mu}$ is the ribbon $\beta=(1,1,2)$.

\begin{center}
$\alpha=$
\small{
\begin{ytableau}
\none&~&~&~\\
\none&~\\
~&~
\end{ytableau}
\qquad \qquad
$\beta=$
\begin{ytableau}
\none&\none&\none&~\\
\none&\none&\none&~\\
\none&\none&~&~
\end{ytableau}}
\end{center}
\end{example}

%--------------------------------------------------------------------------------
\subsection{Operations on skew diagrams}\label{composition of transposition}

We begin with recalling two classical operations on skew diagrams. For a skew diagram $D$, the \emph{transpose} $D^t$ is obtained from $D$ by reflecting in the main diagonal, and the \emph{antipodal rotation} $D^{\circ}$ is obtained by rotating $D$ 180 degrees in the plane.  

Let $D_1$ and $D_2$ be skew diagrams. The \emph{disjoint union} of $D_1$ and $D_2$, denoted $D_1 \oplus D_2$, is obtained by placing $D_1$ strictly north and east of $D_2$ such that they have no common row or column. Given $D_1\oplus D_2$, the \emph{concatenation} $D_1 \cdot D_2$ (resp. \emph{near concatenation} $D_1 \odot D_2$) is obtained by moving all boxes of $D_1$ exactly one cell west (resp. south). For example, for ribbons $\alpha=(\alpha_1,\alpha_2,\dots,\alpha_k)$ and $\beta=(\beta_1,\beta_2,\dots,\beta_{\ell})$, we have $\alpha \cdot \beta=(\alpha_1,\dots,\alpha_k,\beta_1,\dots,\beta_{\ell})$ and $\alpha \odot \beta=(\alpha_1,\dots,\alpha_k+\beta_1,\dots,\beta_{\ell})$.

We note that both $\cdot$ and $\odot$ are associative and they associate with each other, and a ribbon $\alpha$ with $k$ boxes can be uniquely written as 
\[
\alpha=\square \bigstar_1 \square \bigstar_2 \cdots \bigstar_{k-1} \square ,  
\]
where $\square$ denotes the diagram with one box and each $\bigstar_i$ is either $\cdot$ or $\odot$. For example, the ribbon $\alpha=(3,1,2)$ can be written as $\square \odot \square \odot \square \cdot \square \cdot \square \odot \square$.\\
The main operation we deal with is the composition of transposition, which was introduced in \cite{BW}. For a composition $\alpha$ and a skew diagram $D$, the \emph{composition of transposition { of $D$ by $\alpha$}} is defined as  
\[
\alpha \bullet D=\begin{cases}
    D \bigstar_1 D^t \bigstar_2 D \bigstar_3 D^t \cdots \bigstar_{k-1} D & \text{if~} |\alpha| \text{~is odd}, \\
    D \bigstar_1 D^t \bigstar_2 D \bigstar_3 D^t \cdots \bigstar_{k-1} D^t & \text{if~} |\alpha| \text{~is even},
    \end{cases}
\]
where $\alpha=\square \bigstar_1 \square \bigstar_2 \cdots \bigstar_{k-1} \square$.

\begin{example}
If { $\alpha^{(1)}=(1,1)$, $\alpha^{(2)}=(2)$, and $D=(1,3)$, the composition of transposition $\alpha^{(1)} \bullet D$ of $D$ by $\alpha^{(1)}$ is the ribbon $(1,3,1,1,2)$, and $\alpha^{(2)} \bullet \alpha^{(1)} \bullet D$ }is the ribbon $(1,3,1,1,3,4,1,2)$ as shown in the following figure, in which gray boxes represent transpose:\\

\begin{center}
$D=
\small{
\begin{ytableau}
\none&\none&~\\
~&~&~\\
\none\\
\none\\
\none\\
\none\\
\none\\
\none
\end{ytableau}
}$\qquad\,\,
$\alpha^{(1)} \bullet D=
\small{
\begin{ytableau}
\none&\none&\none&~\\
\none&~&~&~\\
\none&*(gray!50)\\
\none&*(gray!50)\\
*(gray!50)&*(gray!50)
\end{ytableau}
}$\qquad\,\,
$\alpha^{(2)}\bullet\alpha^{(1)} \bullet D=
\small{
\begin{ytableau}
\none&\none&\none&\none&\none&\none&\none&\none&~\\
\none&\none&\none&\none&\none&\none&~&~&~\\
\none&\none&\none&\none&\none&\none&~\\
\none&\none&\none&\none&\none&\none&~\\
\none&\none&\none&\none&*(gray!50)&~&~\\
\none&*(gray!50)&*(gray!50)&*(gray!50)&*(gray!50)\\
\none&*(gray!50)\\
*(gray!50)&*(gray!50)
\end{ytableau}
}$
\end{center}
\end{example}

%---------------------------------------------------------------------------------
\subsection{Algebra $\Lambda$ of symmetric functions}

For a positive integer $n$ and a partition $\la=(\la_1, \dots, \la_\ell)$ of $n$, the \emph{monomial symmetric function} corresponding to $\la$ is defined as
$m_\la=\sum_{(i_1,\dots, i_\ell)} x_{i_1}^{\la_1}\cdots x_{i_\ell}^{\la_\ell}\,,$
where the sum is over the $\ell$ tuples of distinct positive integers. Monomial symmetric functions are invariant under the action of permuting variables and they form a basis of the \emph{space $\Lambda^n$ of $n$th degree symmetric functions}; $\Lambda^n=\mathrm{span}_\mathbb Q \{ m_\la\,| \, \la \vdash n\}.$
The \emph{algebra $\Lambda$ of symmetric functions} is the graded subalgebra of $\mathbb Q[[x_1, x_2, \dots]]$ defined as
$ \Lambda=\oplus_{n\geq 0} \,\, \Lambda^n, \text{ where we set }  \Lambda_0=\mathbb Q\,.$

For a positive integer $r$, the \emph{$r$th elementary symmetric function} is $e_r=\sum_{i_1<\cdots < i_r} x_{i_1} \cdots x_{i_r}$,  the \emph{$r$th complete homogeneous symmetric function} is $h_r=\sum_{i_1\leq\cdots \leq i_r} x_{i_1} \cdots x_{i_r}$, and the  \emph{$r$th power sum symmetric function} is $p_r=\sum_i x_i^r$. Then each of $\{e_\la\,| \, \la \vdash n\}$, $\{h_\la\,| \, \la \vdash n\}$ and $\{p_\la\,| \, \la \vdash n\}$ forms a nice basis of $\Lambda^n$, where $b_\la=b_{\la_1}\cdots b_{\la_\ell}$ for $\la=(\la_1, \dots, \la_\ell)$ and $b\in \{ e, h, p\}$. Another important basis of $\Lambda^n$ is the set $\{ s_\la\,|\, \la\vdash n\}$ of \emph{Schur functions}, where $s_\la=\mathrm{det}( h_{\la_i-i+j})_{1\leq i,j \leq \ell(\la)}$.

There is a nice combinatorial model for Schur functions. For a given partition $\la$, a \emph{semistandard tableau  $T$ of shape $\la$} is a filling of the diagram of $\la$ with positive integers so that each row is weakly increasing and each column is strictly increasing, and the \emph{content of $T$} is $c(T)=(c_1, c_2, \dots)$ where $c_i$ is the number of $i$ in $T$.  Then $s_\la=\sum_T x^{c(T)}$, where the sum is over all semistandard tableaux of shape $\la$ and $x^{(c_1, c_2, \dots)}=\prod_i x_i^{c_i}$. We can also define a \emph{skew Schur function $s_{\la/\mu}$} for $\mu\subseteq \la$ as the generating function of semistandard tableaux of shape $\la/\mu$.

We say that a given symmetric function is \emph{$b$-positive} if the function can be written as a positive linear combination of the elements of the basis $\{b_{\la}\}$ of $\Lambda$. We remark that in the algebra of symmetric functions $\Lambda$, there is a well known \emph{involution} $\omega$ that is defined by $\omega(p_r)=(-1)^{r-1}p_{r}$ or equivalently by  $\omega(s_\la)=s_{\la^t}$, and there is a \emph{scalar product} defined by $\< s_{\la},s_{\mu} \ra=\delta _{\la \mu}$ or equivalently by
$\< p_{\la},~p_{\mu} \ra=\delta _{\la \mu} z_{\la},$ where $z_{\la}=\prod_i i^{m_i}(m_i)!$ for  $\la=(1^{m_1}2^{m_2}\cdots)$. 

%---------------------------------------------------------------------------
\subsection{Subalgebra $\Gamma$ of symmetric functions}

Odd power sum symmetric functions make an algebraically independent set over $\mathbb Q$ and generate an important subalgebra of $\Lambda$. $$\Gamma=\mathbb Q [ p_1,\, p_3,\, p_5, \dots ] =\oplus_{n\geq 0} \Gamma^n\,,\text{ where } \Gamma^n=\Gamma\cap \Lambda^n\,.$$
The subalgebra $\Gamma$ of $\Lambda$ has nice generators and bases elements that are different from the power sum symmetric functions.  We now define the skew Schur $Q$-functions as generating functions of marked shifted tableaux as Schur functions are the generating functions of semistandard tableaux.

Let $\mathbf{P}'$ denote the set of ordered alphabet $\{1'<1<2'<2<\cdots \}$. Here, the letters $1', 2', \dots$ are said to be \emph{marked}. For a letter $a \in \mathbf{P}'$, we denote the unmarked version of this letter by $|a|$. Let $\la$ and $\mu$ be strict partitions such that $\mu\subseteq \la$. A \emph{marked shifted tableau} $T$ of shape $\widetilde{\la / \mu}$ is a filling of the boxes of the shifted skew diagram $\widetilde{\la / \mu}$ with letters from $\mathbf{P}'$ such that 
\begin{enumerate}
\item[(M1)] each row  and column are weakly increasing,
\item[(M2)] each column has at most one $k$ for each $k\geq 1$, and
\item[(M3)] each row has at most one $k'$ for each $k\geq 1$.
\end{enumerate}
The {\it content} of $T$ is the sequence $c(T)=(c_1,c_2, \dots)$, where $c_i$ is the number of all letters $a$ such that $|a|=i$ in $T$, for each $i\geq 1$.

\begin{defn}
For strict partitions $\la$ and $\mu$ with $\mu \subseteq \la$, the \emph{skew Schur $Q$-function} is defined as 
\[
Q_{\la / \mu} = \sum_{T} x^{c(T)} ,
\]
where the sum is over all marked shifted tableaux of shape $\widetilde{\la / \mu}$. 
We denote $Q_{\la / \emptyset}$ by $Q_{\la}$ and $Q_{(n)}$ by $q_n$,  
{where we let $Q_\emptyset=q_0=1$ for convenience.} For a partition $\la=(\la_1, \la_2, \dots, \la_\ell)$, we let $q_\la=\prod_{i=1}^\ell q_{\la_i}$.
\end{defn}

Our main interest is on the algebra $\Gamma$ and we summarize some important results.

\begin{prop} \cite[Chapter III]{Mac} \label{prop:SchurQ}
For a positive integer $n$, 
\begin{equation}\label{eqn:relation}
{\sum_{r=0}^n (-1)^r q_r q_{n-r} =0\,,}
\end{equation}
\begin{equation}\label{eqn:q_to_p}
q_n=\sum_{\la\in OP(n)}z_{\la}^{-1}2^{\ell(\la)}p_{\la}\,\in \Gamma\,.
\end{equation}
\end{prop}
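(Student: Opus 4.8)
The plan is to derive both identities from the single generating function
\[
Q(t):=\sum_{n\geq 0} q_n t^n=\prod_{i\geq 1}\frac{1+x_i t}{1-x_i t}\,,
\]
which I would first establish directly from the tableau definition of $q_n=Q_{(n)}$. A marked shifted tableau of shape $(n)$ is simply a weakly increasing word $a_1\leq\cdots\leq a_n$ in the alphabet $\mathbf{P}'$ in which each primed letter occurs at most once by (M3); the column conditions (M1),(M2) are vacuous for a single row. Grouping the letters of such a word by absolute value, the subword consisting of letters $a$ with $|a|=i$ is either $i\,i\cdots i$ (no prime) or $i'\,i\,i\cdots i$ (one leading $i'$), so its contribution to $\sum_n q_n t^n$ is
\[
1+\frac{x_i t}{1-x_i t}+\frac{x_i t}{1-x_i t}=\frac{1+x_i t}{1-x_i t}\,,
\]
where the summand $1$ records the empty block and the two equal terms record the nonempty blocks without and with a leading $i'$. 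Since letters of distinct absolute value are forced by weak increase to appear in increasing order of value, the word factors uniquely as a concatenation of such blocks, and multiplying these factors over all $i$ gives the product formula.

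For the relation \eqref{eqn:relation}, I would observe that replacing $t$ by $-t$ inverts each factor, so
\[
Q(t)\,Q(-t)=\prod_{i\geq 1}\frac{1+x_i t}{1-x_i t}\cdot\frac{1-x_i t}{1+x_i t}=1\,.
\]
Reading off the coefficient of $t^n$ for $n\geq 1$ gives $\sum_{r=0}^n (-1)^{n-r} q_r q_{n-r}=0$, and since $(-1)^{n-r}=(-1)^n(-1)^r$ one may factor out $(-1)^n$ to obtain exactly \eqref{eqn:relation}.

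For the power-sum expansion \eqref{eqn:q_to_p}, I would take logarithms. Using $\log(1+u)-\log(1-u)=\sum_{k\geq 1}\tfrac{1+(-1)^{k-1}}{k}u^k=\sum_{k\ \mathrm{odd}}\tfrac{2}{k}u^k$ with $u=x_i t$ and summing over $i$ yields
\[
\log Q(t)=\sum_{k\ \mathrm{odd}}\frac{2}{k}\,p_k\,t^k\,,\qquad\text{so}\qquad Q(t)=\prod_{k\ \mathrm{odd}}\exp\!\Big(\tfrac{2}{k}\,p_k\, t^k\Big)\,.
\]
Expanding each exponential as a power series in $p_k t^k$ and collecting the coefficient of $t^n$, the surviving monomials are indexed by partitions $\la=(1^{m_1}3^{m_3}\cdots)\in OP(n)$, with the coefficient of $p_\la$ equal to $\prod_k \tfrac{1}{m_k!}\big(\tfrac{2}{k}\big)^{m_k}=2^{\ell(\la)}z_\la^{-1}$, using $z_\la=\prod_i i^{m_i}(m_i)!$ and $\ell(\la)=\sum_k m_k$. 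This is precisely \eqref{eqn:q_to_p}, and since every part of $\la$ is odd the expression lies in $\Gamma$.

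The only genuinely combinatorial step is the first one, and even that reduces to the block decomposition of a weakly increasing word; the remaining two parts are formal manipulations of the generating function. The points I would be most careful about are the sign bookkeeping in the coefficient extraction for the relation and the identification of the constant $2^{\ell(\la)}z_\la^{-1}$ when expanding the exponential, where the factorials $m_k!$ from the exponential series must be matched correctly against the definition of $z_\la$.
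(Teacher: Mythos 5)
Your proof is correct. The paper itself gives no proof of this proposition---it is quoted from Macdonald, Chapter III---so the relevant comparison is with the standard argument there, and yours is essentially that argument: Macdonald introduces $q_n$ through the generating function $\prod_i (1+x_it)/(1-x_it)$, from which the relation (via $Q(t)Q(-t)=1$) and the power-sum expansion (via taking logarithms) follow exactly as you derive them. The one genuinely additional step in your write-up is the first one: since the paper defines $q_n=Q_{(n)}$ combinatorially through marked shifted tableaux, you must derive the product formula from the tableau definition, and your block decomposition of a weakly increasing word (empty block, or $i^m$, or $i'\,i^{m-1}$, giving $1+2x_it/(1-x_it)=(1+x_it)/(1-x_it)$ per letter value $i$) does this correctly; note it relies on the paper's definition placing no restriction on primed letters in the first box, which is consistent with $q_1=2p_1$. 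The remaining bookkeeping also checks out: extracting the coefficient of $t^n$ from $Q(t)Q(-t)=1$ gives $\sum_{r}(-1)^{n-r}q_rq_{n-r}=0$, which after multiplying by $(-1)^n$ is \eqref{eqn:relation}, and the identification $\prod_k \frac{1}{m_k!}\bigl(\tfrac{2}{k}\bigr)^{m_k}=2^{\ell(\lambda)}z_\lambda^{-1}$ matches the paper's convention $z_\lambda=\prod_i i^{m_i}(m_i)!$, yielding \eqref{eqn:q_to_p} together with membership in $\Gamma=\mathbb{Q}[p_1,p_3,p_5,\dots]$ since all parts of $\lambda$ are odd.
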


\begin{thm} \cite[Chapter III]{Mac} \label{thm:gamma}
\begin{enumerate}
\item[(a)] $\Gamma=\mathbb{Q}[ q_1,\, q_3,\, q_5, \dots ]$, 
and $\{q_1, q_3, q_5, \dots \}$ is an algebraically independent set over $\mathbb{Q}$. 
\item[(b)] For any strict partitions $\la$ and $\mu$ such that $\mu\subseteq \la$, $Q_{\la/\mu}$ is a symmetric function in $\Gamma$.
\item[(c)] For each $n\geq 1$, $\{Q_\la\,|\, \la\in SP(n)\}$ forms a basis of $\Gamma^n$.
\item[(d)] For each $n\geq 1$, both $\{p_\la\,|\, \la\in OP(n)\}$ and $\{q_\la\,|\, \la\in OP(n)\}$ are bases of $\Gamma^n$.
\end{enumerate}
\end{thm}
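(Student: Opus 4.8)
The plan is to prove the four parts in the logical order (a), (b), then (c) and (d) together, since the later parts rest on the earlier ones together with a dimension count.

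For part (a), I would first establish both inclusions between $\Gamma=\mathbb{Q}[p_1,p_3,\dots]$ and $\mathbb{Q}[q_1,q_3,\dots]$. The inclusion $\mathbb{Q}[q_1,q_3,\dots]\subseteq\Gamma$ is immediate from \eqref{eqn:q_to_p}, which exhibits each $q_n$ (regardless of the parity of $n$) as a rational combination of the $p_\la$ with $\la\in OP(n)$, all of which are products of odd power sums and hence lie in $\Gamma$. For the reverse inclusion I would read \eqref{eqn:q_to_p} as a triangular change of generators: for odd $n$ the partition $(n)$ belongs to $OP(n)$ and contributes the term $\tfrac{2}{n}\,p_n$ (since $z_{(n)}=n$ and $\ell((n))=1$), while every other $\la\in OP(n)$ has $\ell(\la)\ge 2$, so $p_\la$ is a product of strictly smaller odd power sums. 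Solving for $p_n$ by induction on odd $n$ (the base case being $p_1=\tfrac12 q_1$) expresses each $p_{2k-1}$ as a polynomial in $q_1,q_3,\dots$, giving $\Gamma\subseteq\mathbb{Q}[q_1,q_3,\dots]$. This same invertible, triangular change of generators transports the algebraic independence of $\{p_1,p_3,\dots\}$ to $\{q_1,q_3,\dots\}$. I note that \eqref{eqn:relation} is consistent with this picture: for even $n$ it lets one solve for $q_n$ in terms of lower $q_r$, confirming that the even-indexed $q_n$ are redundant as generators.

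For part (b) I would separate symmetry from membership, and I expect membership to be the main obstacle. That $Q_{\la/\mu}$ is symmetric follows from the standard involution on marked shifted tableaux: a Bender--Knuth-type switch on the letters in $\{i',\,i,\,(i{+}1)'\}$ interchanges the number of $i$'s and $(i{+}1)$'s while preserving (M1)--(M3), showing invariance under the transposition of $x_i$ and $x_{i+1}$, hence symmetry. The substantive point is that $Q_{\la/\mu}\in\Gamma$, which I would reduce to the fact that every $q_n\in\Gamma$. Using Schur's Pfaffian identity $Q_\la=\mathrm{Pf}\big(Q_{(\la_i,\la_j)}\big)$ (padding $\la$ with a zero part when $\ell(\la)$ is odd) together with the explicit expansion $Q_{(r,s)}=q_r q_s+2\sum_{i=1}^{s}(-1)^i q_{r+i}q_{s-i}$, each straight $Q_\la$ is a polynomial in the $q_n$ and so lies in $\Gamma$ by \eqref{eqn:q_to_p}; the skew case then follows by expanding $Q_{\la/\mu}=\sum_\nu f^{\la}_{\mu\nu}Q_\nu$ as a combination of straight $Q$-functions via the coproduct (the shifted Littlewood--Richardson expansion). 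Establishing these structural identities is where the real work lies.

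Finally, for parts (c) and (d) I would combine a dimension count with triangularity. By part (a) the monomials $p_\la$ with $\la\in OP(n)$ span $\Gamma^n$ and, being among the pairwise orthogonal $p_\la$, are linearly independent; hence they form a basis and $\dim\Gamma^n=|OP(n)|$. Euler's theorem gives $|OP(n)|=|SP(n)|$. Since part (b) places each $Q_\la$ with $\la\in SP(n)$ in $\Gamma^n$, it remains only to show these $|SP(n)|$ functions are linearly independent, for then the count forces them to be a basis; this I would extract from the unitriangular expansion $Q_\la=2^{\ell(\la)}m_\la+\sum_{\mu}(\ast)\,m_\mu$, the sum over $\mu$ strictly lower than $\la$ in dominance order, whose distinct leading monomials $m_\la$ for distinct strict $\la$ preclude any linear relation. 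This proves (c). For (d), the first basis is precisely the spanning set just used, and the second follows identically from part (a): since $q_1,q_3,\dots$ are algebraically independent generators of $\Gamma$, the degree-$n$ monomials in them, namely $\{q_\la:\la\in OP(n)\}$, are linearly independent and number $|OP(n)|=\dim\Gamma^n$, hence form a basis.
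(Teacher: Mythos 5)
The paper does not prove this theorem at all --- it is quoted verbatim from \cite[Chapter~III]{Mac} --- so there is no internal argument to compare yours against; your attempt has to stand on its own. Judged that way, parts (a), (c) and (d) are sound and follow the standard route: the triangular inversion of \eqref{eqn:q_to_p} (using that $(n)$ is the only partition in $OP(n)$ of length one, contributing $\tfrac{2}{n}p_n$) correctly yields $\Gamma=\mathbb{Q}[q_1,q_3,\dots]$ and transports algebraic independence, and the dimension count $\dim\Gamma^n=|OP(n)|=|SP(n)|$ combined with the unitriangular expansion of $Q_\la$ in the monomial basis is exactly how (c) and (d) are usually deduced.

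The genuine gap is in part (b), and it is twofold. First, all of the hard content is outsourced: the Bender--Knuth-type involution for marked shifted tableaux, Schur's Pfaffian identity, the two-row expansion of $Q_{(r,s)}$, and the shifted Littlewood--Richardson expansion are each substantial theorems --- in effect they \emph{are} the content of \cite[Chapter~III]{Mac} --- and none is proved, so citing them is tantamount to citing the theorem itself. Second, and more structurally, the expansion $Q_{\la/\mu}=\sum_\nu f^{\la}_{\mu\nu}Q_\nu$ that you invoke for the skew case is normally \emph{derived from} the basis property (c): the $f^{\la}_{\mu\nu}$ are defined as structure constants via $Q_\mu Q_\nu=\sum_\la f^{\la}_{\mu\nu}Q_\la$, and the skew identity then comes from duality with respect to the scalar product. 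Since your (c) rests on (b), the argument as written is circular. The repair is to reorder: prove membership for \emph{straight} shapes only (Pfaffian route), deduce (c) by your dimension count, and only then treat skew shapes, for instance via the alphabet-splitting identity
\begin{equation*}
Q_\la(x,y)=\sum_{\mu} Q_\mu(x)\,Q_{\la/\mu}(y),
\end{equation*}
which \emph{is} immediate from the tableau definition (the boxes carrying the smaller alphabet form a shifted subdiagram). Indeed, since $\Delta(p_{2k-1})=p_{2k-1}\otimes 1+1\otimes p_{2k-1}$ shows $\Gamma$ is closed under comultiplication, $Q_\la(x,y)$ lies in $\Gamma_x\otimes\Gamma_y$; expanding it along the basis $\{Q_\mu(x)\}$ of $\Gamma_x$ and invoking uniqueness of such expansions (the $Q_\mu(x)$ are linearly independent in $\Lambda_x$) forces each coefficient $Q_{\la/\mu}(y)$ to lie in $\Gamma_y$, which is the skew case of (b).
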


For two strict partitions $\la$ and $\mu$ such that $\mu\subseteq \la$, if $\widetilde{\la/\mu}$ is a ribbon then we follow the convention in \cite{BW} to use $\mathfrak r_{\alpha}$ for $Q_{\la/\mu}$ when $\alpha$ is the composition representing the ribbon of shape $\la/\mu$. In this case we also use $\alpha^t$ and  $\alpha^\circ$ for the transpose and the antipodal rotation of  $\widetilde{\la/\mu}$, respectively.
We associate an $\ell \times \ell$ matrix $A(\alpha)$ to a composition(ribbon) $\alpha=(\alpha_1, \dots, \alpha_\ell)$, where the $(i, j)$ entry of $A(\alpha)$ is given by
\begin{equation}\label{eqn:matrix} A(\alpha)_{i\,j}=\begin{cases}  { q_{\,\alpha_i+\alpha_{i+1}+\cdots + \alpha_j}} & \text{ if\,\, } i\leq j ,\\
                                     {q_0} & \text{ if\,\, } j-i=1 ,\\  0 & \text{ otherwise}\,.
               \end{cases}                                                                  
\end{equation}

\begin{prop}\label{prop:ribbon} \cite[Proposition 3.3]{BW}
Let $\alpha$ and $\beta$ be ribbons. Then,
\begin{equation}\label{eqn:circle}
\mathfrak r_{\alpha}=\mathfrak r_{\alpha^t}=\mathfrak r_{\alpha^\circ}
\end{equation}
\begin{equation}\label{eqn:det}
\mathfrak r_{\alpha}=\det(A(\alpha))                                                                            
\end{equation}
\begin{equation}\label{eqn:ribbon}
\mathfrak r_{\alpha}=(-1)^{\ell(\alpha)}\sum_{\gamma \succcurlyeq \alpha}(-1)^{\ell(\gamma)}q_{\la(\gamma)}
\end{equation}
\begin{equation}\label{eqn:ribbonop}
\mathfrak r_{\alpha} \mathfrak r_{\beta}=\mathfrak r_{\alpha \cdot \beta}+\mathfrak r_{\alpha \odot \beta}
\end{equation}
\begin{equation}\label{eqn:ribbonsq}
\mathfrak r_{\alpha}^{\, 2} = 2\mathfrak r_{\alpha \cdot \alpha^t} 
=  2\mathfrak r_{\alpha \odot \alpha^t}\, .
\end{equation}
\end{prop}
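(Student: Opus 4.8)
The plan is to take the determinantal formula \eqref{eqn:det} as the structural engine and to deduce the remaining four identities from it, reserving the quadratic $q$-relation \eqref{eqn:relation} for the one place it is genuinely needed. First I would establish \eqref{eqn:det} directly from the marked-shifted-tableau definition of $Q_{\la/\mu}$: realizing the entries $q_{\alpha_i+\cdots+\alpha_j}$ of $A(\alpha)$ as the weighted generating functions of single rows, and applying a shifted Lindstr\"om--Gessel--Viennot / Hamel--Goulden lattice-path argument to the ribbon shape, one identifies $\det A(\alpha)$ with the signed sum over families of nonintersecting paths, which collapses to the generating function of marked shifted tableaux of ribbon shape $\alpha$. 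I expect this step to be the principal technical obstacle, since it is where the combinatorics of $\mathbf{P}'$-fillings must be converted into the linear algebra of $A(\alpha)$. Expanding the near-triangular matrix $A(\alpha)$ along its $q_0=1$ subdiagonal then yields \eqref{eqn:ribbon}: each nonzero term of the permutation expansion corresponds to a way of grouping consecutive parts of $\alpha$, i.e.\ to a coarsening $\gamma\succcurlyeq\alpha$, contributing $q_{\la(\gamma)}$ with sign $(-1)^{\ell(\alpha)-\ell(\gamma)}$, and a routine sign check recovers the stated form.

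Granting \eqref{eqn:ribbon}, the antipodal identity $\mathfrak{r}_\alpha=\mathfrak{r}_{\alpha^\circ}$ in \eqref{eqn:circle} is immediate: for a ribbon, $\alpha^\circ$ is the reversed composition $(\alpha_\ell,\dots,\alpha_1)$, and reversal is a bijection on the coarsenings of $\alpha$ preserving both the length $\ell(\gamma)$ and the underlying partition $\la(\gamma)$, so the two signed sums agree term by term. The product rule \eqref{eqn:ribbonop} likewise follows: writing out $\mathfrak{r}_\alpha\mathfrak{r}_\beta$ via \eqref{eqn:ribbon} and using $q_{\la(\gamma)}q_{\la(\delta)}=q_{\la(\gamma\cdot\delta)}$ together with $\ell(\gamma)+\ell(\delta)=\ell(\gamma\cdot\delta)$, the product becomes the signed coarsening sum over those $\varepsilon\succcurlyeq\alpha\cdot\beta$ that do not merge the two parts straddling the junction of $\alpha$ and $\beta$. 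The coarsenings that do merge across the junction are exactly the coarsenings of $\alpha\odot\beta$, and comparing the prefactors $(-1)^{\ell(\alpha\cdot\beta)}$ and $(-1)^{\ell(\alpha\odot\beta)}$ splits $\mathfrak{r}_{\alpha\cdot\beta}$ as $\mathfrak{r}_\alpha\mathfrak{r}_\beta-\mathfrak{r}_{\alpha\odot\beta}$, which rearranges to \eqref{eqn:ribbonop}.

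The transpose identity $\mathfrak{r}_\alpha=\mathfrak{r}_{\alpha^t}$ in \eqref{eqn:circle} is the subtle point, and it is here that \eqref{eqn:relation} must enter: before reduction, $\mathfrak{r}_\alpha$ and $\mathfrak{r}_{\alpha^t}$ are distinct polynomials in the $q_m$ (indeed $A(\alpha)$ and $A(\alpha^t)$ have different sizes), and they agree only modulo \eqref{eqn:relation}, i.e.\ modulo $g(t)g(-t)=1$ for the generating function $g(t)=\sum_{m\ge0}q_m t^m$. I would first settle the one-row case $\mathfrak{r}_{(n)}=q_n=\mathfrak{r}_{(1^n)}$: since every composition of $n$ coarsens $(1^n)$, formula \eqref{eqn:ribbon} gives $\mathfrak{r}_{(1^n)}=\sum_{\gamma\vDash n}(-1)^{n-\ell(\gamma)}q_{\la(\gamma)}$, and summing the corresponding series over $n$ factors as $\bigl(1-(1-g(-t))\bigr)^{-1}=g(-t)^{-1}=g(t)$, so $\mathfrak{r}_{(1^n)}=q_n$. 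The general case then follows by bootstrapping from this base, either by performing row and column operations on $A(\alpha)$ that invoke \eqref{eqn:relation} to transform it into $A(\alpha^t)$, or by inducting on $\ell(\alpha)$ through the product rule \eqref{eqn:ribbonop}, reducing a general ribbon to concatenations and near-concatenations of single rows whose transposes are already understood.

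Finally, \eqref{eqn:ribbonsq} drops out by combining the product rule with the transpose symmetry. Taking $\beta=\alpha^t$ in \eqref{eqn:ribbonop} and using $\mathfrak{r}_{\alpha^t}=\mathfrak{r}_\alpha$ gives $\mathfrak{r}_\alpha^{\,2}=\mathfrak{r}_{\alpha\cdot\alpha^t}+\mathfrak{r}_{\alpha\odot\alpha^t}$. Since transpose reverses a ribbon word and interchanges $\cdot$ with $\odot$, one has $(\alpha\cdot\alpha^t)^t=\alpha\odot\alpha^t$, so \eqref{eqn:circle} forces $\mathfrak{r}_{\alpha\cdot\alpha^t}=\mathfrak{r}_{\alpha\odot\alpha^t}$; hence each equals $\tfrac12\mathfrak{r}_\alpha^{\,2}$, which is exactly \eqref{eqn:ribbonsq}. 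In summary, the antipodal part of \eqref{eqn:circle}, the product rule \eqref{eqn:ribbonop}, and the square rule \eqref{eqn:ribbonsq} are formal consequences of \eqref{eqn:det} and \eqref{eqn:ribbon}, while the two genuine difficulties are establishing the determinant \eqref{eqn:det} from the tableau definition and proving the transpose symmetry, the only step that essentially uses the quadratic relation \eqref{eqn:relation}.
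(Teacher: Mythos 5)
The paper offers no proof of Proposition~\ref{prop:ribbon} to compare against: it is imported verbatim from \cite[Proposition 3.3]{BW}, so your attempt can only be measured against the standard development in the literature. Within that frame, your deductive architecture is correct, and I checked the individual steps: the Hessenberg expansion of $\det(A(\alpha))$ along the $q_0$ subdiagonal does yield \eqref{eqn:ribbon} with sign $(-1)^{\ell(\alpha)-\ell(\gamma)}$ (note the paper's definition \eqref{eqn:matrix} has a typo --- the $q_0$ entries sit at $i-j=1$, the reading you silently adopted); reversal of compositions is a length- and partition-preserving bijection on coarsenings, giving $\mathfrak r_\alpha=\mathfrak r_{\alpha^\circ}$; the straddling/non-straddling split of coarsenings of $\alpha\cdot\beta$ gives \eqref{eqn:ribbonop}, the sign working out because $\ell(\alpha\odot\beta)=\ell(\alpha)+\ell(\beta)-1$; the computation $g(t)g(-t)=1$ from \eqref{eqn:relation} correctly yields $\mathfrak r_{(1^n)}=q_n$; and $(\alpha\cdot\alpha^t)^t=\alpha\odot\alpha^t$ plus \eqref{eqn:ribbonop} gives \eqref{eqn:ribbonsq}. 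Your structural observation that \eqref{eqn:relation} is needed only for transpose invariance is also right. One repair is needed in the transpose bootstrap: induction on $\ell(\alpha)$ alone does not close, because writing $\alpha=\beta\cdot(\alpha_{\ell})$ reduces $\mathfrak r_{\alpha^t}$ both to $\mathfrak r_{\beta^t}$ (fewer boxes, arbitrary length) and to $\mathfrak r_{(\beta\odot(\alpha_\ell))^t}$ (same number of boxes, length $\ell-1$); you need a lexicographic induction on (size, length), with the one-row case as base. This is a formality, but as stated the induction is incomplete.

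The genuine soft spot is your foundation stone: \eqref{eqn:det} proved ``directly'' by a shifted Lindstr\"om--Gessel--Viennot/Hamel--Goulden argument, which you acknowledge as the principal obstacle but do not carry out. Such machinery does exist (Hamel's determinants for outside decompositions of shifted shapes, Stembridge's nonintersecting-path/Pfaffian methods), but adapting the sign-reversing involution to the marked alphabet $\mathbf{P}'$ is a theorem-level input, not a routine exercise, so as written your plan rests on its hardest and least-detailed step. The standard route --- essentially the one behind \cite{BW} --- inverts your logical order and avoids this entirely: first prove \eqref{eqn:ribbonop} by a direct bijection on marked shifted tableaux (a short case analysis shows that for any pair of entries at the junction of $\alpha$ and $\beta$, exactly one of the vertical-domino condition (M1)--(M2) and the horizontal-domino condition (M1), (M3) is satisfied, so tableaux of shape $\alpha\oplus\beta$ biject with those of $\alpha\cdot\beta$ together with those of $\alpha\odot\beta$); then iterate \eqref{eqn:ribbonop} to get $q_{\la(\alpha)}=\sum_{\gamma\succcurlyeq\alpha}\mathfrak r_\gamma$ and obtain \eqref{eqn:ribbon} by M\"obius inversion over the Boolean lattice of coarsenings; finally \eqref{eqn:det} becomes a corollary of the same Hessenberg expansion you use, rather than the foundation. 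With that reversal the heavy lattice-path step disappears, and the rest of your argument (rotation, transpose via \eqref{eqn:relation}, the square rule) goes through verbatim.
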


%==================================================================================
\section{A basis of the algebra $\Gamma$ involving graphs}\label{sec:basis}

In this section we construct new bases for the algebra $\Gamma$, whose elements are generated by the chromatic symmetric functions of the star graphs and the triangles.
These are analogues of the chromatic symmetric function bases for the algebra $\Lambda$ introduced in \cite{CW1}.

We begin by defining the chromatic symmetric functions.\\
Let $G=(V,E)$ be a simple graph with a vertex set $V=\{v_1, v_2, \dots, v_n\}$ and an edge set $E$. A function $\kappa : V \rightarrow \{1,2,\dots\}$ is called a \emph{proper coloring} of $G$ if $\{v_i,v_j\} \in E$ implies that $\kappa(v_i) \neq \kappa(v_j)$. The \emph{chromatic symmetric function of $G$} is defined as
\[
X_G =\sum_{\kappa} x_{\kappa (v_1)}\dots x_{\kappa (v_n)},
\]
where the sum is over all proper colorings of $G$. 
It is an immediate consequence of the definition of $X_G$ that if a graph $G$ is a disjoint union of subgraphs $G_1, G_2,\dots, G_{\ell}$, then we have $X_G=\prod_{i=1}^{\ell}X_{G_i}$. Moreover, $X_{G}$ is a symmetric function and one can expand $X_G$ in terms of the basic bases of symmetric functions. 
As we consider the algebra $\Gamma=\mathbb Q [p_1, p_3, p_5, \dots]$ in this paper, we are especially interested in the expansion of chromatic symmetric functions into power sum symmetric functions $p_{\la}$ among other basic bases. Stanley found a nice explicit $p$-expansion formula of the chromatic symmetric functions. 

\begin{thm}\label{thm:chro} \cite[Theorem 2.5, Corollary 2.7]{S1} 
For a simple graph $G=(V,E)$ with $n$ vertices, let $\la (S)$ be the partition of $n$ whose parts are equal to the numbers of vertices in the connected components of the spanning subgraph of $G$ with an edge set $S$. Then, the chromatic symmetric function of $G$ can be written as
\[
X_G=\sum_{S\subseteq E} (-1)^{|S|}p_{\la (S)},
\]
and the symmetric function $\omega(X_G)$ is $p$-positive.
\end{thm}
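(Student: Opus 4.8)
The plan is to prove the two claims in sequence. For the $p$-expansion formula, I would use an inclusion--exclusion argument applied directly to the definition of $X_G$. The idea is that a proper coloring is one in which no monochromatic edge appears, so I would start from the monomial expansion $X_G=\sum_\kappa x^{\kappa}$ restricted to proper colorings and rewrite it by summing over \emph{all} colorings while inserting an indicator that each edge is bichromatic. Concretely, I would expand the product $\prod_{\{v_i,v_j\}\in E}\bigl(1-[\kappa(v_i)=\kappa(v_j)]\bigr)$ to obtain $X_G=\sum_{\kappa}\sum_{S\subseteq E}(-1)^{|S|}\prod_{\{v_i,v_j\}\in S}[\kappa(v_i)=\kappa(v_j)]\,x^{\kappa}$. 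Interchanging the two sums gives $X_G=\sum_{S\subseteq E}(-1)^{|S|}\sum_{\kappa}\prod_{\{v_i,v_j\}\in S}[\kappa(v_i)=\kappa(v_j)]\,x^{\kappa}$, where the inner sum now ranges over all colorings, with no properness constraint.

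The key step is then to identify the inner sum. Fixing $S\subseteq E$, the constraint $\prod_{\{v_i,v_j\}\in S}[\kappa(v_i)=\kappa(v_j)]$ forces $\kappa$ to be constant on each connected component of the spanning subgraph $(V,S)$. If the component sizes are the parts of the partition $\la(S)$, then each component of size $\la(S)_k$ contributes a factor $\sum_{i\geq 1} x_i^{\la(S)_k}=p_{\la(S)_k}$ to the generating function, since we freely choose one color per component and that color is raised to the power equal to the number of vertices in the component. Multiplying over all components yields $\sum_{\kappa}\prod_{\{v_i,v_j\}\in S}[\kappa(v_i)=\kappa(v_j)]\,x^{\kappa}=p_{\la(S)}$. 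Substituting this back gives $X_G=\sum_{S\subseteq E}(-1)^{|S|}p_{\la(S)}$, as claimed.

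For the $p$-positivity of $\omega(X_G)$, I would apply $\omega$ termwise to the formula just derived. Since $\omega(p_r)=(-1)^{r-1}p_r$ and $\omega$ is multiplicative, $\omega(p_{\la(S)})=(-1)^{|S|}\,\varepsilon(S)\,p_{\la(S)}$ for a suitable sign $\varepsilon(S)=(-1)^{n-\ell(\la(S))}$, where $\ell(\la(S))$ is the number of connected components of $(V,S)$. The main point is a parity bookkeeping: for a spanning subgraph with $n$ vertices, $c$ connected components, and $|S|$ edges, one has $|S|\geq n-c$, and the quantity $|S|-(n-c)$ records how many edges are \emph{redundant} in the sense of forming cycles. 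I expect the sign of each coefficient to reduce to $(-1)^{|S|-(n-\ell(\la(S)))}$, which is nonnegative in effect once we group terms by their underlying partition; the cleanest route is to show that after collecting all $S$ producing a given $\la$, the signs align, so that every coefficient in the $p$-expansion of $\omega(X_G)$ is a nonnegative integer.

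The hard part will be the sign reconciliation in the last step: verifying that, when $S$ ranges over edge sets whose components realize a fixed partition $\la$, the combined sign $(-1)^{|S|}\cdot(-1)^{n-\ell(\la)}$ is consistently $(-1)^{|S|-(n-\ell(\la))}=(-1)^{(\text{number of independent cycles in }S)}$ and that the resulting coefficients do not cancel into negatives. Rather than track individual cancellations, I would argue structurally: the coefficient of $p_\la$ in $\omega(X_G)$ counts (with the correct sign absorbed) a set of edge subsets that can be organized so the alternating sum is manifestly a sum of nonnegative contributions, for instance by an acyclic-orientation or broken-circuit interpretation à la Stanley. This combinatorial reinterpretation is the crux, and I would lean on the established identity relating $\omega(X_G)$ to sink-counting over acyclic orientations to finish the positivity cleanly.
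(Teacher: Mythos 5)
The paper itself offers no proof of this statement---it is quoted verbatim from Stanley \cite[Theorem 2.5, Corollary 2.7]{S1}---so your attempt has to be judged on its own merits. Your first half is correct and is exactly Stanley's original argument: expand $\prod_{\{v_i,v_j\}\in E}\bigl(1-[\kappa(v_i)=\kappa(v_j)]\bigr)$, interchange the sums, and recognize the inner sum as $p_{\lambda(S)}$ because a coloring constant on every edge of $S$ is precisely a free choice of one color per connected component of $(V,S)$. That part stands as written.

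The second half has a genuine gap. A small slip first: $\omega(p_{\lambda(S)})=(-1)^{n-\ell(\lambda(S))}p_{\lambda(S)}$; the factor $(-1)^{|S|}$ you attached to $\omega(p_{\lambda(S)})$ does not belong there (it is the coefficient from the expansion, not part of the action of $\omega$). More seriously, the combined sign $(-1)^{|S|-(n-\ell(\lambda(S)))}$ equals $(-1)^{\text{cycle rank of }S}$, which really is $-1$ for many $S$, so the terms of $\omega(X_G)$ do \emph{not} ``align'': for $G=C_3$, the subset $S=E$ contributes $-p_3$ while the three $2$-edge subsets each contribute $+p_3$, and the coefficient $2$ arises only after cancellation. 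Thus nonnegativity after grouping is precisely the content of the theorem, and you never establish it; instead you defer to ``the established identity relating $\omega(X_G)$ to sink-counting over acyclic orientations,'' which is (i) Stanley's theorem about the $e$-expansion, not the $p$-expansion, and (ii) at least as deep as the claim you are proving, so invoking it is circular in spirit. What is missing is one of the two standard closing arguments: either group the subsets $S$ by the partition $\pi$ of $V$ into connected components, prove $\sum_{S:\,\pi(S)=\pi}(-1)^{|S|}=\mu_{L_G}(\hat{0},\pi)$ by M\"obius inversion on the bond lattice $L_G$, and apply Rota's sign theorem for geometric lattices, which gives $(-1)^{n-\ell(\lambda(\pi))}\mu(\hat{0},\pi)>0$; or invoke Whitney's broken-circuit theorem to restrict the sum to NBC sets $S$, which are forests, so that $|S|=n-\ell(\lambda(S))$ and every surviving term of $\omega(X_G)$ has sign $+1$. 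Either route closes the gap in a few lines; without one of them the positivity claim is asserted, not proved.
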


\begin{example}\label{exam:chro}\cite[Theorem 8]{CW1}
We give the chromatic symmetric functions $X_G$ of two exemplary graphs $G$.
\begin{enumerate}
    \item[(a)] For the \emph{triangle} $C_3$, the cycle with three vertices, we have
         \[X_{C_3}=p_1^3-3p_2p_1+2p_3\,.\]
    \item[(b)] For the \emph{star graph} $S_n$, the tree with one internal vertex and $n-1$ leaves, we have
         \[
         X_{S_n}=\sum_{r=0}^{n-1}(-1)^r \binom{n-1}{r}p_{r+1}p_{1}^{n-r-1}\,.
         \]
\end{enumerate}
\end{example}

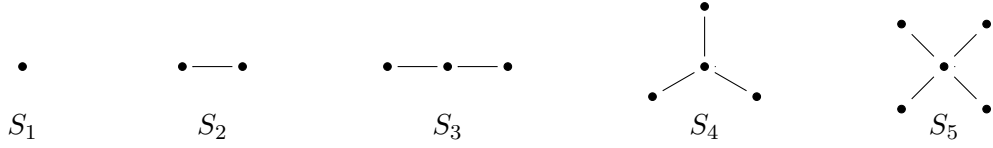
\begin{figure}
\centering
\begin{tikzpicture}[scale=0.8]
\node at (0,1) {};
\filldraw[black] (0,0) circle  (1.8pt);
\node at (0,-1) {$S_1$};
\end{tikzpicture}
\qquad\qquad
\begin{tikzpicture}[scale=0.8]
\node at (0,1) {};
\node  (0) at (0,0) {};
\node  (1) at (1,0) {};

\foreach \i in {0,1}
{
\draw (0) -- (\i);
\filldraw[black] (\i) circle  (1.8pt);
}
\node at (0.5,-1) {$S_2$};
\end{tikzpicture}
\qquad\qquad
\begin{tikzpicture}[scale=0.8]
\node at (0,1) {};
\node  (0) at (0,0) {};
\node  (1) at (1,0) {};
\node  (2) at (-1,0) {};

\foreach \i in {0,1,2}
{
\draw (0) -- (\i);
\filldraw[black] (\i) circle  (1.8pt);
}
\node at (0,-1) {$S_3$};
\end{tikzpicture}
\qquad\qquad
\begin{tikzpicture}[scale=0.8]
\node at (0,1) {};
\node  (0) at (0,0) {};
\node  (1) at ({cos(330)},{sin(330)}) {};
\node  (2) at ({cos(90)},{sin(90)}) {};
\node  (3) at ({cos(210)},{sin(210)}) {};

\foreach \i in {0,1,2,3}
{
\draw (0) -- (\i);
\filldraw[black] (\i) circle  (1.8pt);
}
\node at (0,-1) {$S_4$};
\end{tikzpicture}
\qquad \qquad
\begin{tikzpicture}[scale=0.8]
\node at (0,1) {};
\node  (0) at (0,0) {};
\node  (1) at ({cos(45)},{sin(45)}) {};
\node  (2) at ({cos(135)},{sin(135)}) {};
\node  (3) at ({cos(225)},{sin(225)}) {};
\node  (4) at ({cos(315)},{sin(315)}) {};

\foreach \i in {0,1,2,3,4}
{
\draw (0) -- (\i);
\filldraw[black] (\i) circle  (1.8pt);
}
\node at (0,-1) {$S_5$};
\end{tikzpicture}
    \caption{Star graphs.}  \label{fig:star}
\end{figure}

In \cite{CW1}, the authors obtained numerous new bases for the algebra of symmetric functions whose generators are chromatic symmetric functions.

\begin{thm}\label{thm:chrobase} \cite[Theorem 5]{CW1} Let $G_{\la}=G_{\la_1} \cup \cdots \cup G_{\la_{\ell}}$ denote a graph with connected components $G_{\la_1},\dots, G_{\la_{\ell}}$ such that $G_k$ has $k$ vertices for each $k$. Then the set
\[
\{X_{G_{\la}}\,|\,\la \vdash n\}
\]
forms a $\mathbb{Q}$-basis of $\Lambda^n$.
\end{thm}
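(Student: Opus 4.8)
The plan is to show that the transition matrix expressing the $p(n)$ functions $\{X_{G_\lambda} : \lambda \vdash n\}$ in the power sum basis $\{p_\mu : \mu \vdash n\}$ is triangular with nonzero diagonal, with respect to a suitable refinement order on partitions. Since $|\{X_{G_\lambda}\}| = p(n) = \dim_{\mathbb Q}\Lambda^n$, invertibility of this matrix immediately yields that $\{X_{G_\lambda}\}$ is a basis.

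First I would set up a refinement order on partitions of $n$: write $\nu \trianglelefteq \mu$ if $\mu$ is obtained from $\nu$ by adding together some (not necessarily consecutive) of its parts, so that the finest partition is $1^n$ and the coarsest is $(n)$. This order is compatible with the operation $\sqcup$ of collecting all parts of a list of partitions into a single partition: if $\nu^{(i)} \trianglelefteq \rho^{(i)}$ for all $i$, then $\bigsqcup_i \nu^{(i)} \trianglelefteq \bigsqcup_i \rho^{(i)}$. Because $X_G$ is multiplicative over disjoint unions, $X_{G_\lambda} = \prod_{i=1}^{\ell} X_{G_{\lambda_i}}$, and by Theorem~\ref{thm:chro} each factor expands as $X_{G_{\lambda_i}} = \sum_{S \subseteq E(G_{\lambda_i})} (-1)^{|S|} p_{\lambda(S)}$. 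Every partition $\lambda(S)$ occurring here is a refinement of the single-part partition $(\lambda_i)$, since its parts are the sizes of the connected components of a spanning subgraph of the $\lambda_i$-vertex graph $G_{\lambda_i}$. Multiplying out and using compatibility with $\sqcup$, every $p_\mu$ appearing in $X_{G_\lambda}$ has $\mu \trianglelefteq \lambda$, so that
\[
X_{G_\lambda} = c_\lambda\, p_\lambda + \sum_{\mu \vartriangleleft \lambda} d_{\lambda\mu}\, p_\mu .
\]

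Next I would identify the diagonal coefficient $c_\lambda$ and show it is nonzero. To obtain $p_\lambda$ from the product, one must select from each factor of size $\lambda_i$ a term $p_{\lambda(S)}$ so that all selected parts combine across factors to reproduce $\lambda$; comparing largest parts and inducting shows the only possibility is to take the single-part term $(\lambda_i)$ from each factor. Hence $c_\lambda = \prod_{i} N(G_{\lambda_i})$, where $N(G_k)$ is the coefficient of $p_{(k)}$ in $X_{G_k}$. It remains to see $N(G_k)\neq 0$ for each connected $G_k$. Here I would apply the specialization $p_r \mapsto t$ (for all $r$) to Stanley's formula, under which $X_{G_k}$ becomes the chromatic polynomial $\chi_{G_k}(t) = \sum_{S}(-1)^{|S|} t^{\,k(S)}$, where $k(S)$ is the number of components of the spanning subgraph with edge set $S$. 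Since $(k)$ is the unique length-one partition of $k$, $N(G_k)$ equals the coefficient of $t^1$ in $\chi_{G_k}(t)$; as $G_k$ is connected, $\chi_{G_k}(t)$ is divisible by $t$ but not by $t^2$, so this coefficient is nonzero and thus $c_\lambda \neq 0$.

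Finally, choosing any linear extension of the refinement order turns the displayed expansion into a genuinely triangular system with nonzero diagonal, so the transition matrix is invertible and $\{X_{G_\lambda}:\lambda\vdash n\}$ is a $\mathbb Q$-basis of $\Lambda^n$. The main obstacle is the nonvanishing of the diagonal coefficient $c_\lambda$: triangularity alone only shows the coarsest term is $p_\lambda$, and one genuinely needs the connectedness of each $G_k$ to force $N(G_k)\neq 0$. The $\omega$-positivity half of Theorem~\ref{thm:chro} pins down only the \emph{sign} of $N(G_k)$ (it is $(-1)^{k-1}$ times a nonnegative integer), which is why I would bring in the chromatic-polynomial specialization to finish.
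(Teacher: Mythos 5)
The paper gives no proof of this statement---it is quoted verbatim from \cite[Theorem 5]{CW1}---so the comparison must be with the proof in that reference, and your argument is correct and takes essentially the same approach: expand $X_{G_\la}=\prod_i X_{G_{\la_i}}$ in the power sum basis via Stanley's formula, observe that the expansion is triangular with respect to the refinement order on partitions (with the diagonal coefficient factoring as $\prod_i N(G_{\la_i})$), and reduce everything to the nonvanishing of the coefficient of $p_{(k)}$ in $X_{G_k}$ for each connected $G_k$. Your justification of that last step, specializing $p_r\mapsto t$ so that $X_{G_k}$ becomes the chromatic polynomial, which has a simple zero at $t=0$ precisely because $G_k$ is connected, is sound and standard, differing at most cosmetically from the justification in \cite{CW1} (where the same nonvanishing can be read off from Stanley's bond-lattice expansion, the leading coefficient being the M\"obius function value $\mu(\hat 0,\hat 1)$ of a geometric lattice, nonzero by Rota's theorem).
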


It is natural to ask that for which graphs $G$, a set $\{X_{G}\}$ forms a basis of $\Gamma^n$. However, it follows from Theorem~\ref{thm:chro} that $X_G$ dose not belongs to $\Gamma$ if $G$ has an edge. 

\begin{prop} \label{prop:noedge} 
For a finite simple graph $G$, it has no edge if and only if $X_G \in \Gamma$. 
\end{prop}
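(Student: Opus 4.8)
The plan is to combine Stanley's power-sum expansion (Theorem~\ref{thm:chro}) with the description of $\Gamma^n$ in Theorem~\ref{thm:gamma}(d). By the latter, $\{p_\lambda \mid \lambda \in OP(n)\}$ is a basis of $\Gamma^n$, whereas $\{p_\lambda \mid \lambda \vdash n\}$ is a basis of $\Lambda^n$; hence a homogeneous $f \in \Lambda^n$ lies in $\Gamma$ if and only if every partition occurring in its $p$-expansion has all parts odd. This reduces the proposition to a question about which $p_\lambda$ appear in $X_G$, and I would settle the two directions by reading off the coefficients that Theorem~\ref{thm:chro} produces.

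For the forward implication, suppose $G$ has $n$ vertices and no edges. Then the only subset of the (empty) edge set is $S=\emptyset$, whose spanning subgraph is totally disconnected, so $\lambda(\emptyset)=(1^n)$. Theorem~\ref{thm:chro} then gives
\[
X_G=(-1)^0\,p_{(1^n)}=p_1^{\,n},
\]
which lies in $\Gamma=\mathbb Q[p_1,p_3,\dots]$ since $1$ is odd. This disposes of the edgeless case immediately.

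For the converse I would argue by contrapositive: assuming $G$ has at least one edge, I will exhibit a partition with an even part that survives with nonzero coefficient in the $p$-expansion of $X_G$. The natural candidate is $\mu=(2,1^{n-2})$. By Theorem~\ref{thm:chro} the coefficient of $p_\mu$ equals $\sum_{S:\lambda(S)=\mu}(-1)^{|S|}$, so the crux is to identify exactly which $S\subseteq E$ give $\lambda(S)=\mu$. A spanning subgraph has component-size multiset $\{2,1,\dots,1\}$ precisely when it has a single connected component of size $2$ with all other vertices isolated; in a simple graph a connected two-vertex component is a single edge, and any extra edge in $S$ would either enlarge that component or create a second nontrivial component. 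Thus the subsets with $\lambda(S)=\mu$ are exactly the singletons $\{e\}$ with $e\in E$, each contributing $(-1)^1$. The coefficient of $p_{(2,1^{n-2})}$ in $X_G$ is therefore $-|E|\neq 0$, and since $\mu$ has an even part, $X_G\notin\Gamma$.

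The one place demanding genuine care is this last coefficient computation, and it is where I expect the main obstacle to lie: Stanley's formula sums over \emph{all} edge subsets, so a careless choice of target partition could be spoiled by cancellation among the signs $(-1)^{|S|}$. Choosing $\mu=(2,1^{n-2})$ sidesteps this, because its only preimages under $S\mapsto\lambda(S)$ are the single edges, all of the same cardinality $|S|=1$; no cancellation can occur, and nonvanishing of the coefficient follows at once from $|E|\geq 1$.
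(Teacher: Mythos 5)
Your proof is correct and follows essentially the same route as the paper's: both use Stanley's expansion to read off the coefficient of $p_{(2,1^{n-2})}$ in $X_G$, show it equals $-|E|$, and conclude from the odd-power-sum description of $\Gamma$ that this forces $|E|=0$. Your version is in fact slightly more careful than the paper's, since you explicitly verify that no edge subset with $|S|\geq 2$ can produce the partition $(2,1^{n-2})$, a cancellation issue the paper's proof passes over implicitly.
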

\begin{proof}
Let $G$ be a simple graph with $n$ vertices. If $G$ has no edge, then $X_G=p_1^n \in \Gamma$.

We now let $G$ be a graph having $m$ edges such that $X_G \in \Gamma$. By Theorem~\ref{thm:chro}, we have 
\[
X_G=p_1^n -m\, p_2 p_1^{n-2} +\sum_{|S|\geq 2}(-1)^{|S|}p_{\la(S)}\,.
\]
Since $p_n$, $n\geq 0$,  are algebraically independent and $\{p_1,p_3,p_5,\dots\}$ generates $\Gamma$, for $X_G$ to be in $\Gamma$ the second term $m\,p_2p_1^{n-2}$ must be $0$, that is $m=0$. Hence, $G$ has no edge.
\end{proof}

A graph with no edge is called a \emph{null graph}.
Due to Proposition~\ref{prop:noedge}, the only chromatic symmetric functions contained in $\Gamma$ are $X_G=p_1^n$ for the null graphs $G$ with $n$ vertices and there can not be a chromatic symmetric function basis for $\Gamma$. A symmetric function $f$ in $\Gamma$ satisfies $\omega(f)=f$ and we consider the \emph{symmetrization} of the chromatic symmetric functions instead.  
For a simple graph $G$, we define a \emph{near chromatic symmetric function of $G$} to be 
\[
Y_G=(X_G+\omega(X_G))/2 \,.
\]

Then it is clear from the definition that $\omega(Y_G)=Y_G$. We also note that if $X_G$ belongs to $\Gamma$, then $Y_{G}=X_{G}$ and it follows that $\omega(Y_G)=Y_G$ is $p$-positive. From Example~\ref{exam:chro}, we obtain two near chromatic symmetric functions which belong to the algebra $\Gamma$.

\begin{prop} \label{prop:nearchro} For a graph $G$, let $Y_{G}$ be its near chromatic symmetric function. 
\begin{enumerate}
    \item[(a)] For the triangle $C_3$, we have 
    \[Y_{C_3}=p_1^3+2p_3\,.\]
    \item[(b)] For the star graph $S_n$, we have 
         \[
         Y_{S_n}=\begin{cases}
\binom{n-1}{0}p_{1}^n+\binom{n-1}{2}p_3p_{1}^{n-3}+\cdots + \binom{n-1}{n-1}p_{n} & \text{if~} n \text{~is odd},\\[2ex] 
\binom{n-1}{0}p_1^{n}+\binom{n-1}{2}p_3p_{1}^{n-3}+\cdots + \binom{n-1}{n-2}p_{n-1}p_1 & \text{if~} n \text{~is even}\,.\\[1.2ex]
\end{cases}
         \]
Moreover, \{$Y_{S_n}\,|\, n \text{: odd }\}$ is algebraically independent over $\mathbb{Q}$.   
\end{enumerate}
\end{prop}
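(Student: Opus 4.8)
The plan is to reduce both displayed formulas to Stanley's $p$-expansions already recorded in Example~\ref{exam:chro}, combined with the rule $\omega(p_r)=(-1)^{r-1}p_r$, and then to read off algebraic independence from the triangular shape of the resulting $p$-expansion.

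First I would compute $Y_{C_3}$ directly. Applying $\omega$ termwise to $X_{C_3}=p_1^3-3p_2p_1+2p_3$ and using $\omega(p_1)=p_1$, $\omega(p_2)=-p_2$, $\omega(p_3)=p_3$ gives $\omega(X_{C_3})=p_1^3+3p_2p_1+2p_3$; averaging cancels the $p_2p_1$ term and leaves $Y_{C_3}=p_1^3+2p_3$. The same mechanism handles $S_n$: in the generic summand $(-1)^r\binom{n-1}{r}p_{r+1}p_1^{\,n-r-1}$ of $X_{S_n}$ one has $\omega\bigl(p_{r+1}p_1^{\,n-r-1}\bigr)=(-1)^r p_{r+1}p_1^{\,n-r-1}$, so each summand of $\omega(X_{S_n})$ is obtained from that of $X_{S_n}$ by deleting the sign $(-1)^r$. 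Consequently the coefficient of $p_{r+1}p_1^{\,n-r-1}$ in $Y_{S_n}=\tfrac12\bigl(X_{S_n}+\omega(X_{S_n})\bigr)$ equals $\tfrac12\binom{n-1}{r}\bigl((-1)^r+1\bigr)$, which vanishes for odd $r$ and equals $\binom{n-1}{r}$ for even $r$. Keeping the surviving terms (those with $r$ even, i.e.\ $r+1$ odd) and separating the last one according to the parity of $n$ yields exactly the two displayed cases.

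For the algebraic independence of $\{Y_{S_n}\mid n\text{ odd}\}$ I would exploit the triangularity just obtained. For odd $n$ the surviving summands are indexed by odd $k=r+1$ with $1\le k\le n$, and the only summand whose underlying partition of $n$ has a part equal to $n$ is the term $k=n$, namely $\binom{n-1}{n-1}p_n=p_n$. Hence
\[
Y_{S_n}=p_n+f_n(p_1,p_3,\dots,p_{n-2}),
\]
where $f_n=\sum_{k\text{ odd},\,1\le k\le n-2}\binom{n-1}{k-1}\,p_k\,p_1^{\,n-k}$ is a polynomial in the strictly smaller odd power sums. Thus, for any fixed $m$, the map $p_{2j-1}\mapsto Y_{S_{2j-1}}$ $(1\le j\le m)$ is a unitriangular change of generators of the polynomial ring $\mathbb{Q}[p_1,p_3,\dots,p_{2m-1}]$, and back-substitution makes it an algebra automorphism. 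Since $\{p_1,p_3,\dots,p_{2m-1}\}$ is algebraically independent by Theorem~\ref{thm:gamma}(d), so is its image $\{Y_{S_1},Y_{S_3},\dots,Y_{S_{2m-1}}\}$; letting $m\to\infty$ gives the claim.

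I expect the only genuinely non-computational point to be this last one: isolating $p_n$ as the distinguished top term of $Y_{S_n}$ and arguing that the resulting unitriangular substitution is invertible, so that an algebraic relation among finitely many $Y_{S_n}$ would pull back to one among the corresponding $p_n$, contradicting Theorem~\ref{thm:gamma}(d). Everything preceding it is a mechanical application of $\omega$ to the known $p$-expansions.
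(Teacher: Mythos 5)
Your proof is correct and follows essentially the same route as the paper, which presents the two formulas as immediate consequences of Example~\ref{exam:chro} and the definition $Y_G=(X_G+\omega(X_G))/2$ together with $\omega(p_r)=(-1)^{r-1}p_r$, so that the even-power-sum terms cancel and the odd ones survive with coefficient $\binom{n-1}{r}$. Your back-substitution (unitriangularity) argument for the algebraic independence of $\{Y_{S_n}\,|\, n \text{ odd}\}$, isolating $p_n$ as the top term, is precisely the observation the paper relies on implicitly (and invokes again in the proof of its Theorem on generator sets), so there is no gap.
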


As the near chromatic symmetric function $Y_G$ is obtained from the $p$-expansion of $X_G$ by getting rid of all terms $p_{\la}$ for partitions $\la$ with an odd number of even parts, $Y_G$ can belong to $\Gamma$ for some graphs $G$ although $X_G$ is not an element of $\Gamma$. It turns out that there is no connected graph $G$ such that $Y_{G}\in\Gamma$ except $C_3$ and $S_n$ for $n\geq1$.\\

We say that two edges $\{v_1, v_2\}$ and $\{v_3, v_4\}$ are \emph{disjoint} if $v_1, v_2, v_3$, and $v_4$ are all distinct.

\begin{lem} \label{lem:disjoint}
Let $G$ be a simple graph such that $Y_G\in\Gamma$. Then $G$ has no pair of disjoint edges.
Furthermore, if $G$ is connected, then $G$ is either $C_3$ or $S_n$ for $n\geq 1$.
\end{lem}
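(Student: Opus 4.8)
The plan is to read off, from Stanley's power-sum expansion of $X_G$ in Theorem~\ref{thm:chro}, a single ``forbidden'' coefficient whose nonvanishing exactly detects a pair of disjoint edges, and then to settle the connected case by a short structural argument. Throughout I would use the two basic facts recorded earlier. First, the involution acts by $\omega(p_\la)=(-1)^{|\la|-\ell(\la)}p_\la$, so $\omega$ fixes $p_\la$ precisely when $\la$ has an even number of even parts. Second, by Theorem~\ref{thm:gamma}(d) a symmetric function lies in $\Gamma^n$ if and only if its expansion in the basis $\{p_\la : \la\vdash n\}$ of $\Lambda^n$ is supported on partitions into odd parts. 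Consequently, to prove $Y_G\notin\Gamma$ it suffices to exhibit one partition $\la$ having at least one even part whose coefficient in $Y_G$ is nonzero.

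For the first assertion I would track the coefficient of $p_{(2,2,1^{n-4})}$, where $n=|V|$. By Theorem~\ref{thm:chro} its value in $X_G$ is $\sum_{S}(-1)^{|S|}$ over those $S\subseteq E$ with $\la(S)=(2,2,1^{n-4})$. Such an $S$ must produce a spanning subgraph with exactly two components of size $2$ and $n-4$ isolated vertices; since the only connected graph on two vertices is a single edge, $S$ consists of exactly two edges spanning four distinct vertices, that is, $S$ is a pair of disjoint edges, each contributing $(-1)^2=1$. Hence the coefficient of $p_{(2,2,1^{n-4})}$ in $X_G$ equals the number of pairs of disjoint edges of $G$. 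Because $(2,2,1^{n-4})$ has exactly two even parts, $\omega$ fixes $p_{(2,2,1^{n-4})}$, so this same number is the coefficient of $p_{(2,2,1^{n-4})}$ in $Y_G=(X_G+\omega(X_G))/2$. As this partition has an even part, a nonzero coefficient forces $Y_G\notin\Gamma$; contrapositively, $Y_G\in\Gamma$ implies that $G$ has no pair of disjoint edges.

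For the second assertion it remains to classify connected simple graphs in which every two edges share a vertex. If $G$ has at most one edge, connectivity gives $G=S_1$ or $G=S_2$. Otherwise fix two distinct edges; being distinct and intersecting, they meet in a single vertex, say $e_1=\{a,b\}$ and $e_2=\{a,c\}$. I would then split into two cases according to whether every edge of $G$ contains $a$. If so, all edges pass through $a$ and connectivity makes $G$ a star $S_n$. If not, some edge $e_3$ avoids $a$, and meeting both $e_1$ and $e_2$ forces $e_3=\{b,c\}$, so $\{a,b,c\}$ carries a triangle. A further check shows that any additional edge would have to meet all three sides of this triangle, which is impossible as soon as it uses a vertex outside $\{a,b,c\}$; hence every edge lies inside $\{a,b,c\}$ and, by connectivity, $G=C_3$.

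The coefficient bookkeeping in the second paragraph is routine; the place demanding care is making the contributing-subsets claim airtight, namely that the only $S$ with $\la(S)=(2,2,1^{n-4})$ are pairs of disjoint edges, so that no cancellation occurs and the coefficient is genuinely the count of disjoint-edge pairs. The main obstacle I anticipate is ensuring the case analysis in the structural classification is exhaustive, in particular verifying that once a triangle appears no edge touching a fourth vertex can simultaneously meet all three edges of the triangle, which is exactly what rules out every connected graph other than $C_3$ and the stars.
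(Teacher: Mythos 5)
Your proof is correct and follows essentially the same route as the paper's: both extract the coefficient of $p_{(2,2,1^{n-4})}=p_2^2p_1^{n-4}$ from Stanley's expansion, observe it counts pairs of disjoint edges and survives unchanged in $Y_G$ (hence must vanish when $Y_G\in\Gamma$), and then classify connected graphs with no two disjoint edges. You simply spell out the details — no cancellation among contributing subsets $S$, the parity argument for $\omega$ fixing this $p_\lambda$, and the case analysis behind the paper's ``it is easy to see'' — which is a faithful elaboration rather than a different argument.
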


\begin{proof}
We suppose that there are $r$ different ways to select two disjoint edges of $G$. By Theorem~\ref{thm:chro}, the coefficient of $p_2^2p_1^{n-4}$ in the $p$-expansion of  $X_G$ is equal to $r$. Hence, from the definition of $Y_G$, the coefficient of $p_2^2p_1^{n-4}$ in the $p$-expansion of $Y_{G}$ is also $r$, and if $Y_G \in \Gamma$ then $r$ must be zero so that $G$ has no pair of disjoint edges. It is easy to see that a connected graph with no pair of disjoint edges is either $C_3$ or $S_n$ for $n\geq 1$.
\end{proof}

From Lemma~\ref{lem:disjoint}, we can see that $Y_G$ need not be the same as $\prod_{i}Y_{G_i}$, where $G_i$ are connected components of $G$. We now classify all graphs $G$ such that $Y_G\in\Gamma$. % A graph with no edge is called a \emph{null graph}.

\begin{prop} \label{prop:classify}
Let $G$ be a simple graph. Then, $Y_G\in\Gamma$ if and only if $G$ is a disjoint union of a graph $H$ and a  null graph, where $H$ is one of the graphs $C_3$ and $S_n$ for $n\geq 1$. 
\end{prop}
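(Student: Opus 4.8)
The plan is to prove the two implications separately, drawing on Lemma~\ref{lem:disjoint} and Proposition~\ref{prop:nearchro} together with the multiplicativity of both the chromatic symmetric function and the involution $\omega$.

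For the \emph{if} direction, suppose $G = H \cup N$, where $N$ is a null graph on $m$ vertices and $H$ is $C_3$ or $S_n$. Since $X_G$ is multiplicative over disjoint unions, $X_G = X_H \cdot X_N = X_H \cdot p_1^m$. Because $\omega$ is a ring homomorphism fixing $p_1$, the null factor satisfies $\omega(X_N) = p_1^m$, so the symmetrization localizes onto the $H$-factor:
\[
Y_G = \frac{X_G + \omega(X_G)}{2} = \frac{X_H \, p_1^m + \omega(X_H)\, p_1^m}{2} = Y_H \cdot p_1^m .
\]
By Proposition~\ref{prop:nearchro}, $Y_H \in \Gamma$, and since $p_1 \in \Gamma$, I conclude that $Y_G = Y_H\, p_1^m \in \Gamma$.

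For the \emph{only if} direction, I would assume $Y_G \in \Gamma$ and invoke Lemma~\ref{lem:disjoint} to get that $G$ has no pair of disjoint edges. The first observation is that all edges of $G$ must lie in a single connected component: if two distinct components each carried an edge, those edges would be vertex-disjoint, contradicting the lemma. Let $H$ be this edge-bearing component, assuming $G$ has an edge at all. Being connected and free of disjoint edges, $H$ is $C_3$ or $S_n$ (with $n \geq 2$) by the structural classification established in the proof of Lemma~\ref{lem:disjoint}. The remaining vertices of $G$ are then isolated and constitute a null graph $N$, giving $G = H \cup N$. If instead $G$ has no edge, then $G$ is itself null and I may write $G = S_1 \cup N$ by peeling off one vertex. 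In every case $G$ attains the asserted form.

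The genuinely substantive input is Lemma~\ref{lem:disjoint}, which converts the algebraic membership $Y_G \in \Gamma$ into the purely combinatorial constraint of having no disjoint edges; what remains is the bookkeeping above. I expect the subtlest point to be the factorization $Y_G = Y_H\, p_1^m$ in the \emph{if} direction: one must use that the null-graph factor $X_N = p_1^m$ is fixed by $\omega$, so that $\omega$ passes through it and the averaging acts only on $X_H$. Absent this, one cannot conclude $Y_{H \cup N} = Y_H\, Y_N$ in general, which is exactly the warning recorded right after Lemma~\ref{lem:disjoint}.
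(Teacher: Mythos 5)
Your proposal is correct and takes essentially the same approach as the paper: the \emph{if} direction rests on the factorization $Y_G = Y_H\, p_1^m$ (which the paper simply asserts and you justify by noting $\omega$ fixes the null factor $p_1^m$), and the \emph{only if} direction invokes Lemma~\ref{lem:disjoint} to force all edges into one connected component, which must then be $C_3$ or $S_n$ by the structural fact from that lemma's proof. Your explicit treatment of the edgeless case is a minor elaboration the paper leaves implicit.
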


\begin{proof}
We first note that if $G$ is a disjoint union of $H$ and the null graph with $d$ vertices, then    
\[
Y_{G}=p_1^{d}\,Y_{H}\in\Gamma \,.
\]

Now, we suppose that $G$ is a simple graph such that $Y_G \in \Gamma$. From Lemma~\ref{lem:disjoint}, $G$ has no pair of disjoint edges. Hence, all edges are in the same connected component of $G$, say $H$, so that $G$ is a disjoint union of $H$ and a null graph. Moreover, $H$ is either $C_3$ or $S_n$ for $n\geq 1$ since $H$ is a connected graph with no pair of disjoint edges.
\end{proof}

We have come to the conclusion that there are only two generator sets of near chromatic symmetric functions for the algebra $\Gamma$ which are algebraically independent:

\begin{thm} 
Two sets $\{Y_{S_1}, Y_{C_3}, Y_{S_5}, Y_{S_7}, \dots\}$ and $\{Y_{S_1}, Y_{S_3}, Y_{S_5}, Y_{S_7}, \dots\}$ of near chromatic symmetric functions are algebraically independent generator sets for the algebra $\Gamma$. Moreover, they are the only algebraically independent generator sets for $\Gamma$ consisting of near chromatic symmetric functions.
\end{thm}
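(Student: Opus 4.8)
The plan is to reduce the statement to a computation in the module of indecomposables of the graded polynomial algebra $\Gamma$. Write $\Gamma^+=\bigoplus_{n\geq 1}\Gamma^n$ for the augmentation ideal, let $Q(\Gamma)=\Gamma^+/(\Gamma^+)^2$ be its space of indecomposables, and write $[f]$ for the class of $f\in\Gamma^+$. Every near chromatic symmetric function is homogeneous (of degree equal to its number of vertices). By Theorem~\ref{thm:gamma}(d), $\Gamma^n$ has basis $\{p_\la : \la\in OP(n)\}$; a partition in $OP(n)$ has length one precisely when $n$ is odd (namely $\la=(n)$), and every $p_\la$ with $\ell(\la)\geq 2$ is a product of power sums, hence lies in $(\Gamma^+)^2$. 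Therefore $Q(\Gamma)$ is one-dimensional in each odd degree, spanned by $[p_d]$, and zero in each even degree. The general principle I will use is that, for a connected, locally finite, graded polynomial algebra over a field, a set $\mathcal B$ of homogeneous elements is algebraically independent and generates the algebra \emph{if and only if} its image $\overline{\mathcal B}$ is a linear basis of $Q(\Gamma)$.

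First I would record the image of each admissible near chromatic function in $Q(\Gamma)$. By Proposition~\ref{prop:classify}, the near chromatic symmetric functions lying in $\Gamma$ are exactly $p_1^{\,d}\,Y_H$ with $d\geq 0$ and $H\in\{C_3\}\cup\{S_n : n\geq 1\}$. Any factor $p_1^{\,d}$ with $d\geq 1$ places the product in $(\Gamma^+)^2$, so such elements are decomposable and map to $0$; likewise $Y_{S_n}$ for even $n$ has even degree and is therefore decomposable. Using the $p$-expansions of Proposition~\ref{prop:nearchro}, in which every summand other than the top one carries a factor of $p_1$, the remaining elements have classes
\[
\overline{Y_{S_n}}=[p_n]\quad(n\text{ odd}),\qquad \overline{Y_{C_3}}=2\,[p_3].
\]
Hence, among all near chromatic functions in $\Gamma$, the indecomposable ones are precisely $\{Y_{S_n}: n\text{ odd}\}$ together with $Y_{C_3}$: the unique one of degree $1$ is $Y_{S_1}$, the unique one of each odd degree $n\geq 5$ is $Y_{S_n}$, and the degree-$3$ indecomposables are exactly $Y_{S_3}$ and $Y_{C_3}$, each with nonzero class.

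Finally I would assemble the classification. By the equivalence in the first paragraph, an algebraically independent generating set of $\Gamma$ consisting of near chromatic functions is the same thing as a choice, in each odd degree, of a near chromatic function whose class is a nonzero multiple of $[p_d]$, with exactly one such function per odd degree and none in even degrees. By the second paragraph this forces $Y_{S_1}$ in degree $1$ and $Y_{S_n}$ in each odd degree $n\geq 5$, while in degree $3$ it permits either $Y_{S_3}$ or $Y_{C_3}$. This produces exactly the two sets $\{Y_{S_1},Y_{S_3},Y_{S_5},\dots\}$ and $\{Y_{S_1},Y_{C_3},Y_{S_5},\dots\}$; the same criterion simultaneously guarantees that both are genuinely algebraically independent generating sets, which for the first set also recovers Proposition~\ref{prop:nearchro}(b).

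The main obstacle is the general principle invoked at the outset: converting ``algebraically independent and generating'' into ``basis of $Q(\Gamma)$.'' For the forward direction the crucial point is that no member of a free homogeneous generating set can be decomposable—were some generator $f$ to lie in $(\Gamma^+)^2$, expressing $f$ through the generators would exhibit a nontrivial polynomial relation—so the members map to linearly independent classes spanning $Q(\Gamma)$; the converse follows from graded Nakayama together with a Hilbert-series count showing that a set with the minimal number of generators in each degree must be algebraically independent. Once this equivalence and the vanishing/nonvanishing of the classes $\overline{Y_H}$ are in hand, the uniqueness is routine bookkeeping, the only genuine freedom being the two interchangeable degree-$3$ generators $Y_{S_3}$ and $Y_{C_3}$.
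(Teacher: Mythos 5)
Your proposal is correct, and it reaches the theorem by a more systematic route than the paper does. The paper's own proof is essentially two sentences: algebraic independence and generation of the two sets is declared ``immediate'' from the triangular $p$-expansions of Proposition~\ref{prop:nearchro} (each $Y_{S_n}$ with $n$ odd equals $p_n$ plus terms that are products of smaller odd power sums, and $Y_{C_3}=p_1^3+2p_3$), and the uniqueness claim is disposed of by a bare appeal to Proposition~\ref{prop:classify}, with the bookkeeping left implicit. You use exactly the same two computational inputs (Propositions~\ref{prop:nearchro} and \ref{prop:classify}), but you run them through the indecomposables module $Q(\Gamma)=\Gamma^+/(\Gamma^+)^2$ and the criterion that a set of homogeneous elements is an algebraically independent generating set if and only if its image is a linear basis of $Q(\Gamma)$ --- forward direction by the relation argument you give, converse by graded Nakayama plus a Hilbert-series count. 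What this machinery buys is a genuinely complete proof of the ``only'' half of the theorem: your framework explains precisely why a valid set can contain no $p_1^d\,Y_H$ with $d\geq 1$ (decomposable, class zero), no $Y_{S_n}$ with $n$ even (even degree, where $Q(\Gamma)$ vanishes), must contain exactly one element per odd degree, and why degree $3$ alone offers two choices, since $\overline{Y_{S_3}}=[p_3]$ and $\overline{Y_{C_3}}=2[p_3]$ are each nonzero but proportional (indeed $Y_{C_3}-2Y_{S_3}+Y_{S_1}^3=0$, so both cannot coexist in an independent set). The cost is invoking general commutative-algebra machinery where the paper's triangularity argument is entirely elementary; the underlying mathematics is the same, but your write-up fills a gap in rigor that the paper's one-line uniqueness argument glosses over.
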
 
\begin{proof} Since $\Gamma$ is generated by odd power sum symmetric functions and $p_k$, for odd $k$ are algebraically independent, it is immediate from Proposition~\ref{prop:nearchro} that two given sets are algebraically independent and
\[
\Gamma=\mathbb{Q}[Y_{S_1}, Y_{C_3}, Y_{S_5}, Y_{S_7}, \dots]
     =\mathbb{Q}[Y_{S_1}, Y_{S_3}, Y_{S_5}, Y_{S_7}, \dots]\,.
\]
Proposition~\ref{prop:classify} proves that there is no other algebraically independent generator set of near chromatic symmetric functions.
\end{proof}

We close this section by giving new bases for the space $\Gamma^n$ of $n$th degree symmetric functions in $\Gamma$. For a given set $\mathbf{B}=\{G_1, G_3, G_5, \dots\}$ of simple graphs where $G_k$ has $k$ vertices for $k=1, 3, 5, \dots$, we define a set $\mathcal{Y}(\mathbf{B})\subseteq \Gamma$ of symmetric functions in $\Gamma$ as $\mathcal{Y}(\mathbf{B})=\{\prod_{i}Y_{G_{\la_i}}\,|\,\la\in OP(n)\}$.

\begin{thm} \label{thm:graph}
Let $S_n$ be the star graph with $n$ vertices for $n\geq1$ and $C_3$ be the triangle. Let $\mathbf{B}_1=\{S_1, C_3, S_5, S_7, \dots \}$ and $\mathbf{B}_2=\{S_1, S_3, S_5, S_7, \dots \}$. Then $\mathcal{Y}(\mathbf{B}_1)$ and $\mathcal{Y}(\mathbf{B}_2)$ are bases of $\Gamma^n$.
\end{thm}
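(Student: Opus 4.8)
The plan is to deduce this as a graded consequence of the preceding theorem, which already supplies the essential content. Fix $\mathbf{B}\in\{\mathbf{B}_1,\mathbf{B}_2\}$ and write $g_k=Y_{G_k}$ for odd $k$, so that $g_1=Y_{S_1}$, while $g_3$ is either $Y_{C_3}$ or $Y_{S_3}$, and $g_k=Y_{S_k}$ for $k\geq 5$ odd. By the preceding theorem the set $\{g_1,g_3,g_5,\dots\}$ is algebraically independent over $\mathbb{Q}$ and satisfies $\Gamma=\mathbb{Q}[g_1,g_3,g_5,\dots]$; thus $\Gamma$ is a polynomial algebra on these generators. The whole argument is then to match the degree grading of this polynomial algebra against the index set $OP(n)$.

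First I would record that each generator is homogeneous of the appropriate degree. For a graph $G_k$ on $k$ vertices, every proper coloring contributes a monomial of degree $k$ to $X_{G_k}$, so $X_{G_k}$ is homogeneous of degree $k$; since $\omega$ is a graded automorphism of $\Lambda$, the function $Y_{G_k}=(X_{G_k}+\omega(X_{G_k}))/2$ is homogeneous of degree $k$ as well. Consequently, in $\Gamma=\mathbb{Q}[g_1,g_3,g_5,\dots]$ equipped with the grading inherited from $\Lambda$, a monomial $\prod_{k\text{ odd}}g_k^{m_k}$ is homogeneous of degree $\sum_k k\,m_k$.

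Next I would align monomials with partitions into odd parts. A degree-$n$ monomial in the $g_k$ is exactly a choice of exponents $(m_1,m_3,m_5,\dots)$ with $\sum_k k\,m_k=n$, which is precisely the data of a partition $\lambda\in OP(n)$ through $m_k=m_k(\lambda)$; under this correspondence $\prod_i Y_{G_{\lambda_i}}=\prod_{k\text{ odd}}g_k^{m_k(\lambda)}$. Hence $\mathcal{Y}(\mathbf{B})$ is exactly the set of degree-$n$ monomials in the generators, and the map $\lambda\mapsto\prod_i Y_{G_{\lambda_i}}$ is a bijection from $OP(n)$ onto $\mathcal{Y}(\mathbf{B})$, so that $|\mathcal{Y}(\mathbf{B})|=|OP(n)|$. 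To finish, algebraic independence of the $g_k$ makes all monomials $\prod_k g_k^{m_k}$ linearly independent over $\mathbb{Q}$, and in particular the degree-$n$ ones; while $\Gamma=\mathbb{Q}[g_1,g_3,\dots]$ together with homogeneity of each $g_k$ shows that every element of $\Gamma^n$ is a $\mathbb{Q}$-linear combination of degree-$n$ monomials, i.e. of elements of $\mathcal{Y}(\mathbf{B})$. Therefore $\mathcal{Y}(\mathbf{B})$ is a basis of $\Gamma^n$; one may alternatively combine the spanning statement with $\dim\Gamma^n=|OP(n)|$ from Theorem~\ref{thm:gamma}(d).

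I do not expect a genuine obstacle here: the difficult part, namely algebraic independence and the fact that these near chromatic symmetric functions generate all of $\Gamma$, is already established in the preceding theorem. The only point that needs care is the bookkeeping in the previous paragraph that identifies the degree-$n$ monomials in the homogeneous generators $g_k$ with $OP(n)$, and this is routine.
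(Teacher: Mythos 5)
Your proposal is correct and follows essentially the same route as the paper: linear independence of the degree-$n$ products comes from the algebraic independence of the near chromatic generators established earlier, and identifying those products with $OP(n)$ finishes the argument. The paper concludes by matching the cardinality $|OP(n)|$ against $\dim\Gamma^n=|OP(n)|$ from Theorem~\ref{thm:gamma} (exactly your alternative ending), whereas your main line derives spanning directly from generation plus homogeneity --- a harmless variation.
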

\begin{proof} 
We know from Proposition~\ref{prop:nearchro} that $\mathcal{Y}(\mathbf{B}_1)$ and $\mathcal{Y}(\mathbf{B}_2)$ are linearly independent. Since both sets have $|OP(n)|$ elements and $\mathrm{dim} \Gamma=|OP(n)|$ by Theorem~\ref{thm:gamma}, the proof is completed.
\end{proof}

%==================================================================================

\section{On $p$-positivity of skew Schur $Q$-functions}\label{sec:p-positivity}

There are many positivity questions in the theory of symmetric functions, mainly on Schur positivity, $e$-positivity or $h$-positivity, which are raised usually in the relation to the representation theory, but not so many questions on $p$-positivity. However, in the work related to the \emph{chromatic symmetric functions} $p$-positivity problems naturally appear. %in many situations. 
This is basically because $X_G$ has an interesting property that $\omega(X_G)$ is $p$-positive. 
For instance, in the work to classify classical symmetric functions that can be recognized as chromatic symmetric functions in \cite{CW2}, 
knowing the $p$-positivity of skew Schur functions played an important role.

\begin{prop}\cite[Proposition 2.6]{CW2} \label{prop:p-positive Schur}
A skew Schur function $s_D$ is $p$-positive if and only if $D$ is a horizontal strip.
\end{prop}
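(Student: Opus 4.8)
The plan is to prove the two implications separately, dispatching the ``if'' direction as a short factorization and placing all the content in the ``only if'' direction, which I would handle not by hunting for one negative coefficient (the offending $p_\rho$ changes from shape to shape) but by a single evaluation that sees the \emph{sum} of all power-sum coefficients at once. Throughout write $D=\la/\mu$ and $s_D=\sum_{\rho\vdash n}c_\rho p_\rho$, so that $s_D$ is $p$-positive exactly when every $c_\rho\ge 0$.

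For the ``if'' direction I would argue as follows. If $D$ is a horizontal strip then no column contains two boxes, and since each row of a skew diagram is a contiguous segment and distinct rows share no column, the edgewise connected components of $D$ are precisely its nonempty rows. Writing their lengths as $\nu_1,\nu_2,\dots$, the skew Schur function factors as $s_D=\prod_i s_{(\nu_i)}=\prod_i h_{\nu_i}$. Each $h_r=\sum_{\la\vdash r}z_\la^{-1}p_\la$ is $p$-positive because every $z_\la^{-1}>0$, and a product of $p$-positive functions is $p$-positive since $p_\la p_\mu=p_{\la\cup\mu}$ has positive structure constants. Hence $s_D$ is $p$-positive.

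For the ``only if'' direction I would use the contrapositive. Suppose $D$ is not a horizontal strip; then some column of $D$ contains at least two boxes. The key step is to apply the algebra homomorphism $\Lambda\to\mathbb Q$ induced by the one-variable specialization $x_1=1$, $x_i=0$ for $i\ge 2$. Under this map $p_j\mapsto 1$ for every $j\ge 1$, hence $p_\rho\mapsto 1$ for every $\rho$, so the image of $s_D$ equals $\sum_\rho c_\rho$. On the other hand this image is $s_D(1,0,0,\dots)$, which counts the semistandard tableaux of shape $D$ all of whose entries equal $1$; strict increase along columns makes such a filling impossible as soon as some column has two boxes, so $s_D(1,0,0,\dots)=0$ and therefore $\sum_\rho c_\rho=0$. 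If $s_D$ were $p$-positive, all $c_\rho\ge 0$, and a vanishing sum of nonnegative numbers forces every $c_\rho=0$, i.e.\ $s_D=0$; this contradicts the fact that a skew Schur function of a nonempty shape is a nonzero element of $\Lambda$ (it is a nonzero sum of monomials, e.g.\ the filling of each cell by its row index is a valid tableau). Hence $s_D$ is not $p$-positive.

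The computations in both directions are short, so the crux is conceptual rather than technical: recognizing that the sum of all power-sum coefficients of a homogeneous symmetric function is recovered by the single-variable principal specialization, and that this very specialization is the indicator function of ``$D$ is a horizontal strip.'' The only points I expect to require care are the bookkeeping that the components of a horizontal strip are genuinely single rows (so $s_D$ is literally a product of $h_r$'s) and the dual identification of $s_D(1,0,0,\dots)$ with the one-box-per-column condition; both follow from the contiguity of the rows and columns of a skew diagram. The payoff of this route is that it is uniform over all non-horizontal-strip shapes, avoiding any Murnaghan--Nakayama or character bound and thereby sidestepping the genuine difficulty that no fixed $p_\rho$ witnesses non-positivity for every such $D$.
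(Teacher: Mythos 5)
Your proof is correct in both directions, and it is worth noting that this paper itself gives no proof of the proposition --- it is quoted from [CW2, Proposition 2.6] --- so the real comparison is with the technique the paper uses for its Schur-$Q$ analogue (Proposition~\ref{Q_not_ribbon} via Lemma~\ref{lem:coef}). There your argument is essentially the same one: the paper pairs the symmetric function with $s_{(n)}=\sum_{\lambda\vdash n} z_\lambda^{-1}p_\lambda$ and observes that the pairing vanishes because the shape admits no tableau of content $(n)$, concluding that the sum of all $p$-coefficients is zero. Your one-variable specialization $x_1=1$, $x_i=0$ for $i\geq 2$ is exactly this linear functional in disguise: it sends every $p_\rho$ to $1$, and $s_D(1,0,0,\dots)$ is the coefficient of $x_1^n$ in $s_D$, i.e.\ $\langle s_D, s_{(n)}\rangle$, which is the number of semistandard fillings with all entries equal to $1$ --- nonzero precisely when $D$ is a horizontal strip. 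So the route is the same; what your write-up adds, usefully, are the two steps the paper's lemma leaves implicit: the factorization $s_D=\prod_i h_{\nu_i}$ for horizontal strips (the ``if'' direction, with the correct justification that the connected components of a horizontal strip are single rows), and the explicit use of $s_D\neq 0$ to upgrade ``coefficient sum zero'' to ``some coefficient is strictly negative.''
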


In this section we work on the Schur $Q$-analogue of Proposition~\ref{prop:p-positive Schur}; that is, on the classification of \emph{$p$-positive skew Schur $Q$-functions}.  
Throughout the section, any shifted skew diagram $\widetilde{\la/\mu}$ is regarded to be connected unless specifically noted. We recall from Theorem~\ref{thm:gamma} (d), that a skew Schur $Q$-function can be written as a linear sum of \emph{odd} power sum symmetric functions, and also recall the scalar product on $\Lambda$ satisfying $\< s_{\la},s_{\mu} \ra=\delta _{\la \mu}$ and 
$\< p_{\la},p_{\mu} \ra=\delta _{\la \mu} z_{\la}$. To begin with, we show that 
a skew Schur $Q$-function $Q_{\la/\mu}$ is not $p$-positive unless $\widetilde{\la/\mu}$ is a ribbon.

\begin{prop}\label{Q_not_ribbon}
If a shifted skew diagram $\widetilde{\la/\mu}$ contains three boxes $(i,j),~(i,j+1)$, and $(i+1,j+1)$ for some $i$ and $j$, then the skew Schur $Q$-function $Q_{\la/\mu}$ is not $p$-positive.
\end{prop}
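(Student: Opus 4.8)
My plan is to translate $p$-positivity into a statement about scalar products and then exhibit a single offending power sum. Because $Q_{\la/\mu}\in\Gamma^n$ (Theorem~\ref{thm:gamma}(b)) and $\{p_\nu:\nu\in OP(n)\}$ is an orthogonal basis of $\Gamma^n$ with $\langle p_\nu,p_\nu\rangle=z_\nu>0$, the coefficient of $p_\nu$ in the power-sum expansion of $Q_{\la/\mu}$ equals $z_\nu^{-1}\langle Q_{\la/\mu},p_\nu\rangle$. Hence $Q_{\la/\mu}$ fails to be $p$-positive as soon as I can find one odd partition $\nu$ with $\langle Q_{\la/\mu},p_\nu\rangle<0$, and the entire proof is the production of such a $\nu$.

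First I would pin down the local obstruction. The hypothesis that $\widetilde{\la/\mu}$ contains $(i,j),(i,j+1),(i+1,j+1)$ says precisely that $\widetilde{\la/\mu}$ is not a ribbon (the box $(i+1,j+1)$ has $(i,j)$ to its upper left), and these three boxes form a shifted copy of $\widetilde{(2,1)}$ occupying two consecutive rows. The prototype is the hook family: for $m\geq 2$ the two-row expansion gives $Q_{(m,1)}=q_mq_1-2q_{m+1}$, and feeding this through \eqref{eqn:q_to_p} yields, after a short computation, a strictly negative coefficient — namely the coefficient of $p_{(m+1)}$ equals $-\tfrac{4}{m+1}$ when $m$ is even, and the coefficient of $p_{(m,1)}$ equals $-\tfrac{4}{m}$ when $m$ is odd. (For $m=2$ this is $Q_{(2,1)}=\tfrac43 p_1^3-\tfrac43 p_3$.) In every case the single connected strip covering the $2\times 2$-minus-a-box region has height one, so it enters with sign $-1$, and this is the source of the negativity.

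For a general $\widetilde{\la/\mu}$ I would compute $\langle Q_{\la/\mu},p_\nu\rangle$ by the Murnaghan--Nakayama rule for skew Schur $Q$-functions, iterating a recursion of the form
\[
\langle Q_{\la/\mu},\,p_r\,p_\rho\rangle=\sum_{\xi}(-1)^{\mathrm{ht}(\xi)}\,2^{\,\delta(\xi)}\,\langle Q_{(\la/\mu)\setminus\xi},\,p_\rho\rangle ,
\]
where $\xi$ runs over the border strips of size $r$ removable from the outer boundary, $\mathrm{ht}(\xi)$ is one less than the number of rows that $\xi$ meets, and $\delta(\xi)\in\{0,1\}$ records whether $\xi$ reaches the main diagonal. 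Removing singletons ($r=1$, height $0$) strips the diagram down toward the two rows carrying the L, and a size-$3$ strip placed across $(i,j),(i,j+1),(i+1,j+1)$ has height one and contributes the decisive $-1$, exactly as in the hook base cases.

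The hard part, and where the real work lies, is that this sign count is not visibly negative. The shifted rule allows generalized strips that cross the diagonal (the ``double strips,'' carrying the extra factors $2^{\delta(\xi)}$), and different border-strip tableaux of the same content $\nu$ compete and can cancel, so no fixed $\nu$ works uniformly. For instance $Q_{(4,1)}=\tfrac45 p_1^5-\tfrac45 p_5$, so the coefficient of $p_3p_1^2$ is $0$ and only $p_5$ detects the failure; likewise $Q_{(5,1)}=\tfrac{16}{45}p_1^6+\tfrac89 p_3p_1^3-\tfrac49 p_3^2-\tfrac45 p_5p_1$ is detected only by the parts $(3,3)$ and $(5,1)$. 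Thus $\nu$ must be chosen in a shape-dependent way, and to finish I would either build a sign-reversing involution on the competing tableaux that cancels all positive contributions against one another and leaves a single unpaired height-one strip from the L, or use the $r=1$ peelings to reduce the signed count to the explicit hook computations of the previous paragraph. Making this cancellation uniform in $\la$ and $\mu$ is the main obstacle.
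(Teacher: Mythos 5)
Your proposal is not yet a proof: it is a plan whose decisive step is explicitly left open. You reduce the problem to exhibiting one odd partition $\nu$ with $\langle Q_{\la/\mu},p_\nu\rangle<0$, verify this by hand for the hooks $Q_{(m,1)}$, and then propose to handle the general case by a shifted Murnaghan--Nakayama recursion combined with ``either a sign-reversing involution \dots or $r=1$ peelings,'' conceding yourself that making this cancellation uniform in $\la$ and $\mu$ is the main obstacle. That obstacle is the entire content of the proposition. Your own examples already show why it is serious: $Q_{(4,1)}$ is detected only by $p_{(5)}$, and $Q_{(5,1)}$ only by $p_{(3,3)}$ and $p_{(5,1)}$, so the witness $\nu$ must depend on the shape in a way your argument never pins down. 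The singleton-peeling reduction is also unjustified: removing parts equal to $1$ from an arbitrary shifted skew shape does not obviously reduce the signed strip count to your hook computations, and the double strips with their factors $2^{\delta(\xi)}$ make the bookkeeping genuinely delicate. As written, no negative coefficient is produced for a general $\widetilde{\la/\mu}$ containing the three boxes.

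The paper's proof (Lemma~\ref{lem:coef}) sidesteps the choice of $\nu$ entirely, and this is the idea your approach is missing. Write $Q_{\la/\mu}=\sum_{\nu\in OP(n)}a_\nu p_\nu$ and compute the \emph{sum} of all coefficients rather than any single one: since $s_{(n)}=\sum_{\mu\vdash n}z_\mu^{-1}p_\mu$, one has
\begin{equation*}
\sum_{\nu\in OP(n)}a_\nu \;=\; \bigl\langle Q_{\la/\mu},\,s_{(n)}\bigr\rangle ,
\end{equation*}
and this scalar product equals the coefficient of $x_1^n$ in $Q_{\la/\mu}$, i.e.\ the number of marked shifted tableaux of shape $\widetilde{\la/\mu}$ and content $(n)$. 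The three boxes $(i,j),(i,j+1),(i+1,j+1)$ force this number to be zero: in a filling by letters from $\{1',1\}$, if $(i,j+1)=1$ then $(i+1,j+1)=1$ violates (M2), while if $(i,j+1)=1'$ then $(i,j)=1'$ violates (M3). Since $Q_{\la/\mu}\neq 0$, coefficients that are not all zero but sum to zero must include a negative one, which is exactly non-$p$-positivity. This global pairing with $s_{(n)}$ replaces the shape-dependent cancellation you could not control by a single counting argument that is uniform in $\la$ and $\mu$; if you want to complete your write-up, proving this lemma is the shortest route.
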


%Let $OP(n)$ be the set of all partitions of $n$ into odd parts.
The above proposition is immediate from the following lemma.

\begin{lem}\label{lem:coef}
Let $\widetilde{\la/\mu}$ be a shifted skew diagram of size $n$, {that contains} three boxes $(i,j),~(i,j+1)$, and $(i+1,j+1)$ for some $i$ and $j$. If $Q_{\la/\mu}=\sum_{\la \in OP(n)} a_{\la} p_{\la}$,
then $\sum_{\la \in OP(n)}a_{\la}=0$. 
\end{lem}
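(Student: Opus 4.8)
The plan is to reduce the statement about the sum of the $p$-coefficients to a single evaluation of $Q_{\la/\mu}$, and then to read off that evaluation combinatorially from the marked shifted tableaux.

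First I would observe the general principle that, for any symmetric function written in the power-sum basis, the sum of its coefficients equals its value under the substitution $p_r\mapsto 1$. Concretely, this substitution is realized by specializing to a single variable: setting $x_1=1$ and $x_2=x_3=\dots=0$ gives $p_r(1,0,0,\dots)=1^r=1$ for every $r$, hence $p_\nu(1,0,0,\dots)=1$ for every partition $\nu$. Applying this to $Q_{\la/\mu}=\sum_{\la\in OP(n)}a_\la p_\la$ yields
\[
\sum_{\la\in OP(n)}a_\la=Q_{\la/\mu}(1,0,0,\dots).
\]
Since $Q_{\la/\mu}$ is homogeneous of degree $n$, only the monomial $x_1^n$ survives this substitution, so the evaluation is well defined, and it suffices to prove $Q_{\la/\mu}(1,0,0,\dots)=0$.

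Next I would evaluate the right-hand side directly from the tableau definition $Q_{\la/\mu}=\sum_T x^{c(T)}$. Under $x_1=1$ and $x_{\ge 2}=0$, a monomial $x^{c(T)}$ contributes exactly $1$ when $c_i=0$ for all $i\ge 2$, that is, when every entry of $T$ has absolute value $1$ and so lies in $\{1',1\}$; otherwise it contributes $0$. Therefore $Q_{\la/\mu}(1,0,0,\dots)$ counts the marked shifted tableaux of shape $\widetilde{\la/\mu}$ all of whose entries belong to $\{1',1\}$, and the lemma reduces to the purely combinatorial claim that no such tableau exists.

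Finally I would establish this nonexistence from the three boxes $(i,j)$, $(i,j+1)$, $(i+1,j+1)$ using the order $1'<1$ together with conditions (M2) and (M3). Writing $A,B,C$ for the entries of these boxes, the weak increase along row $i$ gives $A\le B$ and along column $j+1$ gives $B\le C$. If $B=1'$, then $A\le 1'$ forces $A=1'$, placing two marked letters $1'$ in row $i$ and violating (M3); if instead $B=1$, then $C\ge 1$ forces $C=1$, placing two unmarked letters $1$ in column $j+1$ and violating (M2). Each alternative is contradictory, so the count is $0$ and $\sum_\la a_\la=0$. The only genuine subtlety, and the step I would flag as the conceptual crux rather than a routine computation, is the first one: recognizing that the sum of the $p$-coefficients coincides with the single-variable evaluation at $x_1=1$; once that viewpoint is in place, the remainder is a short argument ruling out one forbidden local configuration in a tableau.
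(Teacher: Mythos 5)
Your proof is correct, and it reaches the same combinatorial crux as the paper's proof --- that the configuration of boxes $(i,j)$, $(i,j+1)$, $(i+1,j+1)$ forbids any marked shifted tableau of content $(n)$ --- but it gets there by a genuinely different and more elementary route. The paper computes the Hall inner product $\langle Q_{\lambda/\mu}, s_{(n)}\rangle$ in two ways: combinatorially it vanishes, because $x_1^n$ occurs only in $s_{(n)}$ among Schur functions and no content-$(n)$ tableau exists; and algebraically, via the Murnaghan--Nakayama expansion $s_{(n)}=\sum_{\lambda\vdash n}z_{\lambda}^{-1}p_{\lambda}$ together with the orthogonality $\langle p_{\lambda},p_{\mu}\rangle=\delta_{\lambda\mu}z_{\lambda}$, it equals $\sum_{\lambda\in OP(n)}a_{\lambda}$. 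You instead use the one-variable specialization $x_1=1$, $x_2=x_3=\cdots=0$, under which every $p_{\nu}$ evaluates to $1$, so the sum of the $p$-coefficients is read off directly as the number of tableaux with all entries in $\{1',1\}$; no inner product and no Murnaghan--Nakayama rule are needed, and homogeneity guarantees the evaluation is well defined. What your route buys is economy and self-containedness, and you also spell out in full the $(M2)$/$(M3)$ contradiction (via $A\le B\le C$ in the alphabet $1'<1$) that the paper only states in passing. What the paper's route buys is reusability: the same pairing technique with $s_{(n)}$, and then with $s_{(n-1,1)}$, is exploited again in Lemma~\ref{lem:corners} to extract finer identities such as $\sum_{\lambda}m_1(\lambda)c_{\lambda}=8c(D)$, which the plain specialization at $(1,0,0,\dots)$ cannot detect.
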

\begin{proof}
{We first note that there is no marked shifted tableau of shape $\widetilde{\la/\mu}$ with weight $(n)$, since $\widetilde{\la/\mu}$ contains {two diagonally consecutive boxes} $(i,j)$ and $(i+1,j+1)$. This implies that $\< Q_{\la/\mu},s_{(n)}\ra=0$ since $x_1^n$ can appear only in $s_{(n)}$ among Schur functions.
Due to  Murnaghan-Nakayama rule \cite[Corollary 7.17.5]{S2} the Schur function of one row shape is expanded as a sum of power sum symmetric functions as follows;}
$$s_{(n)}=\sum_{\la \vdash n} z_{\la}^{-1}p_{\la} \, .$$
Therefore, we have 
$$\left\< \sum_{\la \in OP(n)}a_{\la}p_{\la},~\sum_{\la \vdash n}z_{\la}^{-1} p_{\la} \right\ra=\sum_{\la \in OP(n)}a_{\la}=0 \, .$$

\end{proof}

%-------------------------------------------------------------------------------
\subsection{{A class of $p$-positive ribbon Schur $Q$-functions}}
{In Proposition~\ref{Q_not_ribbon}, we showed that only ribbon Schur $Q$-functions can be $p$-positive. In this subsection we construct an infinite family of $p$-positive ribbon Schur $Q$-functions. }

For positive integers $n$ and $k$ such that $ k \leq n$,
we denote by $\triangle_{n,k}$ the ribbon whose shape is {$\widetilde{\la/\mu}$}, where $\la=(n,n-1,\cdots, n-k+1)$ and $\mu=(n-1,n-2,\cdots,n-k+1)$ if $k>1$, $\mu=\emptyset$ if $k=1$. For example, $\triangle_{n,1}$ is the one-row diagram of size $n$,
and $\triangle_{n,n}$ is the one-column diagram of size $n$ (see Figure~\ref{fig:Basicblock}).

\begin{lem} \label{Q_triangle_to_q}
Let $n$ and $k$ be positive integers such that $ k \leq n$.
Then we have the followings.
\begin{enumerate}
\item[(a)] $\displaystyle{\rib_{\triangle_{n,k}}=\sum_{i=0}^{k-1}(-1)^{k+i-1}q_{n-i}\,q_{i}}$.
\item[(b)] $\rib_{\triangle_{n,n-k+1}}=\rib_{\triangle_{n,k}}$.
\end{enumerate}
\end{lem}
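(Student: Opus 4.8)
The plan is to translate the geometric description of $\triangle_{n,k}$ into composition language and then prove (a) by a one-step induction built on the multiplication rule \eqref{eqn:ribbonop}, after which (b) follows almost for free. Reading the shifted skew diagram row by row, $\triangle_{n,k}$ is exactly the ribbon associated to the composition $(1^{k-1},n-k+1)$: a column of height $k$ sharing its bottom box with a row of length $n-k+1$. In particular $\triangle_{n,1}=(n)$, so $\rib_{\triangle_{n,1}}=q_n$, which I will use as the base case.

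For (a) the key step is to apply \eqref{eqn:ribbonop} with $\alpha=(1^{k-1})$ and $\beta=(n-k+1)$. Concatenation gives $(1^{k-1})\cdot(n-k+1)=(1^{k-1},n-k+1)=\triangle_{n,k}$, while near concatenation merges the final $1$ with $n-k+1$, giving $(1^{k-1})\odot(n-k+1)=(1^{k-2},n-k+2)=\triangle_{n,k-1}$. Since the transpose of the single column $(1^{k-1})$ is the single row $(k-1)$, \eqref{eqn:circle} gives $\rib_{(1^{k-1})}=\rib_{(k-1)}=q_{k-1}$. Putting these together yields the recursion
\[
\rib_{\triangle_{n,k}}=q_{k-1}\,q_{n-k+1}-\rib_{\triangle_{n,k-1}}\qquad(k\geq 2),
\]
and an easy induction on $k$, unfolding the alternating signs, produces the closed form $\sum_{i=0}^{k-1}(-1)^{k+i-1}q_{n-i}q_i$.

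For (b) I would argue that transposing the ribbon $(1^{k-1},n-k+1)$ interchanges its height-$k$ column and its length-$(n-k+1)$ row, producing the ribbon $(1^{n-k},k)=\triangle_{n,n-k+1}$; then \eqref{eqn:circle} gives $\rib_{\triangle_{n,k}}=\rib_{\triangle_{n,n-k+1}}$ at once. As a purely algebraic alternative, (b) can be deduced from (a): writing $c_i=q_iq_{n-i}$, the symmetry $c_i=c_{n-i}$ together with the relation $\sum_{r=0}^n(-1)^rc_r=0$ of \eqref{eqn:relation} rewrites $\sum_{i=0}^{k-1}(-1)^ic_i$ in terms of $\sum_{i=0}^{n-k}(-1)^ic_i$, and the two closed forms then match since $(-1)^{n+k}=(-1)^{n-k}$.

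The only delicate point I anticipate is the bookkeeping of $\cdot$ versus $\odot$: one must check carefully that $(1^{k-1})\odot(n-k+1)$ really is $\triangle_{n,k-1}$ (rather than $\triangle_{n,k}$) and that the transpose of $(1^{k-1},n-k+1)$ really is $\triangle_{n,n-k+1}$. Once these composition identities are nailed down, both parts are short and the inductions are routine.
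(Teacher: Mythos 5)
Your proposal is correct, and both of the "delicate points" you flagged do check out: $\triangle_{n,k}$ is indeed the ribbon $(1^{k-1},n-k+1)$, the near concatenation $(1^{k-1})\odot(n-k+1)=(1^{k-2},n-k+2)$ is indeed $\triangle_{n,k-1}$, and ${\triangle_{n,k}}^t=(1^{n-k},k)=\triangle_{n,n-k+1}$. For part (a) you arrive at exactly the recursion the paper uses, $\rib_{\triangle_{n,k}}=q_{k-1}\,q_{n-k+1}-\rib_{\triangle_{n,k-1}}$, but you derive it from the multiplication rule \eqref{eqn:ribbonop} together with $\rib_{(1^{k-1})}=\rib_{(k-1)}=q_{k-1}$ via \eqref{eqn:circle}, whereas the paper expands the determinant $\det(A(\alpha_{n,k+1}))$ from \eqref{eqn:det}; the content is the same, but your route avoids introducing the matrices $A(\alpha)$ altogether. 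The genuine divergence is in part (b): the paper deduces (b) purely algebraically from (a), subtracting the two closed forms and invoking \eqref{eqn:relation} to get $\rib_{\triangle_{n,n-k+1}}-\rib_{\triangle_{n,k}}=(-1)^{n-k}\sum_{i=0}^{n}(-1)^{i}q_{n-i}q_i=0$, while your primary argument is the diagrammatic identity ${\triangle_{n,k}}^t=\triangle_{n,n-k+1}$ combined with transpose-invariance \eqref{eqn:circle}. Your version is shorter, independent of (a), and explains geometrically why the symmetry holds; the paper's version is a one-line verification once (a) is available, and it coincides with the algebraic fallback you sketch. Note also that the paper itself uses the special case ${\triangle_{k,1}}^t=\triangle_{k,k}$ of your transpose observation inside its induction for (a), so your proof of (b) is a natural strengthening of a step the paper already takes.
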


\begin{proof}
Since ${\triangle_{1,n}}^t = \triangle_{n,n}$, by Proposition~\ref{prop:ribbon}\,\eqref{eqn:circle} we have $\rib_{\triangle_{n,1}} = \rib_{\triangle_{n,n}}$. For given $n$ and $k$, we let $\alpha_{n, k}$ be the composition corresponding to the ribbon $\triangle_{n, k}$ and $A(\alpha_{n, k})$ be the $k \times k$ matrix associated to $\alpha_{n, k}$, that is defined in \eqref{eqn:matrix}.

(a) We use an induction on $k$. 
For $k = 1$, by the definition of skew Schur $Q$-functions $\rib_{\triangle_{n,1}}=q_nq_0$.
Suppose that the assertion is true for $k \leq n-2 $.
By Proposition~\ref{prop:ribbon}\,\eqref{eqn:det}, 
we have 
\begin{eqnarray*}
\rib_{\triangle_{n,k+1}}&=& \det(A(\alpha_{n, k+1}))\\
&=& 
(-1)\det(A(\alpha_{n,k})) + q_{n-k}\det(A(\alpha_{k,k})) \\
&=& (-1) \cdot \rib_{\triangle_{n,k}} \, + \, q_{n-k} \cdot \rib_{\triangle_{k,k}}\\
&=&\sum_{i=0}^{k-1}(-1)^{(k+1)+i-1}q_{n-i}\,q_{i} \, + \, q_{n-k}\,q_k \, = \, \sum_{i=0}^{k}(-1)^{(k+1)+i-1}q_{n-i}\,q_{i} \,.
\end{eqnarray*}
Here the fourth equality follows from the induction hypothesis and the observation
$\rib_{\triangle_{k,1}} = \rib_{\triangle_{k,k}}$.

(b)
By using (a) and Proposition~\ref{prop:SchurQ}\,\eqref{eqn:relation}, we have
\begin{eqnarray*}
\rib_{\triangle_{n,n-k+1}}-\rib_{\triangle_{n,k}}
&=& \, \sum_{i=0}^{n-k}(-1)^{n-k+i}q_{n-i}\,q_i \, - \, \sum_{i=0}^{k-1}(-1)^{k+i-1}q_{n-i}\,q_i\\
&=& \,  (-1)^{n-k}\sum_{i=0}^{n}(-1)^{i}q_{n-i}q_i=0 \, .
\end{eqnarray*}
\end{proof}

We state a useful lemma that is an easy consequence of Proposition~\ref{prop:SchurQ}\,\eqref{eqn:q_to_p}.

\begin{lem}\label{lem:simple} 
Let $\la$ be an odd partition and $\mu$ be a partition of a positive integer $n$. If $\la$ is not a refinement of $\mu$, then 
$$\<p_{\la},\,q_{\mu}\ra=0\,.$$ 
\end{lem}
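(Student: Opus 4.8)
The plan is to expand the product $q_\mu=q_{\mu_1}q_{\mu_2}\cdots q_{\mu_{\ell(\mu)}}$ explicitly in the power-sum basis and to observe that every power sum $p_\rho$ occurring in it is indexed by an \emph{odd} partition $\rho$ that refines $\mu$; the orthogonality relation $\langle p_\la,p_\rho\rangle=\delta_{\la\rho}z_\la$ then forces the scalar product to vanish whenever $\la$ is not such a refinement. First I would apply Proposition~\ref{prop:SchurQ}, equation~\eqref{eqn:q_to_p}, to each factor, writing
\[
q_{\mu_j}=\sum_{\nu^{(j)}\in OP(\mu_j)}z_{\nu^{(j)}}^{-1}2^{\ell(\nu^{(j)})}p_{\nu^{(j)}}\,.
\]
Multiplying these together and using $p_{\nu^{(1)}}\cdots p_{\nu^{(\ell(\mu))}}=p_{\rho}$, where $\rho$ is the partition whose parts are the disjoint union of the parts of the $\nu^{(j)}$, gives an expansion $q_\mu=\sum_{\rho}c_{\rho}\,p_{\rho}$, the sum running over all tuples $(\nu^{(1)},\dots,\nu^{(\ell(\mu))})$ with $\nu^{(j)}\in OP(\mu_j)$, and $c_\rho$ being the (positive) sum of the corresponding products $\prod_j z_{\nu^{(j)}}^{-1}2^{\ell(\nu^{(j)})}$.

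The key structural observation is then that each such $\rho$ is an odd partition refining $\mu$: all its parts are odd since each $\nu^{(j)}$ lies in $OP(\mu_j)$, and grouping the parts of $\rho$ according to which factor $\nu^{(j)}$ they came from exhibits $\mu$ as a merging of the parts of $\rho$, i.e. $\rho$ is a refinement of $\mu$. Consequently, if $\la$ is odd but is \emph{not} a refinement of $\mu$, then no tuple produces $\rho=\la$, so the coefficient $c_\la$ is an empty sum and hence $c_\la=0$. Applying bilinearity of the scalar product together with $\langle p_\la,p_\rho\rangle=\delta_{\la\rho}z_\la$ yields $\langle p_\la,q_\mu\rangle=z_\la\,c_\la=0$, as claimed.

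The only point requiring care — and the main (mild) obstacle — is the bookkeeping that identifies the set of partitions $\rho$ appearing in the expansion with precisely the odd refinements of $\mu$: one must check that multiplying the $p_{\nu^{(j)}}$ merges their parts into a refinement of $\mu$, and that the part-collecting operation is compatible with reindexing of the product. Everything else is a direct computation from the defining formula~\eqref{eqn:q_to_p} for $q_n$ and the orthogonality of power sums, so I expect no further difficulty.
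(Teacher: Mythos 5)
Your proof is correct and is precisely the argument the paper intends: the paper states this lemma without proof as an ``easy consequence'' of the expansion $q_n=\sum_{\la\in OP(n)}z_{\la}^{-1}2^{\ell(\la)}p_{\la}$ from Proposition~\ref{prop:SchurQ}, and your step of multiplying these expansions over the parts of $\mu$, observing that every resulting $p_{\rho}$ is indexed by an odd refinement of $\mu$, and then invoking orthogonality of the power sums is exactly that consequence written out in full. There is no gap.
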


Due to Lemma~\ref{Q_triangle_to_q} and Proposition~\ref{prop:SchurQ}\,\eqref{eqn:q_to_p},
we can write {the $p$-expansion of} $\rib_{\triangle_{n,k}}$ by hand
for sufficiently small or large integers $k$.
The cases where $k = 1$ and $k = 3$ are particularly noteworthy
because the ribbon Schur $Q$-functions corresponding to
${\triangle_{n,1}}$ and  ${\triangle_{n,3}}$ are $p$-positive (see Proposition~ \ref{Q_basic}).
Furthermore,
we anticipate that
all ribbons {whose} corresponding ribbon Schur $Q$-function {is} $p$-positive
are constructible from ${\triangle_{n,1}}$ and  ${\triangle_{n,3}}$
(see Theorem~\ref{sufficient} and Conjecture~\ref{necessary}).

\begin{prop} \label{Q_basic}
Let $n$ be a positive integer.
\begin{enumerate}
    \item[(a)] For $n\geq1$, $\displaystyle{\mathfrak r_{\triangle_{n,1}} = \sum_{\la \in OP(n)} z_{\la}^{-1}2^{\ell(\la)}  \,p_{\la}}\,.$
    \item[(b)] For $n\geq 3$, $\displaystyle{\mathfrak r_{\triangle_{n,3}} = \sum_{\la \in OP(n)} c_\lambda \,p_{\la}\,}$,
where 
\begin{equation*}
c_\la \, = \,
\begin{cases}
z_{\la}^{-1}\,2^{\ell(\la)} & \text{if} \ m_1(\la)=0,\\[1.2ex]
0 & \text{if} \ m_1(\la) = 1 \ \text{or} \ 2 ,\\[1.2ex]
     \binom{m_1(\la)-1}{2}\,z_{\la}^{-1}\,2^{\ell(\la)} & \text{if} \ m_1(\la)\geq 3\,.
\end{cases}
\end{equation*}
\end{enumerate}

Consequently, $\rib_{\triangle_{n,1}}$ and $\rib_{\triangle_{n,3}}$ are $p$-positive.
\end{prop}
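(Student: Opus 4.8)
The plan is to derive both expansions from two ingredients already available: the explicit formula for $\rib_{\triangle_{n,k}}$ in terms of the $q_i$'s from Lemma~\ref{Q_triangle_to_q}(a), and the $q$-to-$p$ expansion $q_m=\sum_{\mu\in OP(m)}z_\mu^{-1}2^{\ell(\mu)}p_\mu$ of Proposition~\ref{prop:SchurQ}\,\eqref{eqn:q_to_p}. Part (a) is then immediate: taking $k=1$ in Lemma~\ref{Q_triangle_to_q}(a) gives $\rib_{\triangle_{n,1}}=q_n$, and \eqref{eqn:q_to_p} is precisely the asserted $p$-expansion.

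For part (b), I would first specialize Lemma~\ref{Q_triangle_to_q}(a) to $k=3$ to obtain
$$\rib_{\triangle_{n,3}}=q_n-q_1q_{n-1}+q_2q_{n-2}.$$
Reading off the two smallest cases of \eqref{eqn:q_to_p} (here $OP(1)=\{(1)\}$ and $OP(2)=\{(1,1)\}$) gives $q_1=2p_1$ and $q_2=2p_1^2$, so the right-hand side becomes $q_n-2p_1q_{n-1}+2p_1^2q_{n-2}$. I would then expand $q_n,q_{n-1},q_{n-2}$ by \eqref{eqn:q_to_p} and extract the coefficient of a fixed $p_\la$ with $\la\in OP(n)$.

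The decisive step, and the one I expect to demand the most careful bookkeeping, is to organize this extraction around $m:=m_1(\la)$, the number of parts of $\la$ equal to $1$. Since multiplying by $p_1$ (resp.\ $p_1^2$) appends exactly one (resp.\ two) parts equal to $1$, the term $q_n$ always contributes to $p_\la$, the term $2p_1q_{n-1}$ contributes only when $m\geq1$ (through the unique $\mu=\la\setminus(1)\in OP(n-1)$), and $2p_1^2q_{n-2}$ only when $m\geq2$ (through the unique $\rho=\la\setminus(1,1)\in OP(n-2)$); because the relevant partition is uniquely determined in each case, no multiplicities arise. Writing $\la'$ for $\la$ with all its $1$'s deleted and using the factorizations $z_\la=m!\,z_{\la'}$ and $\ell(\la)=m+\ell(\la')$, I would factor out $A:=z_\la^{-1}2^{\ell(\la)}$ and verify that the three contributions collapse to $A$, $-mA$, and $\binom{m}{2}A$ respectively. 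Summing and invoking the identity $1-m+\binom{m}{2}=\binom{m-1}{2}$ yields
$$c_\la=z_\la^{-1}2^{\ell(\la)}\binom{m_1(\la)-1}{2},$$
and since $\binom{m-1}{2}$ equals $0$ for $m\in\{1,2\}$ and $1$ for $m=3$, this matches the stated piecewise formula exactly.

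Finally, the positivity assertion requires no further argument: in (a) every coefficient $z_\la^{-1}2^{\ell(\la)}$ is strictly positive, and in (b) every $c_\la$ is a nonnegative multiple of $z_\la^{-1}2^{\ell(\la)}$, so both $\rib_{\triangle_{n,1}}$ and $\rib_{\triangle_{n,3}}$ are $p$-positive.
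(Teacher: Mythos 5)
Your proposal is correct and follows essentially the same route as the paper: both start from Lemma~\ref{Q_triangle_to_q}(a) with $k=1$ and $k=3$ together with the expansion \eqref{eqn:q_to_p}, and both extract the coefficient of $p_\la$ organized by $m_1(\la)$, arriving at $1-m_1(\la)+\binom{m_1(\la)}{2}=\binom{m_1(\la)-1}{2}$. The only cosmetic difference is that the paper phrases the coefficient extraction via the scalar product $\<p_\la,\cdot\ra$ and Lemma~\ref{lem:simple}, whereas you multiply out $q_1=2p_1$, $q_2=2p_1^2$ directly; the bookkeeping is identical.
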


\begin{proof}
(a) 
Since $\rib_{\triangle_{n,1}} = q_n$ by Lemma~\ref{Q_triangle_to_q}\,(a), it is immediate from Proposition~\ref{prop:SchurQ}\,\eqref{eqn:q_to_p}.

(b) We recall Lemma~\ref{Q_triangle_to_q}\,(a) and Proposition~\ref{prop:SchurQ}\,\eqref{eqn:q_to_p}; 
\[
\rib_{\triangle_{n,k}}=\sum_{i=0}^{k-1}(-1)^{k+i-1}q_{n-i}q_{i}\qquad \text{and} \qquad q_n=\sum_{\la\in OP(n)}z_{\la}^{-1}2^{\ell(\la)}p_{\la}\,.
\]
Using Lemma~\ref{Q_triangle_to_q}\,(a), for odd partitions $\la$, we have 
\[
z_{\la}c_{\la}=\<p_{\la},\,\rib_{\triangle_{n,3}}\ra = \<p_{\la},\,q_n - q_{n-1}q_1 + q_{n-2}q_2\ra\,
\]
which is determined according to the numbers $m_1(\la)$ and $\ell(\la)$ as follows: 
\begin{itemize}
    \item If $m_1(\la)=0$, then we have
    \[
    z_{\la}c_{\la}=\<p_{\la},\,q_n \ra =2^{\ell(\la)}\,.
    \]
    \item If $m_1(\la)=1$, then we have
    \[
    z_{\la}c_{\la}=\<p_{\la},\,q_n - q_{n-1}q_1 \ra=2^{\ell(\la)}(1-1)=0\,.
    \]
    \item If $m_1(\la)\geq 2$, then we have
    \begin{eqnarray*}
    z_{\la}c_{\la}=\<p_{\la},\,q_n - q_{n-1}q_1 + q_{n-2}q_2\ra         &=&2^{\ell(\la)}\{1-m_1(\la)+m_1(\la)(m_1(\la)-1)/2\}\,\\
    &=&2^{\ell(\la)}(m_1(\la)-1)(m_1(\la)-2)/2\,.
    \end{eqnarray*}
\end{itemize}
In each case, the first equality follows from Lemma~\ref{lem:simple} and the second equality follows from Proposition~\ref{prop:SchurQ} \eqref{eqn:q_to_p}.
\end{proof}

We classify \emph{$p$-positive} ribbons of the form $\triangle_{n,k}$ in the following theorem.

\begin{thm}\label{Q_delta} Let $n$ and $k$ be positive integers such that
$ n \geq k$.
\begin{enumerate}
\item[(a)] For $n\leq 2$, $\rib_{\triangle_{n,k}}$ is $p$-positive. 
\item[(b)] For odd $n\geq 3$, $\rib_{\triangle_{n,k}}$ is $p$-positive if and only if $k\in\{1,\,3,\,n-2,\,n\}$.
\item[(c)] For even $n\geq 4$, $\rib_{\triangle_{n,k}}$ is $p$-positive if and only if $k\in\{1,\,3,\,n/2,\,n/2+1,\,n-2,\,n\}$.
\end{enumerate}
\end{thm}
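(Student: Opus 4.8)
The plan is to convert the positivity question into a statement about a single integer sequence attached to each odd partition. Starting from Lemma~\ref{Q_triangle_to_q}(a), I expand each $q_{n-i}q_i$ by Proposition~\ref{prop:SchurQ}\,\eqref{eqn:q_to_p} and extract the coefficient of $p_\la$ using Lemma~\ref{lem:simple}. The crux is the identity $\<p_\la,\,q_{n-i}q_i\ra = 2^{\ell(\la)}N_\la(i)$, where $N_\la(i)$ counts the sub-multisets of the parts of $\la$ summing to $i$, i.e. $N_\la(i)=[x^i]\prod_j(1+x^j)^{m_j(\la)}$. Writing $\rib_{\triangle_{n,k}}=\sum_{\la\in OP(n)}c_\la p_\la$, this gives $c_\la=\tfrac{2^{\ell(\la)}}{z_\la}\,g_\la(k)$ with $g_\la(k)=\sum_{d=0}^{k-1}(-1)^{k-1-d}N_\la(d)=[x^{k-1}]\frac{\prod_j(1+x^j)^{m_j(\la)}}{1+x}$. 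Hence $\rib_{\triangle_{n,k}}$ is $p$-positive if and only if $g_\la(k)\ge 0$ for every $\la\in OP(n)$.

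Next I reduce the set of partitions that matter. Factoring out the part $1$ as $\prod_j(1+x^j)^{m_j(\la)}=(1+x)^{m_1(\la)}R_\la(x)$, where $R_\la$ collects the parts $\ge 3$ and has nonnegative coefficients, one gets $g_\la(k)=\sum_d\,[x^d]R_\la\cdot\binom{m_1(\la)-1}{k-1-d}$. When $m_1(\la)\ge 1$ every binomial is an ordinary (nonnegative) coefficient, so $g_\la(k)\ge 0$ automatically; thus only partitions into odd parts $\ge 3$ can obstruct positivity. Since no element of $OP(n)$ has all parts $\ge 3$ when $n\le 2$ or $n=4$, parts (a) and the $n=4$ case of (c) follow immediately. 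For the remaining good values I use two facts that are immediate from $\frac{\prod_j(1+x^j)^{m_j(\la)}}{1+x}$ being palindromic of degree $n-1$: the symmetry $g_\la(k)=g_\la(n+1-k)$ (reproving Lemma~\ref{Q_triangle_to_q}(b)) and the recurrence $g_\la(k)+g_\la(k-1)=N_\la(k-1)$. Then $g_\la(1)=g_\la(3)=1$ handles $k=1,3$ (and, by the symmetry, $k=n-2,n$); and for even $n$, combining $g_\la(n/2)=g_\la(n/2+1)$ with the recurrence gives $2g_\la(n/2)=N_\la(n/2)\ge 0$, settling $k=n/2,\,n/2+1$. These recover Proposition~\ref{Q_basic} as special cases.

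For the converse I must produce, for every remaining $k$, a partition into odd parts $\ge 3$ with $g_\la(k)<0$. For odd $n$ the single part $\la=(n)$ gives $g_{(n)}(k)=(-1)^{k-1}$, eliminating all even $k$ at once; the odd $k$ with $5\le k\le(n-1)/2$ are handled by $\la=(n-k-1,\,k-2,\,3)$, where the smallest-part window $1-x+x^2$ isolates a single term and forces $g_\la(k)=(-1)^k=-1$; the central $k=(n+1)/2$ (when odd) is handled by the balanced $\la=(\tfrac{n-3}{2},\tfrac{n-3}{2},3)$, giving $g_\la(k)=-2$; and the upper half follows by the symmetry $g_\la(k)=g_\la(n+1-k)$. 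For even $n$ the symmetry interchanges parities, so even $k$ in the lower half are killed by two-part partitions $\la=(n-k-1,\,k+1)$, while odd $k$ require families of the form $(j,j,\text{large parts})$ and, near the center, partitions mixing the small parts $3$ and $5$.

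The main obstacle is exactly this non-positivity step: one must exhibit a suitable $\la$ for \emph{every} bad $k$, uniformly across residue classes and the full range $4\le k\le n-3$. The clean single-resonance families (a single repeated part, or one tunable part padded by large parts) only reach $k\lesssim n/3$, because the padding parts must exceed $k$; near the center $k\approx n/2$ they run out of room, and one is forced into balanced or mixed-part partitions whose alternating coefficient $[x^{k-1}]\frac{\prod_j(1+x^j)^{m_j(\la)}}{1+x}$ must be shown negative by a more delicate sign analysis. Assembling these constructions into a single argument, and clearing the finitely many small $n$ by direct computation, is where the real work lies; the reductions in the first two paragraphs are what make the analysis tractable.
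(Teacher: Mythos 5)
Your reduction framework is correct, and where it is complete it is genuinely cleaner than the paper's argument. The identity $\<p_\la,\,q_{n-i}q_i\ra = 2^{\ell(\la)}N_\la(i)$ (which follows from Proposition~\ref{prop:SchurQ}\,\eqref{eqn:q_to_p} and Lemma~\ref{lem:simple} by the computation $z_\la/(z_\mu z_\nu)=\prod_j\binom{m_j(\la)}{m_j(\nu)}$) correctly packages the paper's repeated scalar-product calculations into $z_\la c_\la = 2^{\ell(\la)}g_\la(k)$ with $g_\la(k)=[x^{k-1}]\prod_j(1+x^j)^{m_j(\la)}/(1+x)$. Three of your consequences have no counterpart in the paper and are improvements: the observation that $m_1(\la)\geq 1$ forces $g_\la(k)\geq 0$, so only partitions into odd parts $\geq 3$ can obstruct positivity (this disposes of part (a) and of $n=4$ instantly); the palindromicity proof of the symmetry $g_\la(k)=g_\la(n+1-k)$, which reproves Lemma~\ref{Q_triangle_to_q}\,(b); and the identity $2g_\la(n/2)=N_\la(n/2)\geq 0$, which replaces the paper's argument for $k=n/2,\,n/2+1$ via $\rib_{\triangle_{2k,k}}=q_k^2/2$ and Proposition~\ref{prop:ribbon}\,\eqref{eqn:ribbonsq}. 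Your odd-$n$ witnesses also all check out, including the parity subtlety that $(\tfrac{n-3}{2},\tfrac{n-3}{2},3)$ is a partition into odd parts exactly when $k=(n+1)/2$ is odd, which is the only case where you need it. (The paper instead uses the single family $\la((k-2)^2(n-2k+4)^1)$ for all odd $k\leq\lceil n/2\rceil$, but your two families cover the same ground.)

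The genuine gap is the ``only if'' direction of part (c) for odd $k$ with $5\leq k\leq n/2-1$: you never exhibit witnesses there, only gesture at ``families of the form $(j,j,\text{large parts})$'' and ``partitions mixing the small parts $3$ and $5$,'' and you yourself concede this is ``where the real work lies.'' As written, the proposal therefore does not prove the theorem. Moreover, your diagnosis that clean families stall at $k\lesssim n/3$ and that the center requires a delicate sign analysis is a misreading of the difficulty: the paper's single witness $\la=\la(n-k-4,\,k-2,\,3,\,3)$ covers the entire range uniformly, since $k\leq n/2-1$ gives $n-k-4\geq k-2\geq 3$ with all parts odd. In your own framework the verification is short: $g_\la(k)=[x^{k-1}](1+x^{n-k-4})(1+x^{k-2})(1-x+x^2+x^3-x^4+x^5)$, and for $k>5$ the only contributions to $[x^{k-1}]$ are $-1$ from $x^{k-2}\cdot(-x)$ plus another $-1$ from $x^{n-k-4}\cdot(-x)$ when $n=2k+2$, so $g_\la(k)\in\{-1,-2\}$; for $k=5$ one similarly gets $-2$, or $-3$ when $n=12$. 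So the hole is fillable by a two-line window computation in your setup, but it must actually be filled for the classification to be complete.
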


\begin{proof} By Lemma~\ref{Q_triangle_to_q} (a), $\triangle_{1,1}=q_1=2p_1$ and $\triangle_{2,1}=\triangle_{2,2}=q_2=2 p_1^2$. Hence, (a) is proved.\\
 For $n\geq 3$, due to Lemma~\ref{Q_triangle_to_q}\,(b) and Proposition~\ref{Q_basic}, it suffices to show that the assertion is true for $1<k\leq \lceil n/2 \rceil$ and $k\neq3$. To show that a ribbon Schur $Q$-function $\rib_{\triangle_{n,k}}$ is not $p$-positive, we show that the value $z_{\la}c_\la$ is negative for some specific $\la$ when we let 
$\rib_{\triangle_{n,k}} = \sum_{\lambda \in OP(n)} c_\la \, p_\la$.
 To calculate the value $z_{\la}c_{\la}$, we again use Lemma~\ref{Q_triangle_to_q}\,(a), Lemma~\ref{lem:simple}, and Proposition~\ref{prop:SchurQ}\,\eqref{eqn:q_to_p} as in the proof of Proposition~\ref{Q_basic}\,(b).

(b) Let $n\geq3$ be an odd integer. We first show that for even $k$, the value $z_{(n)}c_{(n)}$ is negative so that $\rib_{\triangle_{n,k}}$ is not $p$-positive;
\[
z_{(n)}c_{(n)}=\< p_{(n)},\,\rib_{\triangle_{n,k}}\ra=\< p_{(n)},\,(-1)^{k-1}q_n\ra
=\< p_{(n)},\,(-1)^{k-1}z_{(n)}^{-1}2^1p_{(n)}\ra=(-1)^{k-1}2 \, =-2 \,.
\] 
We now assume that $k\geq 5$ is odd and show that $z_{\la}c_{\la}$ is negative for $\la=\la((k-2)^{2} \, (n-2k+4)^1)$.

If $k-2>n-2k+4$, then $\la=((k-2)^{2} \, (n-2k+4)^1)$ and we have 
\[
z_{\la}c_{\la}=\<p_{\la},\,\rib_{\triangle_{n,k}} \ra
       =\<p_{\la},\,q_n-q_{2k-4}q_{n-2k+4}-q_{n-k+2}q_{k-2}\ra=2^3(1-1-2)=-16\,.
\]

If $k-2 \leq n-2k+4$, then $\la=((n-2k+4)^1\,(k-2)^2)$ and we have 
\[
z_{\la}c_{\la}=\<p_{\la},\,\rib_{\triangle_{n,k}} \ra
       =\<p_{\la},\,q_n-q_{n-k+2}q_{k-2}\ra=
       \begin{cases}
       2^3(1-3)=-16 & \text{if } k-2=n-2k+4,\\[2ex]
       2^3(1-2)=-8 & \text{ otherwise.}
       \end{cases}
\]

(c) We first show that $\rib_{\triangle_{2k,k}}$ is $p$-positive. Let $\alpha=(1^k)$ be a composition of $k$. Since $\rib_{\alpha}=q_{k}$ and $\triangle_{2k,k}=\alpha \odot \alpha^t$, it follows from Proposition~\ref{prop:ribbon}\,\eqref{eqn:ribbonsq} that $\rib_{\triangle_{2k,k}}=q_k^2/2$ which is $p$-positive.

To complete the proof, we have to show that $\rib_{\triangle_{n,k}}$ is not $p$-positive for $1 < k < n/2$ and $k \neq 3$.\\ 
For even $k$, let $\la=(n/2,\,n/2)$ if $n/2$ is odd and $\la=(n/2+1,\,n/2-1)$ otherwise, then the value
\[
z_{\la}c_{\la}=\<p_{\la},\,\rib_{\triangle_{n,k}} \ra
       =\<p_{\la},\,-q_n\ra
\]
is negative in both cases.\\
For odd $k$, we show that $z_{\la}c_{\la}$ is negative for $\la=(n-k-4,k-2,3,3)$.
Note that $n-k-4 \geq k-2 \geq 3$.

\begin{itemize}
    \item If $k=5$, then $\la=(n-9,3,3,3)$ and the value 
    \[
       z_{\la}c_{\la}=\<p_{\la},\,\rib_{\triangle_{n,k}} \ra
       =\<p_{\la},\,q_n-q_{n-3}q_{3}\ra
    \]
    is equal to $2^4(1-4)=-48$ if $n=12$ and $2^4(1-3)=-32$ otherwise. 
    \item If $k>5$, then we have 
    \[
       z_{\la}c_{\la}=\<p_{\la},\,\rib_{\triangle_{n,k}} \ra
       =\<p_{\la},\,q_n-q_{n-3}q_{3}+q_{n-6}q_{6}-q_{n-k+2}q_{k-2}\ra
    \]
    is equal to $2^4(1-2+1-2)=-32$ if $n-k-4=k-2$ and $2^4(1-2+1-1)=-16$ otherwise.
\end{itemize}

\end{proof}

We classified all $p$-positive ribbons of the form $\triangle_{n,k}$, and Proposition~\ref{prop:ribbon} \eqref{eqn:circle} tells us that their transpose, antipodal rotation, and antipodal rotation of transpose are all $p$-positive.
For each $n \geq 1$, we define $\mathfrak{B}_n$, called the set of \emph{basic blocks of size $n$}, by
\[
\mathfrak{B}_n := \left\{\triangle_{n,1}, \, {\triangle_{n,1}}^t, \, \triangle_{n,3}, \, {\triangle_{n,3}}^t, \, {\triangle_{n,3}}^\circ, \,
( {\triangle_{n,3}}^t)^\circ  \right\}\, ,
\]
and $\mathfrak{B}$ to be the union of all $\mathfrak{B}_n$.
We call $\mathfrak{B}$ the set of {\it basic blocks}.
Note that if $n \leq 3$, then $\mathfrak{B}_n = \{ \triangle_{n,1}, \, {\triangle_{n,1}}^t \}$,
and all ribbons in $\mathfrak{B}$ are $p$-positive.

\begin{figure}[h]
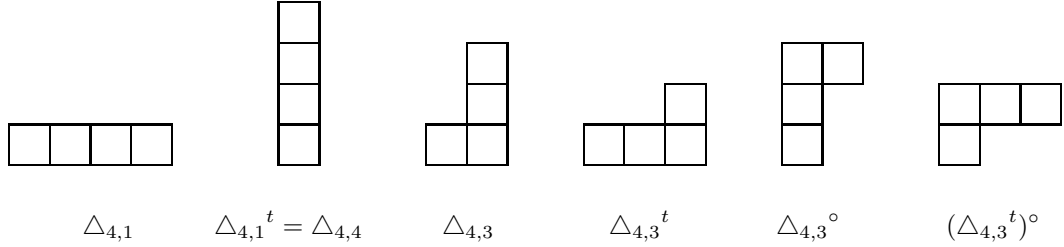

\centering
\small{
\begin{ytableau}
\none&\none&\none&\none\\
\none&\none&\none&\none\\
\none&\none&\none&\none\\
~&~&~&~
\end{ytableau}
\qquad \,\, \, \,
\begin{ytableau}
~\\
~\\
~\\
~
\end{ytableau}
\qquad \,\, \, \,
\begin{ytableau}
\none&\none\\
\none&~\\
\none&~\\
~&~
\end{ytableau}
\qquad \,
\begin{ytableau}
\none&\none&\none\\
\none&\none&\none\\
\none&\none&~\\
~&~&~
\end{ytableau}
\qquad \,
\begin{ytableau}
\none&\none\\
~&~\\
~\\
~
\end{ytableau}
\qquad \,
\begin{ytableau}
\none&\none&\none\\
\none&\none&\none\\
~&~&~\\
~
\end{ytableau}\\[2ex]
}

\[
\qquad\triangle_{4,1} \qquad \quad  {\triangle_{4,1}}^t=\triangle_{4,4} \qquad\quad   \triangle_{4,3} \qquad\qquad\,\, {\triangle_{4,3}}^t \qquad\qquad {\triangle_{4,3}}^\circ \qquad \qquad ( {\triangle_{4,3}}^t)^\circ
\]
    \caption{The elements of the set $\mathfrak{B}_4$ of basic blocks of size $4$.}  \label{fig:Basicblock}
\end{figure}

{We state the main theorem of this section, in which an infinite family of $p$-positive ribbons is constructed from our basic blocks.}

\begin{thm}\label{sufficient}
Let $\mathfrak{B}$ be the set of basic blocks.
If a ribbon $D$ is of the form
{\[
D = \alpha^{(k)} \bullet \cdots \bullet \alpha^{(2)} \bullet \alpha^{(1)} \bullet B
\]}
for some $B \in \mathfrak{B}$ and {a positive integer $k$ with compositions $\alpha^{(i)}\in \{ (2),  (1,1)\}$} for each $1 \leq i \leq k$,
then $\rib_D$ is $p$-positive.
\end{thm}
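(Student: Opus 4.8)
The plan is to prove Theorem~\ref{sufficient} by induction on $k$, with the squaring identity \eqref{eqn:ribbonsq} serving as the engine of the argument. The decisive preliminary observation is that for both admissible compositions the operation $\alpha \bullet D$ collapses to a single (near) concatenation of $D$ with its transpose. Indeed, writing $(2) = \square \odot \square$ and $(1,1) = \square \cdot \square$, both of even size, the definition of the composition of transposition gives
\[
(2) \bullet D = D \odot D^t, \qquad (1,1) \bullet D = D \cdot D^t.
\]
Feeding each of these into \eqref{eqn:ribbonsq} (with $D$ in the role of $\alpha$) yields, in both cases,
\[
\rib_{\alpha \bullet D} = \tfrac{1}{2}\,\rib_{D}^{\,2}\qquad\text{for }\alpha\in\{(2),(1,1)\}.
\]
This is the crux: irrespective of which of the two compositions we apply, passing from $D$ to $\alpha \bullet D$ simply squares the ribbon Schur $Q$-function (up to the harmless factor $\tfrac12$).

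Next I would record the elementary fact that $p$-positivity is preserved under multiplication: if $f=\sum_\la a_\la p_\la$ and $g=\sum_\mu b_\mu p_\mu$ with all $a_\la,b_\mu\geq 0$, then $fg=\sum_{\la,\mu} a_\la b_\mu\, p_{\la\cup\mu}$ is again a nonnegative combination of power sums (and the parts stay odd, so we remain in $\Gamma$). In particular, if $\rib_D$ is $p$-positive, then so is $\tfrac12\rib_D^{\,2}$.

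For the base case I would verify that $\rib_B$ is $p$-positive for every $B\in\mathfrak{B}$. Proposition~\ref{Q_basic} establishes the $p$-positivity of $\rib_{\triangle_{n,1}}$ and $\rib_{\triangle_{n,3}}$ directly, and Proposition~\ref{prop:ribbon}\,\eqref{eqn:circle} shows that transpose and antipodal rotation leave the ribbon Schur $Q$-function unchanged; hence each of the six shapes comprising $\mathfrak{B}_n$ has the same ribbon Schur $Q$-function as $\triangle_{n,1}$ or $\triangle_{n,3}$, and is therefore $p$-positive. With the base case secured, the induction is immediate: setting $D_0 = B$ and $D_j = \alpha^{(j)} \bullet D_{j-1}$, the preliminary observation gives $\rib_{D_j} = \tfrac12\rib_{D_{j-1}}^{\,2}$, which is $p$-positive whenever $\rib_{D_{j-1}}$ is, so $\rib_{D_k} = \rib_D$ is $p$-positive.

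I do not anticipate a genuine obstacle in this argument; the entire theorem rests on the single structural coincidence that both legal compositions $(2)$ and $(1,1)$ manufacture exactly the shape $D\odot D^t$ or $D\cdot D^t$ whose ribbon Schur $Q$-function is controlled by \eqref{eqn:ribbonsq}. The only place demanding mild care is the base-case bookkeeping—confirming via \eqref{eqn:circle} that all six basic-block shapes in $\mathfrak{B}_n$ reduce to the $p$-positive $\triangle_{n,1}$ or $\triangle_{n,3}$, and correctly handling the degenerate cases $n\leq 3$ where $\mathfrak{B}_n$ consists of only the two shapes $\triangle_{n,1}$ and $\triangle_{n,1}^t$.
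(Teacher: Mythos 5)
Your proof is correct and follows essentially the same route as the paper's: the same key observation that $(2)\bullet D = D\odot D^t$ and $(1,1)\bullet D = D\cdot D^t$, the same use of \eqref{eqn:ribbonsq} to get $\rib_{\alpha^{(i)}\bullet D_{i-1}} = \tfrac12\rib_{D_{i-1}}^{\,2}$, and the same induction, which the paper summarizes in closed form as $\rib_D = \tfrac{1}{2^k}(\rib_B)^{2^k}$. Your added bookkeeping (closure of $p$-positivity under products, and the reduction of all six basic-block shapes to $\triangle_{n,1}$ or $\triangle_{n,3}$ via \eqref{eqn:circle}) just makes explicit what the paper leaves implicit.
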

\begin{proof}
We first observe that
$(2)\bullet B= B \odot B^t$ and $(1,1)\bullet B= B \cdot B^t$.
Hence,
if we let {$D_i=\alpha^{(i)}\bullet\cdots\bullet \alpha^{(1)}\bullet B$} for $1 \leq i \leq k$,
then we can easily see that $\mathfrak r_{D_{i+1}}=\frac{1}{2} \mathfrak r_{D_i}^2$ for $ 0 \leq i \leq k-1$
by Proposition~\ref{prop:ribbon}\,\eqref{eqn:ribbonsq}.
It follows that $\mathfrak r_{D}=\frac{1}{2^k} (\mathfrak r_B)^{2^k}$,
and therefore $\mathfrak r_{D}$ is $p$-positive.
\end{proof}

We remark that we do not include $\triangle_{n,\frac{n}{2}}$, ${\triangle_{n,\frac{n}{2}}}^t(=\triangle_{n,\frac{n}{2}+1})$, ${\triangle_{n,\frac{n}{2}}}^{\circ}$, and
${({\triangle_{n,\frac{n}{2}}}^t)}^{\circ}$ in $\mathfrak{B}$
since all of them can be obtained from basic blocks by taking a composition of transposition as in Theorem~\ref{sufficient}.
For instance,
$\triangle_{n,\frac{n}{2}} = (2) \bullet {\triangle_{\frac{n}{2},1}}^t$.

%------------------------------------------------------------------------
\subsection{A conjecture on the classification of $p$-positive ribbons}

Our conjecture is that the converse of Theorem~\ref{sufficient} is also true. 
That is, for connected $\widetilde{\la/\mu}$, $Q_{\la/\mu}$ is $p$-positive if and only if $\widetilde{\la/\mu}$ is a ribbon obtained by applying finite number of  compositions of transposition by either $(1, 1)$ or $(2)$ to a basic block.
Remember that $\mathfrak{B}$ be the set of basic blocks.

\begin{conj}\label{necessary}
{For a connected $D$,} if $\rib_D$ is $p$-positive 
then either $D \in \mathfrak{B}$ or
\[
D = \alpha^{(k)} \bullet \cdots \bullet \alpha^{(2)} \bullet \alpha^{(1)} \bullet B
\]
for some $B \in \mathfrak{B}$ and {a positive integer $k$ with compositions $\alpha^{(i)}\in \{ (2),  (1,1)\}$} for $1 \leq i \leq k$.
\end{conj}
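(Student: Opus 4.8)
The plan is to prove the converse of Theorem~\ref{sufficient} by induction on $n=|D|$, running the sufficiency argument in reverse. By Proposition~\ref{Q_not_ribbon} we may assume $D$ is a connected ribbon, which we identify with its composition $\alpha$; since $\mathfrak{r}_\alpha=\mathfrak{r}_{\alpha^t}=\mathfrak{r}_{\alpha^\circ}$ by \eqref{eqn:circle}, it suffices to treat $\alpha$ up to the group generated by transpose and antipodal rotation. Encoding $\alpha$ by its step-word $w(\alpha)\in\{\odot,\cdot\}^{\,n-1}$ as in Section~\ref{composition of transposition}, one checks that antipodal rotation reverses $w$, transpose reverses $w$ and swaps $\odot\leftrightarrow\cdot$, and concatenation/near concatenation insert a single $\cdot$ or $\odot$ between words. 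Consequently the target family is exactly the set of ribbons whose word is an \emph{iterated folding} $w\mapsto w\,x\,\overline{w^{\,R}}$ (with $x\in\{\odot,\cdot\}$, $\overline{\phantom{w}}$ the letter-swap, and $R$ reversal) started from a basic-block word. The squaring identity $\mathfrak{r}_\beta^{\,2}=2\mathfrak{r}_{\beta\cdot\beta^t}=2\mathfrak{r}_{\beta\odot\beta^t}$ of \eqref{eqn:ribbonsq} is precisely the function-level shadow of a single such folding, so the entire problem is to show that $p$-positivity forces this self-reflective word structure.

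First I would extract necessary conditions on $\alpha$ by reading off selected coefficients of the $p$-expansion, exactly in the style of the proof of Theorem~\ref{Q_delta}. Writing $\mathfrak{r}_\alpha$ in the $q$-basis via \eqref{eqn:ribbon} and then applying \eqref{eqn:q_to_p} together with Lemma~\ref{lem:simple} to compute $\langle p_\lambda,\mathfrak{r}_\alpha\rangle$, one finds for example that the coefficient of $p_{(n)}$ equals $(-1)^{\ell(\alpha)+1}\,2/n$, so that $p$-positivity already forces $\ell(\alpha)$ to be odd whenever $n$ is odd; finer choices of $\lambda$ with a few small parts (as in Theorem~\ref{Q_delta}(b),(c)) should forbid the short ``non-palindromic runs'' in $w(\alpha)$ that are incompatible with a folding. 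These computations are routine but load-bearing: they supply the base cases (the basic blocks themselves, via Theorem~\ref{Q_delta} and Proposition~\ref{Q_basic}) and, I expect, enough local constraints to guarantee that any $p$-positive $\alpha$ which is \emph{not} a basic block has even size and nontrivial self-reflective structure.

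The structural core is to upgrade these local constraints to a \emph{global} perfect-power statement: every $p$-positive $\mathfrak{r}_\alpha$ equals $2^{-k}\,\mathfrak{r}_B^{\,2^k}$ for some basic block $B$ and some $k\geq 0$. Here I would work inside $\Gamma=\mathbb{Q}[q_1,q_3,q_5,\dots]$, which is a polynomial ring and hence a UFD, and combine (i) the product rule \eqref{eqn:ribbonop}, which controls how $\mathfrak{r}_\alpha$ factors through (near) concatenations, with (ii) the theory of equalities among ribbon Schur $Q$-functions governed by \eqref{eqn:circle} and the compositions of transposition of \cite{BW}, which describes the fibers of the map $\alpha\mapsto\mathfrak{r}_\alpha$ on ribbons. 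Granting the perfect-power statement, the fiber description then pins down the shape: a ribbon $\alpha$ with $\mathfrak{r}_\alpha=2^{-k}\,\mathfrak{r}_B^{\,2^k}$ must be obtained from $B$ by exactly $k$ foldings by $(1,1)$ or $(2)$, because each factor of $2$ and each doubling of degree corresponds, by \eqref{eqn:ribbonsq}, to a single application of $(1,1)\bullet$ or $(2)\bullet$ and nothing else. This gives $\alpha=\alpha^{(k)}\bullet\cdots\bullet\alpha^{(1)}\bullet B$ up to the transpose/rotation symmetry, as claimed.

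The hard part is proving the perfect-power dichotomy in the third step, and this is where I expect the real difficulty — and the reason the statement is still only a conjecture — to lie. A naive induction that passes from $\mathfrak{r}_\alpha=\tfrac12\mathfrak{r}_\beta^{\,2}$ to ``$\mathfrak{r}_\beta$ is $p$-positive'' is \emph{not} available, since $p$-positivity of a square does not imply $p$-positivity of its square root (squaring can cancel negative $p$-coefficients); this is exactly why I would aim for the global factorization directly rather than peeling off one fold at a time. Concretely, the remaining obstacle is to show that a $p$-positive $\mathfrak{r}_\alpha$ cannot be ``irreducible'' in $\Gamma$ except on the finite list of basic blocks — that is, to rule out a $p$-positive ribbon function whose UFD factorization is not a single perfect $2^k$-th power of a basic-block function. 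I would attack this by matching the factorization type of $\mathfrak{r}_\alpha$ in the UFD $\Gamma$ against the multiplicativity \eqref{eqn:ribbonop} and \eqref{eqn:det}, and by pushing the coefficient-sign computations of the second step to detect any deviation of $w(\alpha)$ from the iterated-folding pattern; controlling all such deviations simultaneously is the principal gap that a full proof must close.
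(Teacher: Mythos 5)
The statement you set out to prove is Conjecture~\ref{necessary}; the paper itself contains no proof of it, only Sage verification up to $n=12$ and partial necessary conditions (Lemma~\ref{lem:corners}, Corollary~\ref{cor:many_corners}, and the two theorems ruling out $p$-positivity for odd-size ribbons of even length, resp.\ of odd length under head/tail-length parity hypotheses). Your proposal is likewise not a proof, and you say so yourself: the ``perfect-power dichotomy'' --- that every $p$-positive $\mathfrak{r}_\alpha$ equals $2^{-k}\mathfrak{r}_B^{\,2^k}$ for a basic block $B$ --- is not an intermediate lemma but is essentially the conjecture itself, transported from diagrams to symmetric functions. What you have written is a reduction of the conjecture to itself plus further unproven input, so the genuine gap is the entire structural core of the argument.

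Beyond the gap you concede, two of your steps would fail as written. First, the UFD framing buys nothing: $2$ is a unit in $\Gamma=\mathbb{Q}[q_1,q_3,q_5,\dots]$, so unique factorization cannot ``count'' the factors $2^{-k}$, and you would separately need irreducibility of $\mathfrak{r}_B$ in $\Gamma$ for basic blocks $B$, which is not established anywhere. Second, your final step --- passing from the identity $\mathfrak{r}_\alpha=2^{-k}\mathfrak{r}_B^{\,2^k}$ of symmetric functions to the diagrammatic conclusion that $\alpha$ arises from $B$ by exactly $k$ foldings by $(1,1)$ or $(2)$ --- requires a complete description of the fibers of $\alpha\mapsto\mathfrak{r}_\alpha$, i.e., the classification of all equalities among ribbon Schur $Q$-functions. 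That classification is itself an open problem (the paper only quotes the partial facts of Proposition~\ref{prop:ribbon} from \cite{BW}), so this step rests on a second unproven assertion. The salvageable part of your plan is the coefficient extraction in your second step: it is sound, and it reproduces exactly the paper's partial evidence --- your observation that $c_{(n)}=(-1)^{\ell(\alpha)+1}\,2/n$ forces odd length when $n$ is odd is Lemma~\ref{lem:corners}\,(b), which gives the paper's theorem for even-length ribbons of odd size --- but such finitely many coefficient inequalities fall well short of forcing the global self-reflective word structure, which is precisely why the statement remains a conjecture.
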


We verified {the conjecture for }all cases up to $n = 12$ via sage computation, and in what follows
we list a few evidences conversing to support Conjecture~\ref{necessary}.
To begin with, we recall that
for each ribbon $D$
there are four kinds of equivalent ribbons in the sense that
\[
\rib_D = \rib_{D^t} = \rib_{D^\circ} = \rib_{(D^t)^\circ} \,.
\]
We also recall that a ribbon $D$ corresponds {to} the composition $\alpha(D) = ( \alpha_1(D),  \alpha_2(D), \cdots, \alpha_\ell(D) )$ whose $i$th part is equal to the number of boxes on the $i$th row of $D$.
%Let $r_i(D)$ be the number of boxes on $i$th row of D for $1 \leq i \leq \ell(D)$.
Note that if $\alpha_1(D) >1$ then $\alpha_1((D^t)^\circ) = 1$.
Since our goal is to determine {the} $p$-positivity of $\rib_D$,
it is enough to consider the ribbons with $\alpha_1(D) = 1$.
Hence, from now on,
we always assume that a ribbon has a only one box on the first row.
We say that a box $(i,j)$ {in $D$} is a {\it corner}  of $D$ if the box $(i-1, j)$ is contained in $D$ but $(i+1, j)$ is not in $D$. Let $c(D)$ be the number of all corners in $D$.

We summarize some identities that the coefficients in the $p$-expansion of ribbon Schur functions satisfy.

\begin{lem}\label{lem:corners}
Let $D$ be a connected ribbon of size $n$ with length $\ell$, and let $\rib_D = \sum_{\la \in OP(n)} c_\la \, p_\la$.
Then we have {the} followings.
\begin{enumerate}
\item[(a)] ${\sum_{\lambda \in OP(n)} c_\lambda = 2}$.
\item[(b)] {If $n$ is odd}, then $c_{(n)} = (-1)^{\ell+1} \,  2/n$.
\item[(c)] If $\alpha_1(D)=1$ and $\alpha_\ell (D) > 1$, then
 $\sum_{\lambda \in OP(n)}m_1(\lambda)c_{\lambda} = 8 c(D).$
\item[(d)] If $\alpha_1(D)=\alpha_\ell (D) = 1$, then
 $\sum_{\lambda \in OP(n)}m_1(\lambda)c_{\lambda}= 8c(D)- 4$ for $n
 \geq2$.
\end{enumerate}
\end{lem}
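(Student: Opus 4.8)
The plan is to express each of the four sums as a scalar product $\langle \rib_D, f\rangle$ for a suitable $f$ and then evaluate it either through the expansion \eqref{eqn:ribbon} or by counting marked shifted tableaux, in the spirit of the proof of Lemma~\ref{lem:coef}. For (a), since $s_{(n)}=h_n=\sum_{\lambda\vdash n}z_\lambda^{-1}p_\lambda$ while $\rib_D=\sum_{\lambda\in OP(n)}c_\lambda p_\lambda$ has only odd-partition terms, orthogonality of the $p_\lambda$ gives $\sum_{\lambda\in OP(n)}c_\lambda=\langle \rib_D,s_{(n)}\rangle$, which is the coefficient of $x_1^n$ in $\rib_D$, i.e.\ the number of marked shifted tableaux of shape $D$ with content $(n)$. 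For (b), $c_{(n)}=z_{(n)}^{-1}\langle \rib_D,p_n\rangle$ with $z_{(n)}=n$; I would read off $\langle \rib_D,p_n\rangle$ from \eqref{eqn:ribbon}, observing that among all coarsenings $\gamma\succcurlyeq\alpha(D)$ only the one-part coarsening $\gamma=(n)$ can contribute the single power sum $p_n$, since a product of two or more $q_{\gamma_i}$ with $\gamma_i\ge1$ expands into power sums indexed by partitions of length $\ge2$. Thus only the term $(-1)^{\ell}(-1)^{1}q_n$ survives, and the coefficient of $p_n$ in $q_n$ is $z_{(n)}^{-1}2^{1}=2/n$ by \eqref{eqn:q_to_p}, giving $c_{(n)}=(-1)^{\ell+1}\,2/n$; the hypothesis that $n$ is odd is exactly what guarantees $(n)\in OP(n)$.

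The engine behind the counting is a forced-filling argument for (a). In a content-$(n)$ tableau every entry lies in $\{1',1\}$, and weak monotonicity together with (M2)--(M3) forces every non-leftmost box of a row to be $1$ and every non-bottommost box of a column to be $1'$. Writing $[a_i,b_i]$ for the columns occupied by row $i$, a connected ribbon has consecutive rows overlapping in the single column $a_i=b_{i+1}$, so the leftmost box $(i,a_i)$ of row $i$ sits directly above the rightmost box $(i+1,b_{i+1})$ of row $i+1$. I would check that every box is leftmost in its row or bottommost in its column, so no box is forced to be both $1$ and $1'$; and that the unique box which is simultaneously leftmost in its row and bottommost in its column is $(\ell,a_\ell)$. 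That single box is free, all others are determined, so the count is $2^1=2$, proving (a).

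For (c) and (d) I would reduce to the same kind of count. Since $h_1=p_1$ and $\sum_{\lambda\vdash n}m_1(\lambda)z_\lambda^{-1}p_\lambda=p_1h_{n-1}=h_{(n-1,1)}$, the orthogonality argument above yields $\sum_{\lambda\in OP(n)}m_1(\lambda)c_\lambda=\langle \rib_D,h_{(n-1,1)}\rangle$, which is the coefficient of $x_1^{n-1}x_2$ in $\rib_D$, i.e.\ the number of marked shifted tableaux of shape $D$ with content $(n-1,1)$. In such a tableau there is exactly one box $b$ with $|a|=2$, and weak monotonicity forces $b$ to be rightmost in its row and bottommost in its column. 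Under the running assumption $\alpha_1(D)=1$ I would show these \emph{eligible} positions are precisely the corners of $D$: a corner $(i,j)$ forces $j=b_i$ because consecutive rows share a single column, and conversely an eligible box has a box above it unless it lies in the first row, which is excluded by $\alpha_1(D)=1$. Fixing $b$, the remaining $n-1$ boxes are filled from $\{1',1\}$, and the forced-filling count of (a) applied to $D\setminus\{b\}$ gives $2^{g}$ fillings, where $g$ is the number of boxes of $D\setminus\{b\}$ that are leftmost in their row and bottommost in their column; with the two choices $\{2',2\}$ for $b$ this produces $2^{1+g}$ tableaux.

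The crux, and the step I expect to be the main obstacle, is evaluating $g=g(D\setminus\{b\})$ for each corner $b$, where the bookkeeping created by length-one rows (which make columns span more than two rows and complicate the notion of ``bottommost'') needs care. The clean outcome I would establish is that deleting a corner $b=(i,b_i)$ liberates exactly the box $(i-1,b_i)=(i-1,a_{i-1})$ directly above it as a new leftmost-and-bottommost box, leaves the original free box $(\ell,a_\ell)$ of part (a) intact, and creates no others, so that $g=2$ and $b$ contributes $2^{3}=8$. In case (c), where $\alpha_\ell(D)>1$, the bottom corner $(\ell,b_\ell)$ is distinct from $(\ell,a_\ell)$, so every corner contributes $8$ and the total is $8c(D)$. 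In case (d), where $\alpha_\ell(D)=1$, the bottom corner coincides with $(\ell,a_\ell)$, and deleting it destroys the original free box rather than adding to it, forcing $g=1$; this one corner contributes only $2^{2}=4$ while every other corner still contributes $8$, so the total is $8(c(D)-1)+4=8c(D)-4$. I would finish by verifying the small and boundary configurations and confirming that the one-row ribbon, having $\alpha_1=n>1$, lies outside (c) and (d), so that the corner/eligible-box dictionary and the value of $g$ hold in all cases.
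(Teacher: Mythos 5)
Your proposal is correct and takes essentially the same route as the paper's proof: parts (a), (c), (d) come from pairing $\rib_D$ against $s_{(n)}=h_n$ and $h_{(n-1,1)}=s_{(n)}+s_{(n-1,1)}$ and counting marked shifted tableaux of content $(n)$ and $(n-1,1)$ via corners, while (b) follows from \eqref{eqn:ribbon} and \eqref{eqn:q_to_p}. Your forced-filling analysis and the $g=2$ versus $g=1$ bookkeeping is simply a more detailed justification of the paper's factor-of-two counts (choice of mark, the box above the chosen corner, and the free box in the last row, the last of which is absorbed when $\alpha_\ell(D)=1$).
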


\begin{proof}
(a)
Since $D$ is a ribbon,
there are exactly two kinds of marked shifted tableaux of shape $D$ and weight $(n)$.
Thus, we have 
\[
\<\ \rib_{D},~ s_{(n)} \ra =
\left\<\sum_{\lambda \in OP(n)}c_{\lambda}p_{\lambda}, ~\sum_{\lambda \vdash n}z_{\lambda}^{-1} p_{\lambda} \right\ra
= \sum_{\lambda \in OP(n)}c_{\lambda} = 2 \, .
\]

(b) This is immediate from  Proposition~\ref{prop:ribbon}\,\eqref{eqn:ribbon} and  Proposition~\ref{prop:SchurQ}\,\eqref{eqn:q_to_p}.

(c)
Let $T$ be a marked shifted tableau of shape $D$ and weight $(n-1,1)$. We use $T(i, j)$ for the content filled in the box $(i, j)$ of $T$.
First, notice that if $T(i, j)$ is $2$ or $2'$,
then the box $(i,j)$ should be a corner of $D$.
Hence,
we can choose a box to place either $2$ or $2'$ among $c(D)$ corners.
Second,
if { $|T(i,j)| = 2$ then $T(i-1, j)$ can be either $1$ or $1'$.}
Lastly,
since $\alpha_\ell(D) > 1$ the leftmost box on the $\ell$th row of $D$ is not a corner 
and this box must be filled with either $1$ or $1'$.
In all,
we conclude that the number of  marked shifted tableaux of shape $D$ and weight $(n-1,1)$ is $8c(D)$,
and thus
\[
8c(D) = \< \rib_D, s_{(n)} + s_{(n-1,1)}\ra
= \< \rib_D, s_{(n)} \ra  + \< \rib_D,  s_{(n-1,1)}\ra  \, .
\]
Finally, using the following Murnaghan-Nakayama rule for $s_{(n-1,1)}$;
$$s_{(n-1,1)}=\sum_{\lambda \vdash n} z_{\lambda}^{-1}(m_1(\lambda)-1)p_{\lambda} ,$$
we deduce that
\[
  8c(D)  = \sum_{\la \in OP(n)}c_\la + \sum_{\lambda \in OP(n)} (m_1(\lambda)-1)\, c_\la \, = \,  \sum_{\lambda \in OP(n)} m_1(\lambda)\, c_\la \, .
\]

(d) This can be proved in a similar in (b).
But, in this case,
the unique box, say $b$, on the $\ell$th row of $D$ is a corner. 

Thus $b$ can be filled with $2$ or $2'$, and the number of such marked shifted tableaux of shape $D$ and weight $(n-1, 1)$ is 4.
This implies that
\[
\< \rib_D, ~s_{(n)}+s_{(n-1,1)} \ra = 8(c(D)-1)+4 = 8c(D) - 4\,.
\]
\end{proof}

The following corollary shows that
$\rib_D$ is not $p$-positive if $c(D)$ is sufficiently large.

\begin{cor}\label{cor:many_corners}
For a connected ribbon $D$ of size $n$,
if $c(D) > 1/2 +n/4$ then $\rib_D$ is not $p$-positive.
\end{cor}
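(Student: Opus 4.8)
The plan is to argue by contraposition: assuming $\rib_D$ is $p$-positive, I will deduce $c(D) \le 1/2 + n/4$. Throughout I use the standing convention that $\alpha_1(D) = 1$, which is precisely what makes Lemma~\ref{lem:corners}(c) and (d) available. Writing $\rib_D = \sum_{\la \in OP(n)} c_\la \, p_\la$, the hypothesis of $p$-positivity means that every coefficient satisfies $c_\la \ge 0$.

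The one substantive observation is completely elementary: for every $\la \in OP(n)$ one has $m_1(\la) \le n$, since the parts of $\la$ are positive integers summing to $n$ (with equality only for $\la = (1^n)$). Because all $c_\la \ge 0$, this pointwise bound may be summed against the coefficients to compare the $m_1$-weighted sum with the plain sum:
\[
\sum_{\la \in OP(n)} m_1(\la)\, c_\la \;\le\; n \sum_{\la \in OP(n)} c_\la \;=\; 2n,
\]
where the final equality is Lemma~\ref{lem:corners}(a).

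Next I split on the size of the last part $\alpha_\ell(D)$. If $\alpha_\ell(D) > 1$, then Lemma~\ref{lem:corners}(c) identifies the left-hand sum as $8c(D)$, giving $8c(D) \le 2n$, i.e.\ $c(D) \le n/4$. If instead $\alpha_\ell(D) = 1$, then Lemma~\ref{lem:corners}(d) identifies it as $8c(D) - 4$, giving $8c(D) - 4 \le 2n$, i.e.\ $c(D) \le 1/2 + n/4$. In both cases $c(D) \le 1/2 + n/4$, which is exactly the contrapositive of the assertion.

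I expect essentially no obstacle here: the entire argument is the crude bound $m_1(\la) \le n$ routed through the corner identities of Lemma~\ref{lem:corners}. The only point meriting a line of care is the two-case split on $\alpha_\ell(D)$, and in particular the additive constant $-4$ appearing in case (d): it is precisely this term that forces the weaker conclusion $c(D) \le 1/2 + n/4$ (rather than $c(D) \le n/4$ as in case (c)), and hence accounts for the extra $1/2$ in the statement of the corollary.
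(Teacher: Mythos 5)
Your proof is correct and follows essentially the same route as the paper's: both rest on Lemma~\ref{lem:corners}(a),(c),(d) together with the pointwise bound $m_1(\la) \le n$ and the nonnegativity of the coefficients $c_\la$ under $p$-positivity. The paper merely packages the two cases into the single inequality $\sum_{\la \in OP(n)}(n-m_1(\la))\,c_\la \le 2n - 8c(D) + 4$ and derives a contradiction, whereas you argue the contrapositive with an explicit case split on $\alpha_\ell(D)$ — a purely cosmetic difference.
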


\begin{proof}
By Lemma~\ref{lem:corners}, we have
$$\sum_{\lambda \in OP(n)}(n-m_1(\lambda)) \, c_{\lambda}\leq 2n- 8c(D) + 4  ,$$
which turns out to be negative if $c(D) > 1/2+n/4$.
Since  $m_1(\lambda)$ is at most $n$, the number $n-m_1(\lambda)$ is always nonnegative. Therefore, $c_{\lambda}<0$ for some $\lambda$.
\end{proof}

\begin{rem}
Let $D$ be a ribbon of the form
$D = {\alpha^{(k)} \bullet \cdots \bullet \alpha^{(2)} \bullet \alpha^{(1)} \bullet B}$ as in Conjecture~\ref{necessary}. 
Since we assume that $\alpha_1(D) = 1$,
$B$ should be a basic block satisfying that $\alpha_{1}(B) =1$. 
Let $\ell$ be the length of $B$.
If $\alpha_\ell(B) \neq 1$, then
the number of corners in $\alpha^{(i)} \bullet \alpha^{(i-1)} \bullet \cdots \bullet \alpha^{(1)} \bullet B$ 
is twice the number of corners in  $\alpha^{(i-1)} \bullet \cdots \bullet \alpha^{(1)} \bullet B$ for $1 \leq i \leq k$.
It follows that $c(D) = 2^k$ from $c(B) = 1$.
On the other hand, if $\alpha_\ell(B) = 1$, that is, $B$ is a one-row diagram, 
then $\alpha^{(1)} \bullet B = \triangle_{2n, n}$ or $\triangle_{2n, n+1}$,
and thus $c(\alpha^{(1)} \bullet B) = c(B) =1$. 
Hence, $c(D) = 2^{k-1}$.
Notice that 
$c(D) \leq 2^{k-1}$ if $2 \leq |B| \leq 3$ as $\alpha_\ell(B) = 1$
and $c(D) \leq 2^k$ otherwise.
In both cases, 
we can derive that $c(D) \leq |D|/4 < 1/2 + |D|/4$.
\end{rem}

According to Conjecture~\ref{necessary}, if the number of boxes $n=|D|$ of the ribbon is \emph{odd} then $\rib_D$ is $p$-positive only when $D$ is a basic block.
We prove that $\rib_D$ is not $p$-positive if $D\not\in \mathfrak{B}$ satisfies certain conditions, when $|D|$ is odd. 

When the length of the ribbon is even, non $p$-positivity of $\rib_D$ immediately follows from Lemma~\ref{lem:corners}.

\begin{thm}
Let $D\not\in \mathfrak{B}$ be a ribbon with odd number of boxes.
If the length of $D$ is even, then $\rib_D$ is not $p$-positive.
\end{thm}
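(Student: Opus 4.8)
The plan is to read off the coefficient of $p_{(n)}$ directly from Lemma~\ref{lem:corners}(b) and to observe that it is negative under the stated hypotheses. Write $n = |D|$ and let $\ell$ denote the length of $D$, and expand $\rib_D = \sum_{\la \in OP(n)} c_\la\, p_\la$. Since $n$ is odd, the one-part partition $(n)$ belongs to $OP(n)$, and Lemma~\ref{lem:corners}(b) evaluates its coefficient as $c_{(n)} = (-1)^{\ell+1}\,2/n$. This is the only input the argument requires.

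The single computation that remains is to track the sign. Because $\ell$ is even, $\ell+1$ is odd, so $(-1)^{\ell+1} = -1$ and hence $c_{(n)} = -2/n$. As $n \geq 1$, this coefficient is strictly negative. A $p$-positive symmetric function has, by definition, all coefficients in its $p$-expansion nonnegative; the presence of the negative coefficient $c_{(n)}$ therefore shows at once that $\rib_D$ is not $p$-positive.

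There is essentially no obstacle here: the theorem is an immediate corollary of Lemma~\ref{lem:corners}(b), which is precisely the reason for having recorded that identity. The only remark worth making is that the hypothesis $D \notin \mathfrak{B}$ is not actually used in the proof, and indeed it is automatically satisfied in this regime. Every basic block of odd size $n$ has odd length: the one-row and one-column blocks $\triangle_{n,1}$ and $\triangle_{n,1}^t = \triangle_{n,n}$ have lengths $1$ and $n$, while $\triangle_{n,3}$ and its transpose, antipodal rotation, and rotated transpose have lengths $3$ and $n-2$, all of which are odd when $n$ is odd. Hence no even-length ribbon of odd size can be a basic block, so the even-length hypothesis already forces $D \notin \mathfrak{B}$, and the conclusion follows purely from the sign of $c_{(n)}$.
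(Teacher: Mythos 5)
Your proof is correct and is exactly the paper's argument: the paper derives this theorem immediately from Lemma~\ref{lem:corners}(b), reading off $c_{(n)} = (-1)^{\ell+1}\,2/n < 0$ when $n$ is odd and $\ell$ is even. Your additional observation that the hypothesis $D \notin \mathfrak{B}$ is automatic here (every basic block of odd size has odd length) is also correct and a nice clarification of why that hypothesis plays no role in this case.
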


For the ribbons of odd length, more work has to be done. We set up some necessary notions first.
For a given ribbon $D$ of length $\ell$,
we define the {\it head length} of $D$ by the smallest index $i$
such that the $i$th row of $D$ contains at least two boxes. We define the \emph{tail length} of a ribbon according to the number of boxes in the last row:  
If $\alpha_{\ell}(D)=1$, then we define the \emph{tail length} of $D$ by the smallest index $j$
such that $(\ell - j +1)$st row of $D$ contains at least two boxes.
Otherwise
we define the \emph{tail length} of $D$ by $\alpha_\ell(D)$.

\begin{example}
If $D_1 = (1,3,1,1,1)$, then the head length and the tail length of $D_1$ are 2 and 4, respectively. 
On the other hand, if $D_2 = (1,3,1,1,2)$, 
then the head length and the tail length of $D_2$ are 2 and 2, respectively. \\
\begin{center}
$D_1=
\small{
\begin{ytableau}
\none&\none&\none& *(pink!50)\\
\none& *(gray!50) &~& *(pink!50)\\
\none&*(gray!50)\\
\none&*(gray!50)\\
\none&*(gray!50)
\end{ytableau}
}$\qquad \qquad \qquad
$D_2=
\small{
\begin{ytableau}
\none&\none&\none&  *(pink!50)\\
\none&~&~&  *(pink!50)\\
\none& ~\\
\none& ~\\
*(gray!50)&*(gray!50)
\end{ytableau}
}$
\end{center}
\end{example}

\begin{thm}
Let $D=(\alpha_1(D),\alpha_2(D), \dots, \alpha_{\ell}(D))\not\in \mathfrak{B}$ be a ribbon with  an odd number of boxes such that $\alpha_1 (D) =1$.
If the length of $D$ is odd, then  $\rib_D$ is not $p$-positive in the following cases, where $k$ and $m$ are the head length and the tail length of $D$, respectively.
\begin{enumerate}
    \item[(a)]   $\alpha_\ell (D) >1$ and $k$ is even.
    \item[(b)]   $\alpha_\ell (D) >1$, $k$ is odd and $m$ is even.
    \item[(c)]   $\alpha_\ell (D) =1$ and $k=m$.
\end{enumerate}
\end{thm}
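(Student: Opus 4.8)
The plan is to establish non-$p$-positivity in each of (a), (b), (c) by producing, for the given $D$, a single partition $\la\in OP(n)$ whose coefficient in $\rib_D=\sum_{\la\in OP(n)}c_\la\,p_\la$ is strictly negative; since $p$-positivity means every $c_\la\ge 0$, it is enough to show $\<p_\la,\rib_D\ra<0$. Recall that the companion statement for \emph{even} length is immediate from Lemma~\ref{lem:corners}(b) (there $c_{(n)}=-2/n<0$); for \emph{odd} $\ell$ that partition fails, so a sharper choice of $\la$ is needed, and the whole point of the hypotheses is to make such a choice available.

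First I would fix the computational engine. Combining the coarsening expansion \eqref{eqn:ribbon} with \eqref{eqn:q_to_p} and Lemma~\ref{lem:simple} gives, for every $\la\in OP(n)$,
\[
\<p_\la,\rib_D\ra=(-1)^{\ell}\sum_{\gamma\succcurlyeq\alpha(D)}(-1)^{\ell(\gamma)}\<p_\la,q_{\la(\gamma)}\ra,
\qquad
\<p_\la,q_\mu\ra=2^{\ell(\la)}\,N(\la,\mu),
\]
where $N(\la,\mu)$ counts the ways to distribute the (labelled) parts of $\la$ into an ordered tuple of blocks whose $i$th block sums to $\mu_i$. Lemma~\ref{lem:simple} is exactly the assertion that $N(\la,\mu)=0$ unless $\la$ refines $\mu$, so only finitely many coarsenings $\gamma$ contribute and the inner product becomes a finite signed count.

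The test partitions are the hooks $\la=(n-j,1^{j})$. Since $n$ is odd, $\la\in OP(n)$ forces $j$ even, and this is where the parity hypotheses are used: I take $j=k$ in case (a), $j=m$ in case (b), and $j=k+m-2=2(k-1)$ in case (c), each of which is even under the stated assumptions (note $\alpha_\ell>1$ gives $m=\alpha_\ell$, and $k=m$ in (c)). For such a hook, $N(\la,\la(\gamma))\ne0$ only when $\la(\gamma)$ has a part of size $\ge n-j$ to host the long part $n-j$; in the main range $2j<n$ this part is unique, so the sum collapses to coarsenings $\gamma$ that carry exactly one ``large'' block. Grouping these by the position of that block converts $\<p_\la,\rib_D\ra$ into a signed enumeration of the ways to cut the complementary ``small'' region, whose total weight involves only the $j$ cells at the relevant end(s) of $D$: the head of length $k$ in (a), the thick tail in (b), and both thin ends together in (c) (which is why its exponent is $k+m-2$). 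The handful of degenerate cases with $2j\ge n$ force $D$ to be nearly a single row or column, and these reduce to $D\in\mathfrak B$ or are checked directly.

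The heart of the argument is to evaluate this signed count. I expect the coarsenings of the \emph{interior} of $D$ (the rows away from the chosen end) to cancel in pairs under the involution toggling one interior cut, so that the whole alternating sum telescopes to a boundary term depending only on how the $j$ head/tail cells split off. The parity of $j$ then fixes the sign of this boundary term, and together with the global factor $(-1)^{\ell}$ (with $\ell$ odd) it yields $\<p_\la,\rib_D\ra<0$; here the hypothesis $D\notin\mathfrak B$ is precisely what keeps the boundary term from vanishing, since for a basic block the analogous computation returns $0$, consistent with the $c_\la$ recorded in Proposition~\ref{Q_basic}. The main obstacle is exactly this cancellation-and-boundary analysis: the body of $D$ is arbitrary, so the difficulty is to prove \emph{rigorously} that only the bounded head/tail data survive and govern the sign, rather than verifying it case by case; controlling the pairing of coarsenings (and the edge effects where the large block abuts the chosen end) is where the real work lies.
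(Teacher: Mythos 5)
Your computational engine is sound and is in fact the same as the paper's: expand $\rib_D$ over coarsenings via \eqref{eqn:ribbon}, kill non-refinements with Lemma~\ref{lem:simple}, and use $\langle p_\la, q_\mu\rangle = 2^{\ell(\la)}N(\la,\mu)$. But the paper does \emph{not} use hooks; it tests three-part odd partitions such as $\la(n-k,k-1,1)$ and $\la(n-m,m-1,1)$, and this difference is fatal to your plan. Your hooks $(n-j,1^{j})$ fail in general. Concretely, take $D=(1,1,1,2,2,2,2)$: then $n=11$ is odd, $\ell=7$ is odd, $\alpha_1(D)=1$, $\alpha_\ell(D)=2>1$, the head length is $k=4$ (even), and $D\notin\mathfrak{B}$, so this is an instance of case (a). Your test partition is $\la=(7,1,1,1,1)$. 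The coarsenings of $D$ that contribute are exactly those with a part $\geq 7$, namely $(11)$, $(1,10)$, $(1,1,9)$, $(2,9)$, $(1,1,1,8)$, $(2,1,8)$, $(1,2,8)$, $(3,8)$, $(9,2)$, $(1,8,2)$, $(1,1,7,2)$, $(2,7,2)$, $(7,2,2)$, $(7,4)$; summing $(-1)^{\ell(D)+\ell(\gamma)}\,2^{5}\,N(\la,\la(\gamma))$ over these gives
\[
\langle p_\la,\rib_D\rangle \;=\; 32-128+384-192-768+384+384-128-192+384-384+192+192-32\;=\;+128\,,
\]
so this coefficient is strictly \emph{positive} and cannot witness non-$p$-positivity. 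By contrast, the paper's choice $\la=(7,3,1)$ picks out only the coarsenings $(11),(1,10),(3,8),(7,4)$ and yields $\langle p_\la,\rib_D\rangle = 2^{3}(1-1-1-1)=-16<0$, which is what actually proves the theorem for this ribbon.

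The counterexample also pinpoints why your heuristic breaks down: when $\la$ has many unit parts, the counts $N(\la,\la(\gamma))$ carry binomial and factorial factors ($\binom{4}{2}$, $3!$, etc.) that vary from coarsening to coarsening, so the hoped-for pairwise cancellation of ``interior cuts'' does not occur and the alternating sum does not telescope to a sign-definite boundary term. This is precisely the reason the paper works with partitions having only three parts: then every contributing coarsening contributes $\pm 2^{3}$ times a multiplicity of at most $2$, only the handful of coarsenings whose parts are groupings of those three parts survive, and the resulting finite case analysis (over where the cuts near the head and tail can fall) can be carried out explicitly. Independently of the counterexample, note that your writeup itself defers the decisive step (``controlling the pairing of coarsenings\dots is where the real work lies''), so even on its own terms the proposal is a plan rather than a proof --- and, as formulated with hooks, the plan's key claim is false.
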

\begin{proof}
We let $n$ be the number of boxes of $D$, that is assumed to be odd. 
Without loss of generality, we may assume that $m \geq k$. Let $\rib_D = \sum_{\la \in OP(n)} c_\la p_\la$. We use  Proposition~\ref{prop:SchurQ}\,\eqref{eqn:q_to_p}, Proposition~\ref{prop:ribbon} \eqref{eqn:ribbon} and Lemma~\ref{lem:simple} in the proofs that follow.

(a) To prove the assertion,
we show that the coefficient $c_\la$ is negative, where $\la = \la(n-k, k-1, 1)$. Note that $m=\alpha_\ell(D)>1$ in this case.

We first suppose that $k = 2$ so that $\la=(n-2,1,1)$. Then we have
\[
z_{\la}c_{\la}=
\begin{cases}
\<p_{\la},\,q_n- q_{n-1}q_1 - q_{n-2}q_2\ra =-16  & \text{if}\ m=2,\\[2ex]
\<p_{\la},\,q_n- q_{n-1}q_1 \ra =-8 & \text{if}\ m>3\,.
\end{cases}
\]

Now suppose that $k$ is even such that $k > 2$. Note that $n-k$, $k-1$ and $1$ are mutually distinct due to the assumptions we make.  Then
\[
z_{\la}c_{\la}=
\begin{cases}
\<p_{\la},\,q_n- q_{n-1}q_1 - q_{n-k+1} q_{k-1} - q_{n-k} q_{k}\ra =-16  & \text{if}\ m=k,\\[2ex]
\<p_{\la},\,q_n- q_{n-1}q_1 - q_{n-k+1} q_{k-1} \ra =-8 & \text{if} \ m>k \,.
\end{cases}
\]

(b) Now we consider odd $k$ and even $m=\alpha_\ell(D)>1$ and we trace the coefficient $c_\la$ with $\la = \la(n-m, m-1,1)$. Note that $m-1\neq 1$ Since $\alpha(D)=1$ implies $k> 1$, while it is possible that $n-m=m-1$.
\begin{itemize}
  \item
  If there is an index $i$ such that $\alpha_1(D) + \cdots + \alpha_{i-1}(D) = m-1$ and $\alpha_i(D) = 1$, then 
  \[
  z_{\la}c_{\la}=\<p_{\la},\,q_n- q_{n-1}q_1 - q_{n-m+1}q_{m-1} - 2 q_{n-m} q_m + 2q_{n-m} q_{m-1} q_1 \ra= \begin{cases}
    -16 & \text{if} \ n-m = m-1, \\[2ex]
    -8 & \text{otherwise}\,.
  \end{cases}\,.
  \]
  \item
   If there is an index $i$ such that $\alpha_1(D) + \cdots + \alpha_{i-1}(D) = m-1$ and $\alpha_i(D) > 1$, then 
  \[
  z_{\la}c_{\la}=\<p_{\la},\,q_n- q_{n-1} q_1 - q_{n-m+1} q_{m-1} - q_{n-m}q_m \ra= 
  \begin{cases}
    -32 & \text{if} \ n-m = m-1, \\[2ex]
    -16 & \text{otherwise}\,.
  \end{cases}
  \]
  \item
  If there is an index $i$ such that $\alpha_1(D) + \cdots + \alpha_{i}(D)= m$ 
  %{\color{teal} This is correct. It is the case with no index $j$ such that $\alpha_1(D) + \cdots + \alpha_{j}(D)=m-1$}
  and $\alpha_i(D)>1$, then
  \[
  z_{\la}c_{\la}=\<p_{\la},\,q_n- q_{n-1}q_1 - 2q_{n-m} q_m + q_{n-m}q_{m-1} q_1\ra= \begin{cases}
    -16 & \text{if} \ n-m = m-1, \\[2ex]
    -8 & \text{otherwise}\,.
  \end{cases}\,.
  \]
  \item
  If there is no index $i$ such that $\alpha_1(D) + \cdots + \alpha_{i}(D) = m-1$ or $m$, then
  \[
  z_{\la}c_{\la}=\<p_{\la},\,q_n- q_{n-1} q_1- q_{n-m} q_m\ra=
  \begin{cases}
    -16 & \text{if} \ n-m = m-1, \\[2ex]
    -8 & \text{otherwise}\,.
  \end{cases}
  \]
 \end{itemize}

(c)
This can be proven in a similar way as in (b)
by tracing the coefficient $c_\la$,
where $\la = \la(n-m, m-1, 1)$ if $k = m$ is even
and $\la = \la(n-m-1, m, 1)$ otherwise.
\end{proof}

%==================================================================================
\vspace{3mm}

\section{Concluding Remarks}\label{sec:remarks} 
Conjecture~\ref{necessary} is about the $p$-positive \emph{connected} ribbons. For a ribbon $D$ having $r$ connected components, say $D=D_1\cup \cdots \cup D_r$, the ribbon Schur $Q$-function of $D$ is decomposed as $\rib_D=\prod_{i=1}^r \rib_{D_i}$. Since a product of $p$-positive symmetric functions is $p$-positive, we know that if $D$ is a ribbon having $p$-positive ribbons as its connected components then $\rib_D$ is $p$-positive. However, the converse is not true in general; that is the $p$-positivity of $\prod_{i=1}^r \rib_{D_i}$ does not guarantee the $p$-positivity of each $\rib_{D_i}$, and the classification of all $p$-positive ribbons needs more sophisticated work. We, nevertheless believe and make a conjecture, which we checked with Sage for some specific cases.
\begin{conj} Let $D=D_1\cup \cdots \cup D_r$ be a ribbon with connected components $D_i$, $i=1, \dots, r$. Then, 
$\rib_D$ is $p$-positive if and only if $\rib_{D_i}$ is $p$-positive for all $i$.
\end{conj}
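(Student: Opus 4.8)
The forward implication is already settled in the remark preceding the conjecture: a product of $p$-positive functions is $p$-positive. So the entire content lies in the converse, and the plan is to prove that if $\rib_D=\prod_{i=1}^r\rib_{D_i}$ is $p$-positive then each factor $\rib_{D_i}$ is $p$-positive. By peeling off one component at a time it suffices to treat the two-factor situation: write $f=\rib_{D_1}$ and $g=\prod_{i\ge 2}\rib_{D_i}$ (or, in the base step, a single connected ribbon $\rib_{D_2}$) of sizes $m$ and $n$, and show that $p$-positivity of $fg$ forces $p$-positivity of both $f$ and $g$, after which one recurses on $g$. Throughout I would use the identification $\Gamma\cong\mathbb{Q}[p_1,p_3,p_5,\dots]$ from Theorem~\ref{thm:gamma}, under which $p$-positivity of an element is exactly nonnegativity of all coefficients of the corresponding polynomial in the weighted monomials $p_\la$, $\la\in OP$. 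The essential warning to keep in mind is that for arbitrary polynomials ``the product has nonnegative coefficients'' does \emph{not} imply ``each factor does'' (for instance $(u^2-u+1)(u+1)=u^3+1$); hence any proof must genuinely exploit the rigid structure of ribbon Schur $Q$-functions recorded in Proposition~\ref{prop:ribbon} and Lemma~\ref{lem:corners}.

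The strategy I would pursue is an extremal-coefficient argument. Suppose, for contradiction, that not both factors are $p$-positive, and write $f=\sum_{\la\in OP(m)}a_\la p_\la$ and $g=\sum_{\mu\in OP(n)}b_\mu p_\mu$; the coefficient of $p_\nu$ in $fg$ is then $\sum a_\la b_\mu$, summed over all pairs with $\la\sqcup\mu=\nu$ as multisets. I would fix a linear order $\prec$ on odd partitions refining dominance -- ordering first by the largest part, then lexicographically on the remaining parts -- and let $\rho^{*}$ be the $\prec$-\emph{largest} partition carrying a negative coefficient in either $f$ or $g$, say $a_{\rho^{*}}<0$ after relabelling. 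By maximality, every monomial $\prec$-above $\rho^{*}$ has a nonnegative coefficient in \emph{both} factors. The aim is then to produce a single partition $\nu^{*}$ of $m+n$ whose decompositions $\la\sqcup\mu=\nu^{*}$ consist of one target pair $(\rho^{*},\mu_0)$ with $b_{\mu_0}>0$ of known sign, together with pairs whose $f$-part is $\prec$-above $\rho^{*}$ (hence has $a_\la\ge 0$) matched against a $g$-part that is $\prec$-above $\rho^{*}$ (hence has $b_\mu\ge 0$); the coefficient of $p_{\nu^{*}}$ in $fg$ would then inherit the sign of $a_{\rho^{*}}$ and be negative, a contradiction. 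Lemma~\ref{lem:corners}(b) supplies the positive anchor $\mu_0$: when $n$ is odd the single-part monomial $p_{(n)}$ occurs in $g$, and by Proposition~\ref{prop:ribbon}\,\eqref{eqn:ribbon} together with Lemma~\ref{lem:simple} a part of size $n$ cannot be assembled from smaller odd parts, so it strongly constrains the admissible decompositions.

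The principal obstacle is controlling \emph{cancellation}, and it concentrates in two places. First, the isolating partition $\nu^{*}$ generally admits many decompositions $\la\sqcup\mu=\nu^{*}$: even the anchor part $n$ need not be confined to $\mu$ when $m\ge n$, and I must show that every decomposition other than the intended one either places a $\prec$-larger partition in $f$ matched against a $\prec$-larger partition in $g$, or vanishes because one of the two coefficients is zero. Pinning down the supports of $f$ and $g$ through \eqref{eqn:ribbon} and Lemma~\ref{lem:simple} is exactly what should render this bookkeeping finite and tractable, but verifying it uniformly is delicate. Second, and more seriously, when the relevant component size is \emph{even} the anchor $p_{(n)}$ is absent, so the ``unsplittable large part'' device must be replaced; here I would anchor instead on the $\prec$-largest odd part actually occurring, and use Lemma~\ref{lem:corners}(a) (the total $\sum_\mu b_\mu=2>0$) to argue that the factor still carries enough positive mass to preserve the sign of $a_{\rho^{*}}$ after summation. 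Making this robust across all parity combinations of $m$ and $n$, and then running the induction on $r$, is where I expect the real difficulty to lie.

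As a consistency check and a source of base data, I would reconcile the approach with the established special cases: the earlier theorem that a ribbon of odd size and even length is never $p$-positive already removes many candidate components, and the computations up to $n=12$ can be used both to test candidate choices of $\nu^{*}$ and to calibrate $\prec$ so that the extremal negative coefficient is always reachable by some admissible $\nu^{*}$. Should the extremal-monomial bookkeeping prove intractable in the even-size case, a worthwhile alternative is to study the cone of $p$-positive elements of $\Gamma$ directly under the product rule \eqref{eqn:ribbonop}, with the goal of showing that it is divisor-closed along the multiplicative monoid generated by connected ribbon Schur $Q$-functions; this would yield the converse in one stroke and bypass the case analysis entirely.
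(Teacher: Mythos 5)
This statement is a \emph{conjecture} in the paper: the authors prove only the easy forward direction (a product of $p$-positive functions is $p$-positive, as noted in the remark you cite) and report Sage verification for specific cases; no proof of the converse exists in the paper. Your proposal does not close that gap. What you have written is a research plan whose decisive steps are explicitly deferred: you yourself flag that controlling cancellation among the decompositions $\la\sqcup\mu=\nu^{*}$ is ``delicate,'' and that the even-size case and the induction on $r$ are ``where I expect the real difficulty to lie.'' The core logical problem is that your maximality device only constrains coefficients $\prec$-above $\rho^{*}$. A decomposition $\nu^{*}=\la\sqcup\mu$ in which \emph{both} $\la$ and $\mu$ lie $\prec$-below $\rho^{*}$ can pair two negative coefficients, contributing a \emph{positive} term $a_\la b_\mu$ that may cancel the intended negative term $a_{\rho^{*}}b_{\mu_0}$; nothing in the proposal rules this out, and this is precisely the failure mode of your own counterexample $(u^2-u+1)(u+1)=u^3+1$. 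Since the whole point of the conjecture is that ribbon Schur $Q$-functions must have extra structure preventing such cancellation, an argument that does not pin down that structure has not engaged the actual difficulty.

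There are also two concrete technical errors in the scaffolding. First, the ``positive anchor'' is not positive in general: Lemma~\ref{lem:corners}(b) gives $c_{(n)}=(-1)^{\ell+1}\,2/n$, which is \emph{negative} whenever the component has even length, so anchoring on $p_{(n)}$ with a sign-definite coefficient fails for exactly half the cases (and, by the paper's theorem on odd size and even length, those components are themselves never $p$-positive, which tangles the induction further). Second, Lemma~\ref{lem:corners} is stated for \emph{connected} ribbons, but in your peeling scheme the factor $g=\prod_{i\ge 2}\rib_{D_i}$ is a product, not a connected ribbon, so neither part (a) nor part (b) of that lemma applies to $g$ beyond the base step; the inductive step would need independent control of the coefficients of such products, which is again the unproven content of the conjecture itself. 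The fallback for even $n$ --- using $\sum_\mu b_\mu=2>0$ --- is too weak: total positive mass says nothing about the sign of the coefficient at the one monomial $\nu^{*}$ you need. In short, the proposal identifies reasonable tools (Proposition~\ref{prop:ribbon}\,\eqref{eqn:ribbon}, Lemma~\ref{lem:simple}, Lemma~\ref{lem:corners}) but does not assemble them into a proof, and the statement remains open.
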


%==================================================================================
\vspace{3mm}
\section*{Acknowledgements}\label{sec:acknow} 
The  authors  would  like  to  thank  Stephanie van Willigenburg for  helpful  conversations, Sogang University and Korea Institute for Advanced Study where some of the research took place.  

%=================================================================================
\vspace{3mm}
\bibliographystyle{plain}  
\bibliography{mybib} 
\end{document}